%------------------------------------------------------------------------------
%   Higher Koszul brackets on the cotangent complex
%------------------------------------------------------------------------------
%
%

\documentclass[twoside]{amsart}

\numberwithin{equation}{section}
\usepackage{amsmath,amssymb,amsfonts,amsthm,latexsym}
\usepackage{graphicx,color,enumerate}
\usepackage[margin=1in]{geometry}
\usepackage{longtable}
\usepackage{appendix}
%******HC's packages ******
\usepackage{ stmaryrd }
\usepackage{mleftright}
\usepackage{hyperref}

\setlength{\tabcolsep}{2pt}

\newtheorem{theorem}{Theorem}[section]
\newtheorem{lemma}[theorem]{Lemma}
\newtheorem{proposition}[theorem]{Proposition}
\newtheorem{corollary}[theorem]{Corollary}

\theoremstyle{definition}
\newtheorem{definition}[theorem]{Definition}

\theoremstyle{remark}

\newcommand{\C}{\mathbb{C}}

\newcommand{\R}{\mathbb{R}}

\newcommand{\Z}{\mathbb{Z}}

\newcommand{\git}{/\!\!/}
\newcommand{\bs}{\boldsymbol}

%***** HC stuff ******************
\newcommand{\dR}{\mathrm{d}}
\newcommand{\dP}{\delta_{\operatorname{Poiss}}}
\newcommand{\Spec}{\operatorname{Spec}}
\newcommand{\I}{\operatorname{\sqrt{-1}}}

\newcommand{\fd}{\operatorname{fd}}

\newcommand{\Hom}{\operatorname{Hom}}

\newcommand{\UnSh}{\operatorname{UnSh}}
\newcommand{\Sym}{\operatorname{S}}
%***** HC stuff ******************

\begin{document}

\title{Higher Koszul brackets on the cotangent complex}

\author[H.-C.~Herbig]{Hans-Christian Herbig}
\address{Departamento de Matem\'{a}tica Aplicada,
Av. Athos da Silveira Ramos 149, Centro de Tecnologia - Bloco C, CEP: 21941-909 - Rio de Janeiro, Brazil}
\email{herbighc@gmail.com}

\author[D.~Herden]{Daniel Herden}
\address{Department of Mathematics, Baylor University,
Sid Richardson Building,
1410 S.4th Street,,
Waco, TX 76706, USA}
\email{Daniel\_Herden@baylor.edu}

\author[C.~Seaton]{Christopher Seaton}
\address{Department of Mathematics and Computer Science,
Rhodes College, 2000 N. Parkway, Memphis, TN 38112}
\email{seatonc@rhodes.edu}

\thanks{C.S. was supported by the E.C.~Ellett Professorship in Mathematics;
H.-C.H. was supported by CNPq through the \emph{Plataforma Integrada Carlos Chagas.}}

\keywords{Poisson algebras, cotangent complex, $L_\infty$-algebroids, Poisson cohomology, invariant theory}
\subjclass[2010]{primary 17B63,	secondary 13D02, 58A50, 17B66}

\begin{abstract}
Let $n\ge 1$ and $A$ be a commutative algebra of the form $\bs k[x_1,x_2,\dots, x_n]/I$ where $\bs k$ is a field of characteristic $0$ and $I\subseteq \bs  k[x_1,x_2,\dots, x_n]$ is an ideal.
Assume that there is a Poisson bracket $\{\:,\:\}$ on $S$ such that $\{I,S\}\subseteq I$ and let us denote the induced bracket on $A$ by $\{\:,\:\}$ as well. It is well-known that $[\dR x_i,\dR x_j]:=\dR\{x_i,x_j\}$ defines a Lie bracket on the $A$-module $\Omega_{A|\bs k}$ of K\"ahler differentials making  $(A,\Omega_{A|\bs k})$ a Lie-Rinehart pair. Recall that $A$ is regular if and only if $\Omega_{A|\bs k}$ is projective as an $A$-module. If $A$ is not regular, the cotangent complex $\mathbb L_{A|\bs k}$  may serve as a replacement for the $A$-module $\Omega_{A|\bs k}$. We prove that there is a structure of an $L_\infty$-algebroid on $\mathbb L_{A|\bs k}$, compatible with the Lie-Rinehart pair $(A,\Omega_{A|\bs k})$. The $L_\infty$-algebroid on $\mathbb L_{A|\bs k}$ actually comes from a $P_\infty$-algebra structure on the resolvent of the morphism $\bs k[x_1,x_2,\dots, x_n]\to A$. We identify examples when this $L_\infty$-algebroid simplifies to a dg Lie algebroid.
 For aesthetic reasons we concentrate on cases when  $ \bs k[x_1,x_2,\dots, x_n]$ carries a (possibly nonstandard) $\mathbb Z_{\ge 0}$-grading and both $I$ and $\{\:,\:\}$ are homogeneous.
\end{abstract}

\maketitle
\tableofcontents

% 情況是逐漸了解的，需要繼續不斷的努力。認識世界，不是一件容易的事。
% 毛澤東

% xxxxxxxxxxxxxxxxxxxxxxxxxxxxxxxxxxxxxxxxxxxxxxxxxxxxxxxxxxxxxxxxxxxxxxxxx
% xxxxxxxxxxxxxxxxxxxxxxxxxxxxxxxxxxxxxxxxxxxxxxxxxxxxxxxxxxxxxxxxxxxxxxxxx
% xxxxxxxxxxxxxxxxxxxxxxxxxxxxxxxxxxxxxxxxxxxxxxxxxxxxxxxxxxxxxxxxxxxxxxxxx

\section{Introduction}\label{sec:intro}

Let $\bs k$ be a field of characteristic $0$. A unital commutative associative $\bs k$-algebra $A$ is called a \emph{Poisson algebra} if it is endowed with a Lie bracket $\{\:,\:\}:A\times A\to A$ such that  $\{a,bc\}=b\{a,c\}+\{a,b\}c$ for all $a,b,c\in A$. The bracket $\{\:,\:\}$ is referred to as the \emph{Poisson bracket}. If $S$ is a Poisson algebra, an ideal $I\subseteq S$ with respect to the multiplication is called a \emph{Poisson ideal} if $\{I,S\}\subseteq I$. If $I$ is a Poisson ideal, the Poisson bracket descends to the quotient algebra $S/I$. In this paper we study Poisson algebras of the form $A=S/I$ where $I$ is a finitely generated Poisson ideal in $S$. Throughout the paper we focus on the case when $S$ is a polynomial algebra $\bs k[x_1,x_2,\dots,x_n]$.  However, many results are also valid for other Poisson algebras, e.g.,
algebras of regular functions on a regular Poisson varieties or algebras of smooth functions on Poisson manifolds. For aesthetic reasons, we mainly investigate examples where there is a nonstandard $\Z_{\ge 0}$-grading on $S$.
This means that to each of the variables $x_i$ is attached an \emph{internal degree} $\deg(x_i)\ge 1$ such that the ideal $I$ is generated by elements that are homogeneous with respect to the internal degree. The Poisson bracket on $S$ is uniquely determined by the brackets between the coordinates, denoted
\begin{align}\label{eq:Lambda}
\{x_i,x_j\}=:\Lambda_{ij}\in S.
\end{align}
We use this notation throughout the paper. We typically assume that the bracket respects the internal degree so that $\deg(\Lambda_{ij})=\deg(\{\:,\:\})+\deg(x_i)+\deg(x_j)$.

An element $f\in S$ is called a \emph{Casimir} if $\{f,\:\}$ acts trivially on $S$. The set $\mathrm H^0_{\mathrm{Poiss}}(S)$ of Casimirs in $S$ is called the \emph{Poisson center} of $S$. The reason for the notation is that it can be identified with the zeroth Poisson cohomology of $S$ (cf. Appendix \ref{ap:Poissoncohomology}).
We say that the ideal $I\subseteq S$ is \emph{generated by Casimirs}  if there exist generators $f_1,\dots, f_k$ for $I$ that are Casimirs. More generally,  an ideal $I$ generated by $f_1,\dots, f_k\in S$ is Poisson if and only if there exist $Z_{i\mu}^\nu\in S$ with $i\in\{1,2,\dots,n\}$ and $\mu, \nu\in \{1,2,\dots,k\}$ such that
\begin{align}\label{eq:theZs}
\{x_i,f_\mu\}=\sum_\nu Z_{i\mu}^\nu f_\nu.
\end{align}
The constructions in this paper assume a fixed choice of such a tensor $Z_{i\mu}^\nu\in S$. If $f_1,\dots, f_k\in S$ form a complete intersection, then $Z_{i\mu}^\nu$ is unique up to $I$ (cf. Section \ref{sec:ci}). If the variables have internal degree $\deg(x_i)\ge 1$, the $f_\mu$'s are homogeneous, and the bracket respects the internal degree, then the $Z_{i\mu}^\nu$'s  should be chosen such that
$\deg(Z_{i\mu}^\nu)=\deg(x_i)+\deg(f_\mu)+\deg(\{\:,\:\})-\deg(f_\nu)$.

If $A$ is a Poisson algebra over $\bs k$, then there is a Lie bracket, the so-called \emph{Koszul bracket} (cf. \cite{Huebschmann}), on the module of K\"ahler differentials $\Omega_{A|\bs k}$ given by the formula
\begin{align} \label{eq:Koszulbr}
[a_1\dR a_2,b_1\dR b_2]:=a_1\{a_2,b_1\}\dR b_2+b_1\{a_1,b_2\}\dR a_2+a_1b_1\dR\{a_2,b_2\}
\end{align}
for $a_1,a_2,b_1,b_2\in A$. If $\Spec(A)$ is smooth, then $\Omega_{A|\bs k}$ with this bracket  forms a Lie algebroid over $\Spec(A)$ in the following sense.

\begin{definition}\label{def:liealgebroid} A \emph{Lie algebroid over $\Spec(A)$} is a projective $A$-module $L$ together with a Lie bracket $[\:,\:]$ and an $A$-linear morphism of Lie algebras $\rho:L\to D_A$, $X\mapsto \rho_X$, to the module of derivations $D_A:=\mathrm{Der}(A,A)$ such that  $[X,aY]=a[X,Y]+(\rho_X a)Y$ for $a\in A$ and $X,Y\in L$. The morphism $\rho$ is referred to as the \emph{anchor}.
\end{definition}
If we drop the assumption that $L$ is projective, we say that $(A,L)$ forms a \emph{Lie-Rinehart pair} over $\bs k$ (see \cite{Rinehart, Huebschmann}). If $A$ is a Poisson algebra, then $(A,\Omega_{A|\bs k})$ forms a Lie-Rinehart pair with anchor  $\rho_{a\dR b}(c)=a\{b,c\}$ for $a,b,c\in A$ \cite{Huebschmann}. Whenever $\Spec(A)$ is non-smooth the module of K\"ahler differentials $\Omega_{A|\bs k}$ is \emph{non-projective} (see \cite{AvramovHerzog}), which makes its homological algebra more intricate. The objective of this paper is to lift the Koszul bracket to the cotangent complex $\mathbb L_{A|\bs k}$ in the form of an $L_\infty$-algebroid over $\Spec(A)$ (for details on the cotangent complex see Section \ref{sec:cotangent}).

The principal tool to achieve this is Theorem \ref{thm:homotopyPoisson} below, which provides a $P_\infty$-algebra  structure on a resolvent $R$ of $A$  (for information on resolvents see Section \ref{sec:cotangent}). Before stating  Theorem \ref{thm:homotopyPoisson} let us recall some definitions. An \emph{$L_\infty$-algebra} is a graded vector space $L=\oplus_{k\in \Z}L^k$ over $\bs k$, whose degree is denoted $|\:|$,  with a sequence
$([\:,\dots,\:]_m)_{m\ge 1}$ of $\bs k$-linear operations $[\:,\dots,\:]_m:\bigwedge^m L\to L$ of degree $|l_m|=2-m$ such that for all $m\ge 1$
\begin{align}\label{eqn:Linftyalgebra}
\sum_{p+q=m+1} \sum_{\sigma \in \UnSh_{q, p-1}} (-1)^\sigma \varepsilon(\sigma,\bs x)(-1)^{q(p-1)}  \left[[x_{\sigma(1)},\dots,x_{\sigma(q)}]_q,x_{\sigma(q+1)},\dots,x_{\sigma(m)}\right]_p=0
\end{align}
for homogeneous $x_1,\dots,x_{m}\in L$. Here $\varepsilon(\sigma,\bs x)=(-1)^{\sum_{i<j,\sigma(i)>\sigma(j)}|x_i||x_j|}$ is the \emph{Koszul sign} of the permutation $\sigma$, $(-1)^\sigma$ its sign, and
%In fact $\varepsilon(\sigma,\bs x)$  is the character of the symmetric group $\mathrm S_m$ such that  $\varepsilon(\tau_i,\bs x)=(-1)^{|x_i||x_{i+1}|}$ for transpositions $\tau_i$ that switch $x_i$ and $x_{i+1}$.
$\UnSh_{q,p-1}$ stands for the $(q,p-1)$-unshuffle permutations, i.e., the set of permutations $\sigma$ of $\{1,2,\dots,m\}$ such that $ \sigma(1)<\sigma(2)<\dots<\sigma(q)$ and $\sigma(q+1)<\sigma(q+2)<\dots<\sigma(m)$.
%We will also use the notation $l_m(x_1,\dots,x_{m})=[x_1,\dots,x_{m}]_m$ and often write simply $[x_1,\dots,x_{m}]$ when the arity is understood.
Note that $[\:]_1$ is a codifferential. The grading $|\:|$ is referred to as the \emph{cohomological grading}.
%Frequently, people prefer to work with graded antisymmetric brackets. The corresponding algebraic structure is called an  \emph{$L_\infty$-algebra}.

By an \emph{$L_\infty$[1]-algebra} structure on a graded vector space $E=\oplus_{k\in \Z}E^k$ over $\bs k$ with degree $|\:|$ we mean a sequence $(l_m)_{m\ge 1}$ of $\bs k$-linear operations $l_m:\Sym^m E\to E$ of degree $|l_m|=1$ such that for all $m\ge 1$
\begin{align}\label{eqn:Linftyalgebra}
\sum_{p+q=m+1} \sum_{\sigma \in \UnSh_{q, p-1}}  \varepsilon(\sigma,\bs e)  l_p(l_q(e_{\sigma(1)},\dots,e_{\sigma(q)}),e_{\sigma(q+1)},\dots,e_{\sigma(m)})=0
\end{align}
for homogeneous $e_1,\dots,e_{m}\in E$. An  \emph{$L_\infty[1]$-algebra} structure on $L[1]$ is equivalent to an \emph{$L_\infty$-algebra} structure on $L$ by putting
\begin{align}\label{eq:decalage}
[x_1,\dots, x_m]_m=(-1)^{\sum_{i=1}^m(m-i)|x_i|} l_m(x_1[1],\dots,  x_m[1])[-1],
\end{align}
where $\downarrow:L\to L[1]$, $x\mapsto \downarrow x=x[1]$ is the identity seen as a map of degree $-1$. A more conceptual way to write this is $[\:,\dots,\:]_m=\uparrow\circ l_m\circ \downarrow^{\otimes n}$ where $\uparrow$ is the inverse of $\downarrow$. For details the reader may consult, e.g., \cite{Reinhold}.

%An \emph{$L_\infty$-algebra} structure on $L$ is equivalent to an \emph{$L_\infty$[1]-algebra} structure on $L[1]$ (for details the reader may consult \cite{Vitagliano}).

The notion of a left $L_\infty$-module goes back to \cite{LadaMarkl, Lada}.
%For ease of exposition we use the symmetric version, which is defined as follows \cite{Vitagliano}.
By a \emph{left $L_\infty$-module} over the $L_\infty$-algebra $L$ with brackets $[\:,\dots,\:]_m:\bigwedge^m L\to L$, $m\ge 1$ we mean a graded vector space $M=\oplus_k M^k$
over $\bs k$ with degree $|\:|$ and a sequence of operations $\rho_m:\bigwedge^{m-1} L \otimes M\to M$, $m\ge 1$, of degree $|\rho_m|=2-m$ such that for each $m\ge 1$
\begin{align}\label{eqn:Linftymodule}
\sum_{p+q=m+1} \sum_{\sigma \in \UnSh_{q, p-1}}  (-1)^\sigma\varepsilon(\sigma,\bs x)(-1)^{q(p-1)}  k_p(k_q(x_{\sigma(1)},\dots,x_{\sigma(q)}),x_{\sigma(q+1)},\dots,x_{\sigma(m)})=0
\end{align}
for homogeneous $x_1,\dots,x_{m-1}\in L$ and $x_m\in M$, where $k_m:\bigwedge^{m}(L \oplus M)\to L \oplus M$, $m\ge 1$, is the unique extension of the operations $[\:,\dots,\:]_m$ and $\rho_m$ such that
$k_m$ vanishes when two or more arguments are from $M$.
The definition entails that $\rho_1^2=0$, so that $(M,\rho_1)$ is actually a cochain complex.
%The notions of $L_\infty$-algebra and of left $L_\infty$-module can be neatly rephrased in the language of cofree dg coalgebras and their left dg comodules (see, e.g., \cite{Kontsevich}).

The next definition is closely related to what has been called in \cite{Vitagliano} a \emph{strong homotopy Lie-Rinehart algebra}.

\begin{definition} \label{def:Linftyalgebroid} By an \emph{$L_\infty$-algebroid over $\Spec(A)$} we mean an $L_\infty$-algebra  $L=\oplus_{k\in \Z}L^k$ with brackets $([\:,\dots,\:]_m)_{m\ge 1}$, such that each $L^k$ is an $A$-module, together with operations $\rho_m:\bigwedge^{m-1} L \otimes A\to A$, $m\ge 1$, that make $A$ a left $L_\infty$-module over $L$ satisfying the following properties
\begin{enumerate}
\item $\partial:=[\:]_1$ is $A$-linear and $\rho_1=0$,
\item for each $k\in \Z$, $L^k$ is a finitely generated projective $A$-module,
\item for all $m\ge 2$, homogeneous $x_1,\dots, x_m\in L$, and $a,b\in A$ we have
\begin{align*}
[x_1,\dots, x_{m-1},ax_m]_m&=\rho_m(x_1,\dots, x_{m-1},a)x_m+a\,[x_1,\dots, x_{m-1},x_m]_m,\\
\rho_m(x_1,\dots, x_{m-1},ab)&=\rho_m(x_1,\dots, x_{m-1},a)b+a\,\rho_m(x_1,\dots, x_{m-1},b),\\
\rho_m(ax_1,\dots, x_{m-1},b)&=a\rho_m(x_1,\dots, x_{m-1},b).
\end{align*}
\end{enumerate}
We refer to the collection of operations $(\rho_m)_{m\ge 1}$ as the \emph{homotopy anchor}. In the special case when $[\:,\dots,\:]_m=0$ for all $m\ge 3$ and $\rho_m=0$ for all $m\ge 2$ we say that $L$ is a \emph{dg Lie algebroid} over $\Spec(A)$.
\end{definition}
Our definition of an $L_\infty$-algebroid is more general than the one used in \cite{Strobl}.  When $\rho_m=0$ for $m\ge 3$ we recover their definition after applying \eqref{eq:decalage}.
A more general definition has been suggested in \cite{Kjeseth} under the name homotopy Lie-Rinehart pair. We also need to recall the notion of a $P_\infty$-algebra (see, for example, \cite{VoronovHigherDerived,CF}).

\begin{definition}
A \emph{$P_\infty$-algebra} is a supercommutative $\Z$-graded $\bs k$-algebra $R=\oplus_k R^k$ with degree $|\:|$ that is also an $L_\infty$-algebra with brackets $\{\:,\dots,\:\}_m: \bigwedge^m R\to R$ such that the Leibniz rule
\begin{align*}
\{ab,a_2,\dots, a_m\}_m=a\{b,a_2,\dots, a_m\}_m+(-1)^{|a||b|}b\{a,a_2,\dots, a_m\}_m
\end{align*}
holds for $a,b,a_2,\dots, a_m \in R$ with $a,b$ homogeneous. If all $\{\:,\dots,\:\}_m$ are zero for $m\ge 3$, we say $R$ is a \emph{dg Poisson} algebra.
\end{definition}

Evidently, by symmetry, the Leibniz rule holds for any argument of $\{\:,\dots,\:\}_m$. To construct the $P_\infty$-algebra of our main theorem we use the \emph{higher derived brackets} of T. Voronov \cite{VoronovHigherDerived}.  The notions of resolvent and cotangent complex will be reviewed in Section \ref{sec:cotangent}.

\begin{theorem} \label{thm:homotopyPoisson} Let $I\subset S=\bs k[x_1,x_2,\dots,x_n]$  be a Poisson ideal, let $A=S/I$, and let $f_1,\dots, f_k$ be generators for $I$. Let $R$ be a resolvent of $S\to A$ on the generators  $f_1,\dots, f_k$.
Then there is the structure of a $P_\infty$-algebra $(\{\:,\dots,\:\}_m)_m$ on $(R,\partial)$ such that $\partial=\{\:\}_1$ and the quasi-isomorphism $R\to A$ is compatible with the brackets.  If the internal degree of the Poisson bracket on $S$ is $p$, the internal degree of the $n$-ary bracket on $R$ is $(n-1)p$. If the generators $f_1,\dots, f_k$ are Casimir and form a complete intersection, the $P_\infty$-algebra structure is trivial in the sense that the only nonzero Poisson brackets are $\partial=\{\:,\:\}_1$ and the bracket $\{\:,\:\}=\{\:,\:\}_2$ on $S$.
\end{theorem}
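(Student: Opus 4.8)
My plan is to realize the $P_\infty$-structure as a Maurer--Cartan element of the Schouten graded Lie algebra $\mathfrak d(R)=\bigoplus_{m\ge 1}\operatorname{Der}^m(R)$ of graded-symmetric multiderivations of $R$ (equivalently, one may package everything through Voronov's higher derived brackets, as advertised: take the abelian subalgebra to be $R$ itself inside the Lie algebra of formal differential operators on $R$, let $P$ be the projection onto multiplication operators, and build a square-zero generating operator $\Delta=\partial+\Delta_2+\Delta_3+\cdots$ with $\Delta_j$ of order $j$). Under the standard correspondence, a $P_\infty$-structure on $(R,\partial)$ with $\{\:\}_1=\partial$ is the same datum as a sum $\pi=\partial+\sum_{m\ge 2}\pi_m$, $\pi_m\in\operatorname{Der}^m(R)$ of cohomological degree $2-m$, satisfying $[\pi,\pi]=0$; the arity-$m$ component of this equation is
\[
[\partial,\pi_m]\;=\;O_m\;:=\;-\tfrac12\!\!\sum_{\substack{p+q=m+1\\ p,q\ge 2}}\![\pi_p,\pi_q],
\]
an expression involving $\pi_2,\dots,\pi_{m-1}$ only (for $m=3$ it is the obstruction to the Jacobi identity for $\pi_2$). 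Recall that since $R$ is a free graded-commutative $S$-algebra, a multiderivation is determined by, and may be freely prescribed on, tuples of algebra generators, and that the resolvent hypotheses give $R^0=S$, $\partial(R^{-1})=I$, and $H^{<0}(R)=0$.

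I would build $\pi$ by induction on the arity. Write $e_1,\dots,e_k$ for the Koszul generators, with $\partial e_\mu=f_\mu$. For $\pi_2$: set $\pi_2=\{\:,\:\}$ on $S\times S$ and $\pi_2(x_i,e_\mu)=\sum_\nu Z_{i\mu}^\nu e_\nu$, which by \eqref{eq:theZs} is a $\partial$-primitive of $\pi_2(x_i,\partial e_\mu)=\{x_i,f_\mu\}\in I=\partial(R^{-1})$; on the remaining (higher Tate) generators proceed by a secondary induction on the total homological degree of the pair, solving at each step $\partial\pi_2(g,g')=(\text{an expression in the values already chosen})$, where one checks that the right-hand side is a $\partial$-cycle (using \eqref{eq:theZs} and the Jacobi identity on $S$) lying in $R^{<0}$ (a degree count), hence a $\partial$-boundary since $H^{<0}(R)=0$. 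This yields a multiderivation with $[\partial,\pi_2]=0$. For $m\ge 3$, assuming $\pi_2,\dots,\pi_{m-1}$ already built compatibly, a routine computation with the graded Jacobi identity for the Schouten bracket and the lower relations gives $[\partial,O_m]=0$; since $O_m$ has cohomological degree $2-m\le-1$ it sends every tuple of algebra generators into $R^{<0}$, and running once more the secondary induction on the homological degree of the input tuple --- using $H^{<0}(R)=0$ and $[\partial,O_m]=0$ to certify that the relevant right-hand sides are cycles --- one solves $[\partial,\pi_m]=O_m$ for a graded-symmetric $\pi_m$. I expect this to be the only real difficulty: verifying at every stage of the two nested inductions that the element whose $\partial$-preimage is sought is genuinely a cycle (this is where \eqref{eq:theZs}, Jacobi on $S$, the identity $[\partial,O_m]=0$, and the lower relations all get consumed), and keeping the cohomological-degree count sharp enough that these cycles live in the acyclic part $R^{<0}$ of the resolvent.

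All of these choices can be made homogeneous for the internal grading. Since $\deg\pi_2=\deg\{\:,\:\}=p$, the operator $[\partial,-]$ preserves internal degree, and $[\pi_{p'},\pi_{q'}]$ has internal degree $\deg\pi_{p'}+\deg\pi_{q'}$, an induction gives $\deg\pi_m=(m-1)p$, i.e. the $m$-ary bracket on $R$ has internal degree $(m-1)p$. That the quasi-isomorphism $R\to A$ is a morphism of $P_\infty$-algebras is then immediate: for $m\ge 3$ the bracket $\pi_m$ has negative cohomological degree, so it maps to $0$ in $A$, and $\pi_2$ descends to $\{\:,\:\}$ on $A$ because $\ker(R\to A)=R^{<0}\oplus I$ is a $\pi_2$-ideal --- the property $\{I,S\}\subseteq I$ being exactly what is needed in cohomological degree $0$. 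One may moreover arrange that $\pi_m$ vanishes on $S^{\times m}$ for $m\ge 3$ (on tuples from $S$ the inputs that survive in $O_m$ reduce inductively to the Jacobiator of $\{\:,\:\}$), so that $S\hookrightarrow R$ is compatible with the brackets as well.

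Finally, suppose $f_1,\dots,f_k$ are Casimir and form a complete intersection. Then $R$ is the Koszul complex $S[e_1,\dots,e_k]$, with no higher Tate generators, one may take $Z_{i\mu}^\nu=0$, and $\pi_2$ is the biderivation extending $\{\:,\:\}$ with $\pi_2(x_i,e_\mu)=\pi_2(e_\mu,e_\nu)=0$. Here $[\partial,\pi_2]=0$ holds on generators by inspection (using that the $f_\mu$ are Casimir), and $[\pi_2,\pi_2]$ vanishes on every triple of generators: on a triple from $S$ it is the Jacobiator of $\{\:,\:\}$, which is zero, and on any triple involving some $e_\mu$ it vanishes because $\pi_2$ is zero on every pair of generators containing an $e_\mu$. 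Hence $[\pi_2,\pi_2]=0$, so $O_m=0$ for all $m\ge 3$, and taking $\pi_m=0$ the $P_\infty$-structure collapses to the dg Poisson structure whose only nonzero brackets are $\{\:\}_1=\partial$ and the biderivation $\{\:\}_2$ restricting to $\{\:,\:\}$ on $S$.
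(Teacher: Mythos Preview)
Your approach is correct and shares the same skeleton as the paper's: build a Maurer--Cartan element in the Schouten algebra of multiderivations of $R$ by obstruction theory, then read off the $P_\infty$-brackets (the paper does this last step via Voronov's derived brackets, Theorem~\ref{thm:derivedbr}, which is equivalent to your direct identification of $\pi_m$ with the $m$-ary bracket). Your treatment of the internal degree, of compatibility with $R\to A$, and of the Casimir complete-intersection case all match the paper in substance.

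The organization of the induction, however, is genuinely different. You run an outer induction on the arity $m$ and an inner induction on the total homological level $\sum(-|g_j|)$ of the tuple of generators. The paper instead introduces a single multiplicative filtration $\fd$ on $\mathfrak g$ with $\fd(\xi^i_{(m)})=m+1$; since the filtration degree of a monomial in the $\xi$'s equals (arity) $+\sum m_j$, this is precisely your two induction parameters collapsed into one. Consequently the paper's $\pi_m$ is the piece of filtration degree $m+1$ and \emph{not} the arity-$m$ piece: e.g.\ its $\pi_3$ already contains biderivation terms $\pi_3^{11}$ and $\pi_3^{02}$ alongside the ternary term $\pi_3^{001}$. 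What the paper's packaging buys is (i) a one-line Jacobi argument that each obstruction is a $\partial$-cycle (Lemma~\ref{lem:recursion}), replacing the nested cycle checks you flag as the main difficulty, (ii) the structural finiteness statement $\pi_m\in\mathfrak h_{\le m-1}$, i.e.\ the step-$m$ correction only involves derivations with respect to resolvent variables of level $\le m-1$, and (iii) honest convergence of $\sum\pi_m$ in the $\mathcal F$-adic completion $\mathfrak g$ (Proposition~\ref{prop:Fadic}). Your organization, by contrast, produces each bracket $\{\,,\dots,\,\}_m$ directly as a globally defined multiderivation without any completion, which makes claims like ``$\pi_m$ vanishes on $S^{\times m}$ for $m\ge 3$'' immediate and is closer to how one would actually compute examples.
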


In the case of complete intersections a version of the theorem has been suggested in \cite{FresseCI}. In order to get a clear picture of the construction, we  felt it necessary to delve into details and elaborate examples.  The higher Koszul brackets are deduced from the $P_\infty$-algebra as a corollary.

\begin{corollary}\label{cor:homotopyLiealgebroid} Under the assumptions of Theorem \ref{thm:homotopyPoisson} there is the structure of an $L_\infty$-algebroid on the cotangent complex $\mathbb L_{A|\bs k}=A\otimes_R \Omega_{R|\bs k}$ such that the morphism $\mathbb L_{A|\bs k}\to\Omega_{A|\bs k}$ is compatible with the brackets and the anchors.
If the generators $f_1,\dots, f_k$ are Casimir and form a complete intersection, the $L_\infty$-algebroid structure is trivial in the sense that the only nonzero Lie brackets are given by $\left[\dR x_i,\dR x_j\right]=\dR\{x_i,x_j\}$. If the internal degree of the Poisson bracket on $S$ is $p$, the internal degree of the $n$-ary bracket and  $n$-ary anchor on $\mathbb L_{A|\bs k}$ is $(n-1)p$.
\end{corollary}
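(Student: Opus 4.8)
The plan is to transport the $P_\infty$-algebra structure on the resolvent $R$ produced by Theorem~\ref{thm:homotopyPoisson} onto K\"ahler differentials, and then descend along the augmentation $\pi\colon R\to A$. Conceptually, the K\"ahler differentials $\Omega_{R|\bs k}$ of a $P_\infty$-algebra carry an $L_\infty$-analogue of a Lie-Rinehart pair over the \emph{dg base} $R$ --- essentially the homotopy Lie-Rinehart pairs of \cite{Kjeseth} and the strong homotopy Lie-Rinehart algebras of \cite{Vitagliano} --- obtained by upgrading Huebschmann's formula \eqref{eq:Koszulbr}; one may equally well realize it through Voronov's higher derived brackets on derivations of $\Sym_R(\Omega_{R|\bs k})$. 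Concretely, since $\Omega_{R|\bs k}$ is the graded-free $R$-module on the symbols $\dR r$, I would \emph{define}, with the Koszul signs dictated by the d\'ecalage \eqref{eq:decalage},
\[
[\dR r_1,\dots,\dR r_m]_m := \pm\,\dR\{r_1,\dots,r_m\}_m,
\qquad
\rho_m(\dR r_1,\dots,\dR r_{m-1},a):=\pm\,\{r_1,\dots,r_{m-1},a\}_m ,
\]
for $r_i,a\in R$, and then extend over $R$ by graded symmetry and the three Leibniz identities of Definition~\ref{def:Linftyalgebroid}(3). The first point to check is that this extension is well defined, i.e.\ independent of the way an element of $\Omega_{R|\bs k}$ is expressed as an $R$-combination of exact forms; this is exactly what the graded Leibniz rule for the brackets $\{\cdots\}_m$ of $R$ guarantees, since it makes $\pm\dR\{r_1r_1',\dots\}_m$ compatible with $\dR(r_1r_1')=r_1\,\dR r_1'+r_1'\,\dR r_1$.

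Next I would verify the $L_\infty$-algebroid axioms for this structure on $\Omega_{R|\bs k}$ and then base change. The higher Jacobi identities \eqref{eqn:Linftymodule} for the operations on $\Omega_{R|\bs k}$ follow by applying the universal derivation $\dR$ to the $L_\infty$-identities \eqref{eqn:Linftyalgebra} holding on $R$, using that $\dR$ is a derivation and that all operations involved obey the Leibniz rule; the arity-one instance is just $\dR\circ\partial=\pm\,\partial_{\Omega_{R|\bs k}}\circ\dR$. Base change then yields the $L_\infty$-algebroid on $\mathbb L_{A|\bs k}=A\otimes_R\Omega_{R|\bs k}=\Omega_{R|\bs k}/\mathfrak a\,\Omega_{R|\bs k}$, where $\mathfrak a:=\ker\pi$: since $\pi$ is a morphism of $P_\infty$-algebras (Theorem~\ref{thm:homotopyPoisson}) we have $\partial\mathfrak a\subseteq\mathfrak a$ and $\{\mathfrak a,R,\dots,R\}_m\subseteq\mathfrak a$ in each slot, and combining this with the Leibniz identities shows that $\mathfrak a\,\Omega_{R|\bs k}$ is preserved by every $[\cdots]_m$ and that the $\rho_m$ descend. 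Condition~(1) of Definition~\ref{def:Linftyalgebroid} holds because $\operatorname{im}\partial\subseteq\mathfrak a$ forces the induced differential on $\mathbb L_{A|\bs k}$ to be $A$-linear and $\rho_1$ to vanish; condition~(2) holds because a resolvent is, as a graded algebra, free graded-commutative over $S$ on finitely many generators in each internal and cohomological degree, so $\Omega_{R|\bs k}$ is graded-free of finite type over $R$ and $\mathbb L_{A|\bs k}$ is a complex of finitely generated free $A$-modules; condition~(3) is the descent of the Leibniz identities already imposed on $\Omega_{R|\bs k}$. The internal-degree bookkeeping is inherited verbatim from Theorem~\ref{thm:homotopyPoisson}: $\dR$ preserves internal degree, so the $m$-ary bracket and $m$-ary anchor on $\mathbb L_{A|\bs k}$ have internal degree $(m-1)p$.

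It remains to check compatibility with $\Omega_{A|\bs k}$ and to extract the complete-intersection statement. The canonical map $\mathbb L_{A|\bs k}\to\Omega_{A|\bs k}$ is induced by $\dR r\mapsto\dR\pi(r)$; it vanishes in negative cohomological degree (the higher generators of the resolvent map to $0$ in $A$), and since $[\dR x_{i_1},\dots,\dR x_{i_m}]_m$ and the values of $\rho_m$ with cohomological-degree-$0$ arguments land in cohomological degree $2-m$, the only surviving contributions are $[\dR x_i,\dR x_j]_2\mapsto\dR\{\overline x_i,\overline x_j\}$ and $\rho_2(\dR x_i,a)\mapsto\{\overline x_i,a\}$, which are exactly the Lie-Rinehart operations on $\Omega_{A|\bs k}$; hence the map is a morphism of $L_\infty$-algebroids, compatible with brackets and anchors. (In fact the same degree count shows $\rho_m=0$ for all $m\ge 3$.) Finally, if the $f_\mu$ are Casimir and form a complete intersection, Theorem~\ref{thm:homotopyPoisson} collapses the $P_\infty$-structure on $R$ to $\partial$ and the (trivial extension of the) Poisson bracket $\{\cdot,\cdot\}_2$; feeding this into the construction kills $[\cdots]_m$ for $m\ge 3$ and all $\rho_m$ for $m\ge 3$, leaving the differential together with the ordinary Koszul bracket, whose restriction to the free generators is $[\dR x_i,\dR x_j]=\dR\{x_i,x_j\}$ and which vanishes on the differentials of the Koszul generators --- a dg Lie algebroid in the sense of Definition~\ref{def:Linftyalgebroid}.

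The genuinely nontrivial step is the one in the second paragraph: checking that the operations defined on $\Omega_{R|\bs k}$ are well defined and satisfy the entire tower of higher Jacobi identities. This is a sign- and combinatorics-heavy but conceptually routine upgrade of Huebschmann's computation for the classical Koszul bracket, and a reader willing to cite \cite{Vitagliano}, where such an upgrade is carried out for strong homotopy Lie-Rinehart algebras, can bypass most of it --- leaving only the base-change, projectivity and compatibility claims, which are the short formal arguments sketched above.
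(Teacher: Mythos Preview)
Your proposal is correct and follows essentially the same route as the paper: define $[\dR a_1,\dots,\dR a_m]_m:=\dR\{a_1,\dots,a_m\}_m$ and $\rho_m(\dR a_1,\dots,\dR a_{m-1},b):=\{a_1,\dots,a_{m-1},b\}_m$, then observe that the higher Jacobi identities and the Leibniz-type axioms of Definition~\ref{def:Linftyalgebroid} are inherited directly from the $P_\infty$-identities on $R$. The paper's own argument is a two-sentence version of exactly this; your write-up is more careful about well-definedness, the descent along $R\to A$, and the degree count forcing $\rho_m=0$ for $m\ge 3$, all of which are legitimate details the paper leaves implicit.
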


In this paper two tensors play a major role. Both depend on the choice of $Z_{i\mu}^\nu$. The first tensor is given by
\begin{align}\label{eq:Amunu}
\mathcal A_{\mu \nu}^\lambda :=\sum_{i=1}^n\left( {\partial f_\mu\over \partial x_i}Z_{i\nu}^\lambda+{\partial f_\nu\over \partial x_i}Z_{i\mu}^\lambda \right)\in S
\end{align}
for indices $\mu,\nu,\lambda\in\{1,\dots,k\}$. Evidently $\mathcal A_{\mu \nu}^\lambda=\mathcal A_{\nu \mu}^\lambda$.
The second tensor is constructed from Poisson cohomology (cf. Appendix \ref{ap:Poissoncohomology}) as follows.
Fixing the index $i$ we can view $Z_{i\mu}^\nu$ as the entry in row $\mu$ and column $\nu$ of a $k\times k$-matrix $Z_i$ with coefficients in $S$. For the set of $k\times k$-matrices with entries in $S$  we write $\mathfrak{gl}_k(S)$. Using the commutator of matrices $\mathfrak{gl}_k(S)$ forms a Lie algebra over $S$. Tensoring this Lie algebra with a supercommutative dg algebra we can form a dg Lie algebra, for example $\mathrm{C}^\bullet_{\operatorname{Poiss}}(S)\otimes_S\mathfrak{gl}_k(S)$. For the definition of the complex  $(\mathrm{C}^\bullet_{\operatorname{Poiss}}(S),\dP)$ see Appendix \ref{ap:Poissoncohomology}.
Writing $Z:=\sum_i \partial/\partial x_i\otimes Z_i\in \mathrm{C}^1_{\operatorname{Poiss}}(S)\otimes_S\mathfrak{gl}_k(S)$, the tensor relevant for our discussion is
\begin{align}\label{eq:MC}
\dP Z-[Z,Z]\in\mathrm{C}^2_{\operatorname{Poiss}}(S)\otimes_S\mathfrak{gl}_k(S).
\end{align}
In a similar way $\mathrm C^\bullet_{\operatorname{Poiss}}(S)\otimes_S\mathfrak{gl}_k(A)$ is a dg Lie algebra as well
and, taking classes modulo $I$, we have a surjective morphism $\mathrm C^\bullet_{\operatorname{Poiss}}(S)\otimes_S\mathfrak{gl}_k(S)\to \mathrm C^\bullet_{\operatorname{Poiss}}(S)\otimes_S\mathfrak{gl}_k(A)$.
It is an easy exercise to check that the image of $\dP Z-[Z,Z]$ in $\mathrm C^\bullet_{\operatorname{Poiss}}(S)\otimes_S\mathfrak{gl}_k(A)$ does merely depend on the classes $Z_{i\mu}^\nu+I$.

If the aforementioned tensors vanish certain simplifications occur at the start of the iterative procedure constructing the $P_\infty$-algebra structure of Theorem \ref{thm:homotopyPoisson}. If all of the tensors vanish and $f_1,\dots, f_k$ is a complete intersection our $P_\infty$-algebra simplifies to a dg Poisson algebra. This happens for example for principal Poisson ideals, see Theorem \ref{thm:principaldgPoisson}. In order to get a better impression which simplifications can occur we present numerous examples. The main examples of Poisson ideals we consider come from symplectic reduction or from linear Poisson structures. We also include some examples of Poisson brackets of higher degree. These enable us to concoct more academic examples where the resolvent is more tractable in low cohomological degrees. To our knowledge the technology of how to construct Poisson ideals for polynomial Poisson structures is not completely developed, and we feel that the problem deserves more attention. As the example calculations get quickly overwhelming we relied on computer calculations using \emph{Mathematica} and \emph{Macaulay2}. We mention that all examples of quadratic Poisson ideals $I$ that occur in this work give rise to Koszul algebras $S/I$ (for an introduction to Koszul algebras see, e.g., \cite{polishchuk2005quadratic}). In the Koszul case simplifications in the $P_\infty$-algebra structure of Theorem \ref{thm:homotopyPoisson} arise if the degree of the Poisson bracket is $\le 0$, see Theorem \ref{thm:Koszul}. The adaptation of our results to the situation when $S=\bs k[x_1,\dots,x_n]$ is replaced by the non-Noetherian $\R$-algebra $\mathcal C^\infty(\R^k)$ of smooth functions on $\R^k$ is possible and will be elaborated at another occasion. We are also working on an application to the problem of deformation quantization of singular Poisson varieties.
We are planning to address applications to deformation theory of Poisson singularities and investigate repercussions for Poisson cohomology.

Our paper is structured as follows. In Section \ref{sec:affine} we present examples of Poisson ideals. Here we focus on symplectic quotients, conical varieties stable under coadjoint actions and certain polynomial Poisson structures (diagonal and determinantal brackets).
In Section \ref{sec:ci} we present a version of Corollary \ref{cor:homotopyLiealgebroid} for the case of complete intersections with a proof independent of Theorem \ref{thm:homotopyPoisson}. In Section \ref{sec:connection} we interpret the $Z_{i\mu}^\nu$ as the Christoffel symbols of a Poisson connection; this material is not used in later sections. In Section \ref{sec:cotangent} we recall the notions of resolvent and cotangent complex and fix notations for later use. In Section \ref{sec:homotopystuff} we develop our homological perturbation theory argument that enables us to prove our main results Theorem \ref{thm:homotopyPoisson} and Corollary \ref{cor:homotopyLiealgebroid}. In Section \ref{sec:examples} we provide sample calculations that were mostly obtained using a combination of \emph{Mathematica} and the \emph{Macaulay2} package \emph{dgalgebras}.

The paper is mainly addressed to two audiences: 1) people from commutative algebra and 2) people from Poisson geometry and physics. We have resisted the temptation to write coordinates with upper indices to not irritate the former. However, we often use index notation for tensors and Einstein summation convention to make the presentation of the material clearer to the latter. Our article touches on three seemingly unrelated subjects that are named after J.-L. Koszul (1921--2018): the Koszul bracket, the Koszul complex, and Koszul algebras.

\vspace{1cm}
\noindent\emph{Acknowledgements.} The authors would like to thank Daniel Levcovitz for sharing information on the cotangent complex. They are indebted to Srikanth Iyengar and Benjamin Briggs for their interest in this project and, in particular, for catching an error in an earlier draft. HCH would like to thank Martin Bordemann for indoctrination.
We benefited from the \emph{Mathematica} packages DiracQ \newline
\url{http://physics.ucsc.edu/~sriram/DiracQ/} \newline
and the \emph{grassmann.m} \emph{Mathematica} package by Matthew Headrick \newline
\url{https://people.brandeis.edu/~headrick/Mathematica/}\newline in learning how to perform the computations needed for this paper with \emph{Mathematica}.

%******************* EXAMPLES ***************************
\section{Examples of affine Poisson algebras}\label{sec:affine}

\subsection{Symplectic quotients of cotangent lifted $G$-modules} Let $G$ be a complex reductive Lie group and $V$ be an $m$-dimensional $G$-module. Let us write $q_1,q_2,\dots, q_m$ for linear coordinates on $V$ and $p_1,p_2,\dots, p_m$ for linear coordinates on $V^*$.
We assign to them the internal degree $1$. Then $\C[V\times V^*]$ is a Poisson algebra with bracket $\{q_i,p_j\}=\delta_{ij}$ of degree $-2$. The moment map for the representation is the map $J:V\times V^*\to \mathfrak g^*$
defined by $J(q,p)(\xi)=p(\xi q)$ where $p\in V^*$, $\xi\in \mathfrak g$ and $q\in V$.
Using the linear coordinates this is $\sum_{i,j}p_j\xi_{ij}q_i$, where $\xi_{ij}$ is the representation matrix of $\xi$ in the  representation $V=\C^m$. Let $N\subseteq V\times V^*$ be locus of all the polynomials $J_\xi(q,p):=J(q,p)(\xi)$ where $\xi$ ranges over $\mathfrak g$. For simplicity we assume that the ideal $(J_\xi(q,p)\mid \xi\in\mathfrak g)$ in $\C[V\times V^*]$ is radical (for details see \cite{HSSCompositio}). The \emph{symplectic quotient} of the $G$-action on $V$ is the categorical  quotient $N/\!\!/G$, i.e., $\Spec (\C[V\times V^*]^G/(\C[V\times V^*]^G\cap (J_\xi(q,p)\mid \xi\in\mathfrak g)))$. It inherits a Poisson bracket of degree $-2$.

If $G\subset\operatorname{SL}_2(\C)$ is a finite subgroup let us write $q,p$ for the linear coordinates of $\C^2$. Since the $G$-action is unimodular,
it preserves the Poisson bracket defined by $\{q,p\}=1$. We set $\deg(q)=\deg(p)=1$
so that $\C[q,p]$ is a Poisson algebra whose bracket has internal degree $-2$. Since the moment map $J$ is zero here, the symplectic quotient $N/\!\!/G$ is simply $\C^2/G$. Using polynomial $G$-invariants, the latter is determined as a hypersurface with algebra of functions $A=\C[x_1,x_2,x_3]/(f_G)$. Recall that those finite subgroups are classified by Dynkin diagrams $A_m$ $(m\ge 1)$, $D_m$ ($m\ge 0$), and the three exceptional $E_6,E_7$ and $E_8$. Below we will analyze $A=\C[x_1,x_2,x_3]/(f_G)$ as Poisson algebras. The internal degrees of the variables $x_i$ are determined by the degrees of the corresponding invariants. Our convention is that it is weakly increasing with the index of the variable. For the polynomial invariant theory of these groups see \cite{klein1993felix,Dolgachev,ONAsh}.
It is closely related to the \emph{Grundformen} of F. Klein. The Poisson bracket on the algebra of invariants has been addressed before (see, e.g., \cite{Alev}), but we could not find in the literature the observation that the Poisson ideals are generated by Casimirs. Low dimensional symplectic quotients are often symplectomorphic to (cartesian products of) Kleinian singularities, see for example \cite{FHSSigma,HSSadv,CHS}.

We also include two examples of symplectic quotients by nonfinite $G$ that are not symplectomorphic to orbifolds. We have used \emph{Macaulay2} to present the algebra of the symplectic quotient in terms of generators and relations. We emphasize that for the majority of representations it is practically hopeless to find such a presentation.

\subsubsection{Kleinian singularity $A_m$}
In the case of the diagram  $A_m$ the group $G$ is the cyclic group $\mathbb \Z_{N}$ with $N=m+1$ where  $\zeta=\exp(2\pi\I/N)$ acts by    $q\mapsto \zeta q$ and $p\mapsto \zeta^{-1}p$. A complete set of polynomial invariants is given by $\varphi_1(q,p)=qp$, $\varphi_2(q,p)=q^N$, and $ \varphi_3(q,p)=p^N$ and satisfies the relation $\varphi_2\varphi_3=\varphi_1^N$. The brackets can be easily evaluated:
$\{\varphi_1,\varphi_2\}=-N \varphi_2$, $\{\varphi_1,\varphi_3\}=N \varphi_3$ and $\{\varphi_2,\varphi_3\}=N^2\varphi_1^{N-1}$. We conclude that the Poisson algebra for the case of $A_m$ is  $\C[x_1,x_2,x_3]/(x_2x_3-x_1^N)$ with bracket table:
\begin{align*}
\begin{tabular}{c||c|c|c}
  $\{\:,\:\}_{A_m}$   & $x_1$&$x_2$&$x_3$\\\hline\hline
$x_1$ &   0  &$-Nx_2$&$Nx_3$\\\hline
$x_2$ &      &$0$    & $N^2x_1^{N-1}$\\\hline
$x_3$ &      &      & $0$
\end{tabular}
\end{align*}
Note that $\deg( \{x_1,f_{A_m}\})-\deg(f_{A_m})=\deg(x_1)-2$, hence $Z_{11}^1=0$. Let us point out  that,  by the same reasoning, in the case of a hypersurface with $\deg(\{\:,\:\})<0$ we have $Z_{11}^1=0$, as by our convention $x_1$ is a variable of lowest internal degree. For $i=2$ or $3$, $\deg( \{x_i,f_{A_m}\})-\deg(f_{A_m})=N-2$.
If $N$ is odd or equal to $2$ this implies $Z_{21}^1=0=Z_{31}^1$
since $\deg(x_1)$ is even. Otherwise, we do not know a better way than verifying by hand $\{x_2,x_2x_3-x_1^N\}=x_2\{x_2,x_3\}-Nx_1^{N-1}\{x_2,x_1\}=N^2x_1^{N-1}x_2-N^2x_1^{N-1}x_2=0$, and similarly for $x_3$. We conclude that
$f_{A_m}(x_1,x_2,x_3)=x_2x_3-x_1^N$ is a Casimir generator.

\subsubsection{Kleinian singularity $D_m$}
In the case of the diagram  $D_m$ the group $G$ is the binary dihedral group $\operatorname{BD_{4N}}$ of order $4N$ with $N=m+2$. The action is generated by $(q,p)\mapsto (\zeta q,\zeta^{-1}p)$, where  $\zeta=\exp(\pi\I/N)$, and $(q,p)\mapsto (p,-q)$.  A complete set of polynomial invariants is given by $\varphi_1(q,p)=q^2p^2$, $\varphi_2(q,p)=q^{2N}+p^{2N}$, and $ \varphi_3(q,p)=qp(q^{2N}-p^{2N})$ and satisfies the relation $\varphi_1\varphi_2^2-\varphi_3^2=4\varphi_1^{N+1}$. The Poisson relations are $\{\varphi_1,\varphi_2\}=-4N\varphi_3=-2\deg(\varphi_2)\varphi_3$,  $\{\varphi_1,\varphi_3\}=-4N\varphi_1\varphi_2=-2\deg(\varphi_2)\varphi_1\varphi_2$,
and
\begin{align*}
\{\varphi_2,\varphi_3\}&=\{q^{2N}+p^{2N},qp(q^{2N}-p^{2N})\}=2N(q^{2N}-p^{2N})^2-2qp\{q^{2N},p^{2N}\}\\
&=2N\varphi_2^2-8N\varphi_1^N-8N^2\varphi_1^N=2N\varphi_2^2-8N(N+1)\varphi_1^N=\deg(\varphi_2)(\varphi_2^2-2\deg(\varphi_3)\varphi_1^N).
\end{align*}
We conclude that the Poisson algebra for the case of $D_m$ is  $\C[x_1,x_2,x_3]/( x_1x_2^2-x_3^2-4x_1^{N+1})$ with bracket table:
\begin{align*}
\begin{tabular}{c||c|c|c}
  $\{\:,\:\}_{D_m}$   & $x_1$&$x_2$&$x_3$\\\hline\hline
$x_1$ &   0  &$-2(\deg x_2)x_3$&$-2(\deg x_2)x_1x_2$\\\hline
$x_2$ &      &$0$    & $(\deg x_2)(x_2^2-2(\deg x_3)x_1^N)$\\\hline
$x_3$ &      &      & $0$
\end{tabular}
\end{align*}
We invite the reader to verify from the bracket table that  $f_{D_m}(x_1,x_2,x_3)=x_1x_2^2-x_3^2-4x_1^{N+1}$ is a Casimir generator, as degree considerations appear to be inconclusive.

\subsubsection{Kleinian singularity $E_6$}
In the case of the diagram  $E_6$ the group $G$ is the binary tetrahedral group $\operatorname{BT}$ of order $24$. It is generated by the matrices
\begin{align*}
\left(
\begin{matrix}
\I&0\\
0&-\I
\end{matrix}
\right), \quad
\left(
\begin{matrix}
0&-1\\
1&0
\end{matrix}
\right), \quad\mbox{and }\quad
{1\over 2}\left(
\begin{matrix}
1+\I&-1+\I\\
1+\I&1-\I
\end{matrix}
\right).
\end{align*}
A complete set of  polynomial invariants is given by
\begin{align*}
&\varphi_1(q,p)=q^{5}p-q p^5,\\
&\varphi_2(q,p)=q^{8}+14 q^{4}p^4+p^8,\\
&\varphi_3(q,p)=q^{12}-33 (q^{8}p^4+q^{4}p^{8})+p^{12}
\end{align*}
and satisfies the relation $\varphi_3^2-\varphi_2^3 =-108\varphi_1^4$. We invite the reader to check the commutation relations
$\{\varphi_1,\varphi_2\} =-8 \varphi_3=-(\deg \varphi_2)\varphi_3$,  $\{\varphi_1,\varphi_3\} =-12 \varphi_2^2=-(\deg \varphi_3)\varphi_2^2$ and
$\{\varphi_2,\varphi_3\} =-1728 \varphi_1^3=-(\deg \varphi_3)^3\varphi_1^3$.
Hence for the binary tetrahedral group the Poisson algebra is $\C[x_1,x_2,x_3]/(x_3^2-x_2^3 +108x_1^4)$ with bracket table:
\begin{align*}
\begin{tabular}{c||c|c|c}
  $\{\:,\:\}_{E_6}$   & $x_1$&$x_2$&$x_3$\\\hline\hline
$x_1$ &   0  &$-(\deg x_2)x_3$&$-(\deg x_3)x_2^2$\\\hline
$x_2$ &      &$0$    & $-(\deg x_3)^3x_1^3$\\\hline
$x_3$ &      &      & $0$
\end{tabular}
\end{align*}
Obviously $Z_{11}^1=0$. Also $\deg( \{x_3,f_{E_6}\})-\deg(f_{E_6})=\deg(x_3)-2=10$ is not in the $\Z_{\ge 0}$-span of $\deg(x_1)=6$ and $\deg(x_2)=8$, and hence $Z_{31}^1=0$. We leave is to the reader to verify from the bracket table that $Z_{21}^1=0$, and conclude that $f_{E_6}(x_1,x_2,x_3)=x_3^2-x_2^3 +108x_1^4$ is a Casimir generator.

\subsubsection{Kleinian singularity $E_7$}
In the case of the diagram  $E_7$ the group $G$ is the binary octahedral group $\operatorname{BO}$ of order $48$. It is generated by the matrices
\begin{align*}{1\over \sqrt{2}}
\left(
\begin{matrix}
1+\I&0\\
0&1-\I
\end{matrix}
\right), \quad
\left(
\begin{matrix}
0&-1\\
1&0
\end{matrix}
\right), \quad\mbox{and }\quad
{1\over 2}\left(
\begin{matrix}
1+\I&-1+\I\\
1+\I&1-\I
\end{matrix}
\right).
\end{align*}
A complete set of polynomial invariants is given by
\begin{align*}
&\varphi_1(q,p)=q^{8}+14q^4 p^4+p^{8},\\
&\varphi_2(q,p)=q^{10}p^{2}-2 q^{6}p^6+q^{2}p^{10},\\
&\varphi_3(q,p)=q^{17}p-34 (q^{13}p^5-q^{5}p^{13})-qp^{17}
\end{align*}
and satisfies the relation $\varphi_1^3\varphi_2-\varphi_3^2=108\varphi_2^3$. We invite the reader to check the commutation relations
$\{\varphi_1,\varphi_2\} =16 \varphi_3=2(\deg \varphi_1)\varphi_3$,  $\{\varphi_1,\varphi_3\} =8( \varphi_1^3-324\varphi_2^2)=\deg \varphi_1( \varphi_1^3-(\deg\varphi_3)^2\varphi_2^2)$, and
$\{\varphi_2,\varphi_3\} =-24 \varphi_1^2 \varphi_2=-2(\deg \varphi_2) \varphi_1^2 \varphi_2$.
Hence for the binary octahedral group the Poisson algebra is $\C[x_1,x_2,x_3]/(x_1^3x_2-x_3^2-108x_2^3)$ with bracket table:
\begin{align*}
\begin{tabular}{c||c|c|c}
  $\{\:,\:\}_{E_7}$   & $x_1$&$x_2$&$x_3$\\\hline\hline
$x_1$ &   0  &$2(\deg x_1)x_3$&$\deg x_1( x_1^3-(\deg x_3)^2x_2^2)$\\\hline
$x_2$ &      &$0$    & $-2(\deg x_2) x_1^2x_2$\\\hline
$x_3$ &      &      & $0$
\end{tabular}
\end{align*}
Obviously $Z_{11}^1=0$. Also $\deg( \{x_2,f_{E_7}\})-\deg(f_{E_7})=\deg(x_2)-2=10$ is not a multiple of $\deg(x_1)=8$, and hence $Z_{21}^1=0$. We leave is to the reader to verify from the bracket table that $Z_{31}^1=0$, and conclude that $f_{E_7}(x_1,x_2,x_3)=x_1^3x_2-x_3^2-108x_2^3$ is a Casimir generator.

\subsubsection{Kleinian singularity $E_8$}
In the case of the diagram  $E_8$ the group $G$ is the binary icosahedral group $\operatorname{BI}$ of order $120$. It is generated by the matrices
\begin{align*}
\left(
\begin{matrix}
\zeta^3&0\\
0&\zeta^2
\end{matrix}
\right), \quad\left(
\begin{matrix}
0&-1\\
1&0
\end{matrix}
\right),\quad \mbox{and }\quad {1\over \sqrt{5}}\left(
\begin{matrix}
-\zeta+\zeta^4&\zeta^2-\zeta^3\\
\zeta^2-\zeta^3&\zeta-\zeta^4\\
\end{matrix}
\right),
\end{align*}
where $\zeta=\exp(2\pi\I/5)$.
A set of complete polynomial invariants is given by
\begin{align*}
&\varphi_1(q,p)=qp(q^{10}+11q^5 p^5-p^{10}),\\
&\varphi_2(q,p)=-(q^{20}+p^{20})+228 (q^{15}p^5-q^{5}p^{15})-494q^{10}p^{10},\\
&\varphi_3(q,p)=q^{30}+p^{30}+522 (q^{25}p^5-q^{5}p^{25})-10005(q^{20}p^{10}+q^{10}p^{20}).
\end{align*}
Note that $\varphi_2$ is proportional to the determinant of the Hessian of
$\varphi_1$ and that $\varphi_3$ is proportional to the Jacobian $\partial(\varphi_1,\varphi_2)/\partial(q,p)$. The invariants can be shown to satisfy the relation
$\varphi_2^3+\varphi_3^2=1728\varphi_1^5$. We notice that $1728=12^3$.
For degree reasons $\{\varphi_1,\varphi_2\}=a\varphi_3$, $\{\varphi_1,\varphi_3\}
=b\varphi_2^2$ and  $\{\varphi_2,\varphi_3\}=c\varphi_1^4$ for certain proportionality factors $a,b,c\in \C$. A tedious calculation gives $a=20$, $b=-30$ and $c=-86400$. We notice that $86400=6\cdot 120^2=6|\operatorname{BI}|^2=(\deg{\varphi_1})^2\deg{\varphi_2}\deg{\varphi_3}$.  Hence for the binary icosahedral group the Poisson algebra is $\C[x_1,x_2,x_3]/(x_2^3+x_3^2-12^3x_1^5)$ with bracket table:
\begin{align*}
\begin{tabular}{c||c|c|c}
  $\{\:,\:\}_{E_8}$   & $x_1$&$x_2$&$x_3$\\\hline\hline
$x_1$ &   0  &$(\deg x_2)x_3$&$-(\deg x_3) x_2^2$\\\hline
$x_2$ &      &$0$    & $- (\deg{x_1})^2(\deg {x_2})(\deg {x_3})x_1^4$\\\hline
$x_3$ &      &      & $0$
\end{tabular}
\end{align*}
Obviously we have $Z_{11}^1=0$. Also $\deg(\{x_2,f_{E_8}\})-\deg(f_{E_8})=18$ is not a multiple of $\deg(x_1)$ and is smaller than $\deg(x_2)=20$, so $Z_{21}^1=0$. Finally, $\deg( \{x_3,f_{E_8}\})-\deg(f_{E_8})=28<\deg(x_3)=30$ is not in the $\Z_{\ge 0}$-span of $\deg(x_1)$ and $\deg(x_2)$, and hence $Z_{31}^1=0$. We conclude that $f_{E_8}(x_1,x_2,x_3)=x_2^3+x_3^2-12^3x_1^5$ is a Casimir generator.

\subsubsection{Symplectic quotient of the circle action with weight vector $(-1,1,1)$} \label{subsubsec:-1,1,1}

In this example $G$ is the complex circle $\C^\times$  and $V=\C^3$. The weight vector  of our circle action is supposed to be $(-1,1,1)$ (see also \cite{FHSSigma}). Identifying $\mathfrak g$ with $\C$, the corresponding moment map is
\[J(q_1,q_2,q_3,p_1,p_2,p_3)=-q_1p_1+q_2p_2+q_3p_3\in\C[q_1,q_2,q_3,p_1,p_2,p_3].\]
Since $\mathfrak g$ is abelian $J(q_1,q_2,q_3,p_1,p_2,p_3)$ is $G$-invariant.
A complete system of polynomial $G$-invariants is given by the quadratic polynomials
\begin{align*}
&\varphi_0 =q_1p_1, \quad \varphi_1 =q_2p_2, \quad \varphi_2 =q_3p_3, \quad \varphi_3 =q_1q_2, \quad\varphi_4 =p_1p_2, \\
&\varphi_5 =q_1q_3,  \quad\varphi_6 =p_1p_3,  \quad\varphi_7 =q_2p_3, \quad \varphi_8 =q_3p_2.
\end{align*}
The condition $J=0$ introduces the linear relation $\varphi_0=\varphi_1+\varphi_2$.  Moreover, the invariants restricted to $N$ satisfy the nine degree four relations
\begin{align*}
&f_1(\bs\varphi) =\varphi_3\varphi_6 -\varphi_1\varphi_7 -\varphi_2\varphi_7=0, \quad
f_2 (\bs\varphi)=\varphi_1\varphi_6 -\varphi_4\varphi_7=0,  \quad
f_3 (\bs\varphi)=\varphi_4\varphi_5 -\varphi_1\varphi_8 -\varphi_2\varphi_8=0, \\
&f_4 (\bs\varphi)=\varphi_1\varphi_5 -\varphi_3\varphi_8=0,  \quad
f_5 (\bs\varphi)=\varphi_2\varphi_4 -\varphi_6\varphi_8=0,  \quad
f_6 (\bs\varphi)=\varphi_2\varphi_3 -\varphi_5\varphi_7=0,\\
&f_7 (\bs\varphi)=\varphi_2^2 -\varphi_5\varphi_6 +\varphi_7\varphi_8=0,  \quad
f_8 (\bs\varphi)=\varphi_1\varphi_2 -\varphi_7\varphi_8=0,  \quad
f_9 (\bs\varphi)=\varphi_1^2 -\varphi_3\varphi_4 +\varphi_7\varphi_8=0.
\end{align*}
Hence the Poisson algebra of the symplectic quotient $N/\!\!/G$ is $S:=\C[\bs x]:=\C[x_i\mid 1\le i\le 8]$ modulo the ideal $I=(f_\mu(\bs x)\mid 1\le\mu\le 9)$. The internal degrees are all $\deg x_i=\deg\varphi_i=2$. The table of Poisson brackets is worked out in Table \ref{tab:brackettable-1,1,1}.
\begin{table}[h!]
\begin{align*}
\begin{tabular}{c||c|c|c|c|c|c|c|c}
  $\{\:,\:\}$   & $x_1$ & $x_2$& $x_3$ &    $x_4$ &  $x_5$ &$x_6$ & $x_7$ & $x_8$\\\hline\hline
  $x_1$         & $0$   &   $0$& $-x_3$&    $x_4$&  $0$   &$0$ & $-x_7$&   $x_8$\\\hline
  $x_2$         &       &   $0$&    $0$&       $0$&  $-x_5$   &  $x_6$ &$x_7$& $-x_8$\\\hline
  $x_3$         &       &      &    $0$&$2x_1+x_2$&  $0$   &$x_7$ &$0  $& $x_5$\\\hline
  $x_4$         &       &      &       &    $0$&   $-x_8$&$0 $ &  $-x_6$& $0$\\\hline
  $x_5$         &       &      &       &       &   $0$&  $x_1+2x_2$ &$x_3$&   $0$\\\hline
  $x_6$         &       &      &       &       &      &  $0$       &$0$& $-x_4$\\\hline
  $x_7$         &       &      &       &       &      &            &$0$& $-x_1+x_2$\\\hline
  $x_8$         &       &      &       &       &      &            &   & $0$
\end{tabular}
\end{align*}
\caption{Poisson brackets of the symplectic circle quotient with weight vector $(-1,1,1)$}
\label{tab:brackettable-1,1,1}
\end{table}
It turns out that the nine generators of the ideal $I$ are not Casimirs. The  Poisson relations of the generators  are depicted in Table \ref{tab:Poissonrelations-1,1,1}.
\begin{table}[h!]
\begin{align*}
\begin{tabular}{c||c|c|c|c|c|c|c|c|c}
 $\{\:,\:\}$   & $f_1$ & $f_2$& $f_3$ &    $f_4$ &  $f_5$ &$f_6$ & $f_7$ & $f_8$&$f_9$\\\hline\hline
 $x_1$         & $-f_1$   &   $0$& $f_3$&    $0$&  $f_5$   &$-f_6$ & $0$&   $0$&$0$\\\hline
 $x_2$         & $f_1$   &   $f_2$& $-f_3$&    $-f_4$&  $0$   &$0$ & $0$&   $0$&$0$\\\hline
 $x_3$         & $0$   & $f_1$ &$f_4$&  $0$  &   $f_7+2f_8$  &$0$ &$0$ & $f_6$  &$-f_6$\\\hline
 $x_4$         & $-f_2$   & $0$ &$0$&  $-f_3$  &   $0$  &$-f_7-2f_8$ &$0$ & $-f_5$  &$f_5$\\\hline
 $x_5$         & $f_6$   & $2f_8+f_9$ &$0$&  $0$  &   $f_3$  &$0$ &$-f_4$ & $f_4$  &$0$\\\hline
 $x_6$         & $0$   & $0$ &$-f_5$&  $-2f_8-f_9$  &   $0$  &$-f_1$ &$f_2$ & $-f_2$  &$0$\\\hline
 $x_7$         & $0$   & $0$ &$-f_7+f_9$&  $-f_6$  &   $f_2$  &$0$ &$f_1$ & $0$  &$-f_1$\\\hline
 $x_8$         & $f_7-f_9$   & $f_5$ &$0$&  $0$  &   $0$  &$-f_4$ &$-f_3$ & $0$  &$f_3$
\end{tabular}
\end{align*}
\caption{Poisson relations for the symplectic circle quotient with weight vector $(-1,1,1)$}
\label{tab:Poissonrelations-1,1,1}
\end{table}
We observe that the $Z_{i\mu}^\nu$ are constant and hence $\deg\left(\mathcal A_{\mu\nu}^\lambda\right)=2<4$ (cf.  Equation \eqref{eq:Amunu}). We found nonzero components in the tensor $(\mathcal A_{\mu\nu}^\lambda)_{\mu,\nu,\lambda}$. Closer inspection reveals that $\dP Z-[Z,Z]=0$ (cf. Proposition \ref{prop:deg-1} below). An explanation for this is that $I\subset S$ can be seen as an example of the type discussed in  Subsection \ref{subsec:deg-1} (see Proposition \ref{prop:deg-1}). For this interpretation, however, we have to divide $\deg(x_i)$, $\deg(f_\mu)$, and $\deg(\{\:,\:\})$ by $2$. If we interpret accordingly $\deg(f_\mu)$ as degree $2$ we observe that $A=S/I$ is a Koszul algebra.
This follows from the observation that $f_1,\dots,f_9$ forms a Groebner basis in the graded reverse lexicographic term order.

%This example the simplest case of a stable circle representation whose symplectic quotient is not a Kleinian singularity of type $A_m$. For the circle action with weight vector $(-2,1,1)$  things are already quite involved and we cannot present the calculations in all detail. In the graded reversed lexicographic ordering there are on-shell $4$ invariants in degree $2$ and $6$ in degree $3$ satisfying in total $24$ relations, of them $1$ of degree $4$, $8$ of degree $5$, $11$ of degree $6$ and $4$ in degree $9$. For a specific choice of $Z_{i\mu}^\nu$ we found nonzero components in the tensors $(\mathcal A_{\mu\nu}^\lambda)_{\mu,\nu,\lambda}$ and $\dP Z-[Z,Z]$.

\subsubsection{Two particles in dimension three of zero total angular momentum}\label{subsubsec:angmom}

Let $V=\C^3\oplus\C^3$ and let $\bs q_1=(q_{11},q_{12},q_{13})$ be linear coordinates for the first copy of $\C^3$ in $V$ and $\bs q_2=(q_{21},q_{22},q_{23})$ linear coordinates for the second copy. We let the orthogonal group $G:=\mathrm O_3$ act diagonally on $V$. After identifying $V\times V^*=\C^3\oplus\C^3\oplus(\C^3)^*\oplus(\C^3)^*$ with $\C^3\oplus\C^3\oplus\C^3\oplus\C^3$ the cotangent lifted action is in fact the diagonal action.
We write $\bs p_1=(p_{11},p_{12},p_{13})$ for the linear coordinates of the third copy of $\C^3$ in $V\times V^*$ and $\bs p_2=(p_{21},p_{22},p_{23})$ for the linear coordinates of the fourth copy. A complete set of polynomial $G$-invariants is given by
\begin{align*}
    \varphi_1     &=  \langle \bs q_1,\bs q_1 \rangle,
    &
    \varphi_2     &=  \langle \bs q_1, \bs q_2 \rangle,
    &
    \varphi_3     &=  \langle \bs q_2, \bs q_2 \rangle,
    &
    \varphi_4     &=  \langle \bs q_1, \bs p_1 \rangle,
    &
    \varphi_5     &=  \langle \bs q_1, \bs p_2 \rangle,
    \\
    \varphi_6     &=  \langle \bs q_2, \bs p_1 \rangle,
    &
    \varphi_7     &=  \langle \bs q_2, \bs p_2 \rangle,
    &
    \varphi_8     &=  \langle \bs p_1,\bs  p_1 \rangle,
    &
    \varphi_9     &=  \langle \bs p_1, \bs p_2 \rangle,
    &
    \varphi_{10}  &=  \langle \bs p_2, \bs p_2 \rangle,
\end{align*}
where $\langle\:,\:\rangle$ denotes the Euclidean inner product on $\C^3$. The Poisson bracket is the canonical bracket $\{q_{mi},p_{lj}\}=\delta_{mn}\delta_{ij}$.
Then moment map is given by $J(\bs q_1,\bs p_1,\bs q_2,\bs p_2)=\bs q_1\wedge \bs p_1+\bs q_2\wedge \bs p_2$, where we use the identification of $\mathfrak o_3^*$ with $\wedge^2 \C^3$. Sending
$x_i\mapsto {\varphi_i}_{|N}$ we identify the Poisson algebra $\C[N\git G]$ with $\bs k[x_1,x_2,\dots,x_{10}]/I$, where $I$ is the Poisson ideal generated by
\begin{align*}
    f_1     &=   - x_4 x_9 + x_5 x_8 - x_6 x_{10} + x_7 x_9,
    \quad
    f_2     =   - x_2 x_9 - x_3 x_{10} + x_5 x_6 + x_7 x_7,
    \\
    f_3     &=   - x_2 x_8 - x_3 x_9 + x_4 x_6 + x_6 x_7,
   \quad
    f_4     =   - x_1 x_9 - x_2 x_{10} + x_4 x_5 + x_5 x_7,
    \\
    f_5     &=   - x_1 x_8 + x_3 x_{10} + x_4 x_4 - x_7 x_7,
    \quad
    f_6     =   - x_1 x_6 + x_2 x_4 - x_2 x_7 + x_3 x_5,
    \\
    f_7     &=   - x_3 x_8 x_{10} + x_3 x_9 x_9 + x_6 x_6 x_{10} - 2 x_6 x_7 x_9 + x_7 x_7 x_8,
    \\
    f_8     &=  x_2 x_6 x_{10} - 2 x_2 x_7 x_9 - x_3 x_4 x_{10} + x_3 x_5 x_9 - x_3 x_7 x_{10} + x_4 x_7 x_7 + x_7 x_7 x_7,
    \\
    f_9     &=  x_1 x_6 x_6 - x_2 x_2 x_8 - 2 x_2 x_3 x_9 + 2x_2 x_6 x_7 - x_3 x_3 x_{10} + x_3 x_7 x_7,
    \\
    f_{10}  &=   - x_1 x_3 x_{10} + x_1 x_7 x_7 + x_2 x_2 x_{10} - 2 x_2 x_5 x_7 + x_3 x_5 x_5,
    \\
    f_{11}  &=  x_1 x_3 x_6 x_{10} - x_1 x_6 x_7 x_7 - x_2 x_2 x_6 x_{10} + 2 x_2 x_2 x_7 x_9 - x_2 x_3 x_5 x_9
            \\&\quad
                + 2 x_2 x_3 x_7 x_{10} - 2 x_2 x_7 x_7 x_7 - x_3 x_3 x_5 x_{10} + x_3 x_5 x_7 x_7.
\end{align*}
For more detail the reader may consult \cite{CHS}.
For lack of space we will note spell out here the tensors $Z_{i\mu}^\nu$, $\mathcal A_{\mu\nu}^\lambda$ and $\dP Z-[Z,Z]$.

\subsection{Brackets of degree $-1$} \label{subsec:deg-1}
Brackets of degree $-1$ are deduced from the so-called \emph{linear Poisson} structures.
Let us for simplicity restrict the discussion to the case $\bs k=\C$.
Let $V$ be a finite dimensional $\C$-vector space and assign to each element in $V^*$ the internal degree $1$. A Poisson bracket of degree $-1$ on the algebra $S=\C[V]$ is nothing other than a $\C$-linear Lie algebra structure on $V^*$. Let us write for this Lie algebra $\mathfrak g$ so that $V=\mathfrak g^*$. Let $G$ be the connected, simply connected Lie group associated to $\mathfrak g$. Now we observe that an ideal $I\in S$ is a Poisson ideal if and only if the fundamental vector fields $X_{\mathfrak g^*}$ of the coadjoint $G$-action preserve the ideal $I$, i.e., $X_{\mathfrak g^*}I\subseteq I$ for all $X\in \mathfrak g$. This means that $I$ is the ideal of the Zariski closure $\overline{GW}$ of the $G$-saturation $GW$ of a subset $W\subseteq \mathfrak g^*$. For aesthetic reasons we would like to restrict the attention to ideals $I$ generated by homogeneous polynomials. Notice that if $W$ is conical, i.e., $tW\subseteq W$ for all $t\in \C^\times$, then $I$ will have this property. Another important class of examples is provided by nilpotent orbit closures (see e.g. \cite{Collingwood}).

One way to construct concrete examples is to decompose the $\mathfrak g$-module $\C[\mathfrak g^*]$ into irreducible $\mathfrak g$-modules. Knowing explicit bases for the irreducible components one can form finite-dimensional $\mathfrak g$-submodules $U\subset S$. Their ideals $I_U=\{f\in S\mid f=\sum_\mu a_\mu f_\mu,\; a_\mu\in S, f_\mu\in U\}$ are clearly Poisson. We emphasize that for the situation described above the tensors $Z_{i\mu}^\nu$ can be chosen to be $-1$ times the representation matrices of the $\mathfrak g$-module $U$. In particular, in this case the $Z_{i\mu}^\nu$
are \emph{constant} and as such Casimirs in $S$. In principle one can also consider the categorical quotient $U^{-1}(0)/\!\!/G$, i.e., $\Spec(S^G/I_U\cap S^G)$. The Poisson bracket on $\Spec( S/I_U)$ descends to a Poisson bracket on $U^{-1}(0)/\!\!/G$ of degree $-1$. For lack of space we refrain from elaborating examples of this type.

\begin{proposition} \label{prop:deg-1} In the situation described above we have
\begin{enumerate}
\item \label{item:MClinear} $\dP Z-[Z,Z]=0$,
\item $I_U$ is generated by Casimirs if and only if $U\subset S^G$,
\item \label{item:degAmunulambda} if  $U\subseteq S_m$ then $\deg\left(\mathcal A_{\mu\nu}^\lambda\right)=m-1$, and hence a nonzero $\mathcal A_{\mu\nu}^\lambda$ cannot be in $I_U$.
\end{enumerate}
\end{proposition}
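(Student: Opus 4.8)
The plan is to prove the three items in the order listed, exploiting at each stage two structural facts: the coadjoint $\mathfrak g$-action on $S=\bs k[\mathfrak g^*]$ preserves the internal grading (because $\{\:,\:\}$ has internal degree $-1$), and the matrices $Z_i$ may be taken \emph{constant}. Fix a basis $\xi_1,\dots,\xi_n$ of $\mathfrak g=V^*$ with $[\xi_i,\xi_j]=\sum_l c_{ij}^l\xi_l$ and write $x_i$ for the dual linear coordinate, so that $\Lambda_{ij}=\{x_i,x_j\}=\sum_l c_{ij}^l x_l$ and $\xi_i\cdot f=\pm\{x_i,f\}$; let $f_1,\dots,f_k$ be a basis of $U$, let $\rho\co\mathfrak g\to\mathfrak{gl}_k$ be the resulting representation, and take $Z_i=-\rho(\xi_i)$, so that $\{x_i,f_\mu\}=\sum_\nu Z_{i\mu}^\nu f_\nu$.

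For the first item I would compute the two terms in \eqref{eq:MC} by hand. Since $\dP$ on $\mathrm C^\bullet_{\operatorname{Poiss}}(S)$ is the Schouten bracket against $\pi=\frac12\sum_{i,j}\Lambda_{ij}\,\partial/\partial x_i\wedge\partial/\partial x_j$, applying it to the \emph{constant} field $\partial/\partial x_i$ merely differentiates the coefficients of $\pi$, producing an expression quadratic in the $\partial/\partial x_j$ with coefficients $\partial\Lambda_{jl}/\partial x_i=c_{jl}^i$; hence $\dP Z=\sum_i(\dP\,\partial/\partial x_i)\otimes Z_i$ is, up to the normalization of Appendix~\ref{ap:Poissoncohomology}, a combination of the terms $(\partial/\partial x_i\wedge\partial/\partial x_j)\otimes Z_l$ with coefficients the structure constants $c_{ij}^l$. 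On the other side, the dg Lie bracket on $\mathrm C^\bullet_{\operatorname{Poiss}}(S)\otimes_S\mathfrak{gl}_k(S)$ gives $[Z,Z]=\sum_{i,j}(\partial/\partial x_i\wedge\partial/\partial x_j)\otimes[Z_i,Z_j]$, and because $\rho$ is a Lie-algebra homomorphism, $[Z_i,Z_j]=\rho([\xi_i,\xi_j])=\sum_l c_{ij}^l\rho(\xi_l)=-\sum_l c_{ij}^l Z_l$, so $[Z,Z]$ is a combination of the same terms with coefficients again the $c_{ij}^l$. Matching the two expressions gives $\dP Z-[Z,Z]=0$; conceptually this says the curvature of the connection whose Christoffel symbols are the $Z_{i\mu}^\nu$ (Section~\ref{sec:connection}) vanishes, which is precisely the Jacobi identity of $\mathfrak g$. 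The one delicate point, and the step I expect to cause the most trouble, is purely bookkeeping: getting the signs in $\dP$ and the symmetrization factor in $[Z,Z]$ aligned so that the two quadratic expressions coincide on the nose.

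The third item is a degree count. If $U\subseteq S_m$ (with $m\ge 1$; for $m=0$ the claim is vacuous), each $f_\mu$ is homogeneous of degree $m$, so $\partial f_\mu/\partial x_i\in S_{m-1}$ since $\deg x_i=1$, while $Z_{i\nu}^\lambda\in\bs k=S_0$; by \eqref{eq:Amunu} therefore $\mathcal A_{\mu\nu}^\lambda\in S_{m-1}$, i.e.\ it is homogeneous of degree $m-1$ whenever nonzero. But $I_U=(f_1,\dots,f_k)$ is generated in internal degree $m$, so $(I_U)_e=0$ for all $e<m$; in particular $(I_U)_{m-1}=0$, and a nonzero $\mathcal A_{\mu\nu}^\lambda$ cannot lie in $I_U$.

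Finally, for the claim that $I_U$ is generated by Casimirs if and only if $U\subseteq S^G$: if $U\subseteq S^G$ then $\{x_i,f_\mu\}=\pm\xi_i\cdot f_\mu=0$ for all $i,\mu$, and since the $x_i$ generate $S$ the Leibniz rule yields $\{S,f_\mu\}=0$, so the generators $f_1,\dots,f_k$ of $I_U$ are Casimirs. Conversely, suppose $I_U$ admits a generating set of Casimirs; because the $\mathfrak g$-action is homogeneous, the graded components of a Casimir are again Casimirs, so we may take the generators homogeneous. Using that $U$ lies in a single internal degree $m$, the ideal $I_U$ is generated in degree $m$ with $(I_U)_m=U$, so the degree-$m$ members of this homogeneous Casimir generating set already span $(I_U)_m=U$; hence $U$ is spanned by Casimirs, i.e.\ $U\subseteq S^G$.
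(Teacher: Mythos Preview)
Your proof is correct and follows essentially the same approach as the paper. For item~(1) both arguments exploit that the $Z_{i\mu}^\nu$ are constants (hence Casimirs) together with $[Z_i,Z_j]=-\sum_l c_{ij}^l Z_l$; the paper is slightly more direct, plugging into formula~\eqref{eq:Poiss2coboundary} so that the two bracket terms $\{x_i,Z_{j\mu}^\nu\}$ and $\{x_j,Z_{i\mu}^\nu\}$ vanish outright, leaving $-\sum_k c_{ij}^k Z_{k\mu}^\nu=[Z_i,Z_j]_\mu^\nu$. For items~(2) and~(3) the paper simply says ``the other statements are obvious,'' so your degree count for~(3) is exactly what is intended.

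One remark on your reading of item~(2): you interpret ``generated by Casimirs'' as the existence of \emph{some} Casimir generating set, and then (correctly) invoke the single-degree hypothesis $U\subseteq S_m$ for the converse. The paper's offhand treatment suggests the intended meaning is simply that the chosen basis $f_1,\dots,f_k$ of $U$ consists of Casimirs, which is immediately equivalent to $U\subset S^G$. Under your stronger reading the extra hypothesis is genuinely needed: for the Heisenberg bracket $\{x_1,x_2\}=x_3$, the submodule $U=\Span(x_3,\,x_1x_3,\,x_3^2)$ has $I_U=(x_3)$ generated by the Casimir $x_3$, yet $x_1x_3\notin S^G$. So your caution there is well placed.
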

\begin{proof}
The fundamental vector fields of the coadjoint $\mathfrak g$-action are given by $-\{x_i,\:\}=-\sum_{j,k} c_{ij}^k x_k\partial /\partial x _j$. Here $c_{ij}^k$ are the structure constants of the Lie algebra in the basis $e_1,\dots,e_n$ of $\mathfrak g$ corresponding to the choice of linear coordinates  $x_1,\dots,x_n$ for $\mathfrak g^*$, i.e.,
$[e_i,e_j]=\sum_k c_{ij}^k e_k$.
To show \eqref{item:MClinear} note that, since $Z_{i\mu}^\nu$ are Casimir, we have
\begin{align*}
\dP\left(\sum_k Z_{k\mu}^\nu{\partial\over\partial x_k}\right)(\dR x_i,\dR x_j)=-\sum_{k} Z_{k\mu} {
\partial \Lambda_{ij}
\over \partial x_k}=-\sum_{k} c_{ij}^k Z_{k\mu}^\nu=[Z_i,Z_j]_\mu^\nu.
\end{align*}
The other statements are obvious.
\end{proof}

\subsubsection{Harmonic polynomials in $3$ variables}\label{subsubsec:harmonic} Let us consider $\mathfrak g=\mathfrak{so}_3$. Note that the adjoint representation is isomorphic to the dual of the standard representation $\C^3$.
Hence $S=\C[\mathfrak{so}_3^*]=\C[x_1,x_2,x_3]$ with the standard grading and Poisson bracket
\[\{x_1,x_2\}=x_3, \quad \{x_2,x_3\}=x_1,\quad \{x_3,x_1\}=x_2.\]
The subspaces of $S$ of harmonic polynomials of degree $\ell$
\[\mathcal H_\ell:=\{f\in S\mid \Delta f=0,\;\deg(f)=\ell\}\]
form the irreducible components in the decomposition $S=\oplus_{\ell\ge 0}\mathcal H_\ell$ of the $\mathfrak{so}_3$-module $S$. Here $\Delta=\sum_i(\partial/\partial x_i)^2$ is the Laplacian.
The simplest nontrivial case is that of degree $2$. Here $\mathcal H_2$ is the $\C$-span of
\begin{align*}
f_1=x_1x_2,\quad f_2=x_1x_3,\quad f_3=x_2x_3,\quad f_4=x_1^2-x_2^2,\quad f_5=x_1^2-x_3^2.
\end{align*}
We can easily work out the bracket table:
\begin{align*}
\begin{tabular}{c||c|c|c|c|c}
 $\{\:,\:\}$   & $f_1$ & $f_2$& $f_3$ &    $f_4$ &  $f_5$ \\\hline\hline
 $x_1$         & $f_2$   &   $-f_1$& $f_4-f_5$&    $-2f_3$&  $2f_3$   \\\hline
 $x_2$         & $-f_3$  &   $f_5$&   $f_1$&    $-2f_1$& $-4f­_2$ \\\hline
 $x_3$         & $-f_4$   & $f_3$ &$-f_2$&$4f_1$&2$f_1$
\end{tabular}
\end{align*}
None of the generators $f_\mu$ are Casimirs. The Hilbert series of $A$ is $(1-5t^2+5t^3-5t^5)/(1-t)^3=1+3t+t^2$. This means that $f_1,\dots, f_5$ is not a complete intersection and $A$ is actually a finite dimensional algebra. However, as $f_1,\dots,f_5$ forms a Groebner basis for $I$ (e.g. in the graded reverse lexicographic term order), $A=S/I$ is a Koszul algebra. We found nonzero components in the tensors $(\mathcal A_{\mu\nu}^\lambda)_{\mu,\nu,\lambda}$.

%\begin{align*}
%{\partial f_\mu\over \partial x_i}&=\delta_\mu^1( \delta_1^ix_2+\delta_1^ix_1)+\delta_\mu^2( \delta_1^ix_3+\delta_3^ix_1)+\delta_\mu^3( \delta_2^ix_3+\delta_3^ix_2)+\delta_\mu^4( 2\delta_1^ix_1-2\delta_2^ix_2)+\delta_\mu^5( 2\delta_1^ix_1-2\delta_3^ix_3)\\
%Z_{i\nu}^\lambda&=\delta^1_i(\delta_\nu^1\delta_2^\lambda-\delta_\nu^2\delta_1^\lambda+\delta_\nu^3(\delta_4^\lambda-\delta_5^\lambda)-2\delta_\nu^4\delta_3^\lambda+2\delta_\nu^5\delta_3^\lambda)
%+\delta^2_i(-\delta_\nu^1\delta_3^\lambda+\delta_\nu^2\delta_5^\lambda +\delta_\nu^3\delta_1^\lambda-2\delta_\nu^4\delta_1^\lambda)\\
%&\hspace{2cm}+\delta^3_i(-\delta_\nu^1\delta_4^\lambda+\delta_\nu^2\delta_3^\lambda-\delta_\nu^3\delta_2^\lambda+4\delta_\nu^4\delta_1^\lambda+2\delta_\nu^5\delta_1^\lambda)\\
%&\sum_i{\partial f_\mu\over \partial x_i}Z_{i\nu}^\lambda=???
%\hspace{2cm}\mathcal A^\lambda_{\mu\nu}=\sum_i{\partial f_\mu\over \partial x_i}Z_{i\nu}^\lambda+{\partial f_\nu\over \partial x_i}Z_{i\mu}^\lambda=???
%\\
%\end{align*}
%The latter expression is the 2xSYMMETRIZATION of the first.
%I SUGGEST TO WRITE $\mathcal A^1=\mathcal A^1_{\mu\nu}$ as a symmetric matrix, similarly, $\mathcal A^2=\mathcal A^2_{\mu\nu}$ etc.

\subsubsection{$\ell\times \ell$-minors of a generic $m\times m$-matrix}\label{subsubsec:detideal}
Suppose $S=\bs k[x_{ij}|1\le i,j\le m]$ and $U$ is the vector subspace of $S$ generated by the $\ell\times \ell$-minors. We denote by $I$ the ideal generated by $U$. The spectrum of $S/I$ is the determinantal variety of $m\times m$ matrices that have rank $\le \ell-1$. The ideal $I$ is stable under the action of $\mathfrak g=\mathfrak{gl}_m$.
Using the coordinates $x_{ij}$ on $\mathfrak{gl}_m^*$ the commutation relations are
\[\{x_{ij},x_{kl}\}=\delta_{jk}x_{il}-\delta_{li}x_{kj}.\]
To be more specific we take a look into the case $\ell=2$ and $m=3$. We have nine $2\times 2$-minors
in nine variables $x_{ij}$:
\begin{align*}
f_{11}=x_{22}x_{33}-x_{23}x_{32},\quad f_{12}=x_{21}x_{33}-x_{23}x_{31}, \quad f_{13}=x_{21}x_{32}-x_{22}x_{31},\\
f_{21}=x_{12}x_{33}-x_{13}x_{32},\quad  f_{22}=x_{11}x_{33}-x_{13}x_{31},\quad f_{23}=x_{11}x_{32}-x_{12}x_{31},\\
f_{31}=x_{12}x_{23}-x_{13}x_{22},\quad f_{32}=x_{11}x_{23}-x_{13}x_{21},\quad f_{33}=x_{11}x_{22}-x_{12}x_{21}.
\end{align*}
We can check by hand that $I$ is preserved be the $\mathfrak{gl}_3$-action by writing
\begin{align*}
&\{x_{rs},x_{il}x_{jk}-x_{ik}x_{jl}\}=(x_{rl}\delta_{si}-x_{is}\delta_{rl})x_{jk}+(x_{rk}\delta_{sj}-x_{js}\delta_{rk})x_{il}-(x_{rk}\delta_{si}-x_{is}\delta_{rk})x_{jl}-(x_{rl}\delta_{sj}-x_{js}\delta_{rl})x_{ik}\\
&=\delta_{si}(x_{rl}x_{jk}-x_{rk}x_{jl})-\delta_{rl}(x_{is}x_{jk}-x_{ik}x_{js})+\delta_{sj}(x_{rk}x_{il}-x_{rl}x_{ik})-\delta_{rk}(x_{js}x_{il}-x_{is}x_{jl}).
\end{align*}
We conclude that $A=\bs k[x_{ij}|1\le i,j\le 3]/(f_{ij}|1\le i,j\le 3)$ is a Poisson algebra and notice that none of the $f_{ij}$ are Casimirs. The ideal $I$ is not a complete intersection. However, as $A$ is a quadratic extremal Gorenstein algebra it is a Koszul algebra \cite{Froeberg,polishchuk2005quadratic}. We found nonzero components in the tensors $(\mathcal A_{\mu\nu}^\lambda)_{\mu,\nu,\lambda}$. We expect $A/\operatorname{tr}$
to be isomorphic to the Poisson algebra of Subsection \ref{subsubsec:-1,1,1} but have not been able to work out the concrete isomorphism. Here $\operatorname{tr}=\sum_{i=1}^3 x_{ii}$ is a Casimir obtained by taking the trace of the generic  $3\times 3$-matrix $(x_{ij})$.

%************* degree >=0 *******************
\subsection{Brackets of degree  $\ge 0$}\label{subsec:brackets of degree >=0}

\subsubsection{Diagonal Poisson bracket}\label{subsec:diag}
\begin{lemma} Let $S=\bs k[x_1,x_2,\dots,x_n]$ be the polynomial algebra with $\deg(x_i)=1$ for all $i$. For an antisymmetric $n\times n$-matrix $(c_{ij})_{i,j=1,\dots,n}$ consider the diagonal Poisson bracket
\[\{x_i,x_j\}:=c_{ij}x_ix_j,\quad i,j=1,\dots n.\]
Then for any monomial  $f=\prod_{j=1}^n x_j^{m_j}$ we have $\{x_i,f\}=\sum_{j=1}^{n}c_{ij}m_jx_i f$.
\end{lemma}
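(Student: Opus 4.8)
The plan is to reduce everything to the Leibniz rule $\{x_i, bc\} = b\{x_i,c\} + \{x_i,b\}c$ satisfied by the Poisson bracket (equivalently, to the fact that $\{x_i,\:\}$ is a derivation of $S$), applied one variable at a time. The entire argument is a routine bookkeeping computation; the only ingredients beyond the Leibniz rule are commutativity of $S$ and the antisymmetry $c_{ii}=0$.

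First I would establish the single-variable case: for each $j$ and each $m\ge 0$,
\[
\{x_i, x_j^m\} = m\, c_{ij}\, x_i\, x_j^m.
\]
This follows by induction on $m$. The base case $m=0$ is trivial since $\{x_i,1\} = \{x_i,1\cdot 1\} = 2\{x_i,1\}$ forces $\{x_i,1\}=0$. For the inductive step, write $x_j^{m+1}=x_j\cdot x_j^m$ and apply the Leibniz rule:
\[
\{x_i, x_j^{m+1}\} = x_j\{x_i, x_j^m\} + \{x_i, x_j\}\,x_j^m = x_j\cdot m c_{ij} x_i x_j^m + c_{ij} x_i x_j\cdot x_j^m = (m+1)\,c_{ij}\,x_i\,x_j^{m+1},
\]
using commutativity of $S$ to collect the factors. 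Note that for $j=i$ antisymmetry gives $c_{ii}=0$, so the identity reads $\{x_i,x_i^m\}=0$, consistent with the formula.

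Next I would treat a general monomial $f=\prod_{j=1}^n x_j^{m_j}$ by iterating the Leibniz rule across the $n$ factors. Writing $f = x_1^{m_1}\cdots x_n^{m_n}$ and applying the derivation $\{x_i,\:\}$ factor by factor yields
\[
\{x_i, f\} = \sum_{j=1}^n \Big(\prod_{l<j} x_l^{m_l}\Big)\,\{x_i, x_j^{m_j}\}\,\Big(\prod_{l>j} x_l^{m_l}\Big).
\]
Substituting the single-variable identity $\{x_i, x_j^{m_j}\} = m_j c_{ij} x_i x_j^{m_j}$ and again using commutativity to reassemble $\prod_l x_l^{m_l}=f$, each summand becomes $m_j c_{ij} x_i f$, and summing over $j$ gives the claimed formula $\{x_i, f\} = \sum_{j=1}^{n} c_{ij} m_j x_i f$.

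There is no serious obstacle. The only points requiring (minimal) care are the indexing in the iterated Leibniz rule and the observation that commutativity of $S$ permits the free rearrangement of monomial factors at each stage. One could alternatively phrase the whole proof as a single induction on the total degree $\sum_j m_j$ of $f$, which dispenses with the explicit split products above at the cost of an extra case distinction; I would present whichever version reads more cleanly in context.
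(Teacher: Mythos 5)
Your proof is correct: the paper states this lemma without proof, treating it as the routine consequence of the fact that $\{x_i,\:\}$ is a derivation, and your induction on single-variable powers followed by the iterated Leibniz rule is exactly that intended computation. Nothing is missing — note only that antisymmetry $c_{ii}=0$ is needed just for consistency of the bracket's definition, not for your derivation itself.
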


Let us introduce an antisymmetric bilinear form $(\:,\:)$ on $\bs k^n$ by $(e_i,e_j):=c_{ij}$ for $i,j=1,\dots,n$, where $e_1,\dots, e_n$ denotes the standard basis. Also for a monomial $f=\prod_{j=1}^n x_j^{m_j}$ we use the shorthand $\bs x^{\bs m}$, where $\bs m=(m_1,\dots,m_n)\in \bs k^n$ is the vector of exponents of $f$.

\begin{proposition} \label{prop:diag} Let $(S,\{\:,\:\})$ be the  Poisson algebra of the previous lemma.
Then any monomial ideal $I=(f_1,f_2,\dots,f_k)$ is a Poisson ideal. Writing $f_\mu=\bs x^{\bs m_\mu}$ with $\bs m_\mu\in \bs k^n$ the tensors $Z_{i\mu}^\nu$ in
$\{x_i,f_\mu\}=\sum_\nu Z_{i\mu}^\nu f_\nu$ can be chosen to be $Z_{i\mu}^\nu =(e_i,\bs m_\mu)x_i\delta_\mu^\nu$. As a consequence the tensors $\mathcal A_{\mu \nu}^\lambda$ (cf. Equation \eqref{eq:Amunu}) can be written as
\begin{align*}
\mathcal A_{\mu \nu}^\lambda =(\bs m_\mu,\bs m_\nu)(f_\mu\delta_{\nu}^\lambda-f_\nu\delta_{\mu}^\lambda).
\end{align*}
Moreover, $\dP Z=0=[Z,Z]$.
\end{proposition}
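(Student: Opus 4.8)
The plan is to verify each of the three assertions by direct computation, relying on the diagonal structure of the bracket and the fact that the exponent vectors $\bs m_\mu$ encode everything. First I would establish that $Z_{i\mu}^\nu = (e_i,\bs m_\mu)\,x_i\,\delta_\mu^\nu$ satisfies \eqref{eq:theZs}: by the previous lemma $\{x_i,f_\mu\} = \{x_i,\bs x^{\bs m_\mu}\} = \big(\sum_j c_{ij}(m_\mu)_j\big)x_i f_\mu = (e_i,\bs m_\mu)\,x_i\,f_\mu$, and the proposed $Z$ contracts with the generators to give exactly $\sum_\nu Z_{i\mu}^\nu f_\nu = (e_i,\bs m_\mu)\,x_i\,f_\mu$. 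So the first claim is immediate. (One should note that this is a legitimate choice even though the $f_\mu$ need not form a complete intersection; we only need \emph{some} valid tensor $Z$, and this one works.)

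For the formula for $\mathcal A_{\mu\nu}^\lambda$, I would substitute into the definition \eqref{eq:Amunu}. We have $\partial f_\mu/\partial x_i = (m_\mu)_i\, \bs x^{\bs m_\mu}/x_i = (m_\mu)_i f_\mu / x_i$ (interpreting this correctly when $(m_\mu)_i = 0$, in which case the term vanishes anyway), so
\begin{align*}
\sum_{i=1}^n \frac{\partial f_\mu}{\partial x_i} Z_{i\nu}^\lambda
= \sum_{i=1}^n (m_\mu)_i \frac{f_\mu}{x_i}\,(e_i,\bs m_\nu)\,x_i\,\delta_\nu^\lambda
= \Big(\sum_{i=1}^n (m_\mu)_i (e_i,\bs m_\nu)\Big) f_\mu\,\delta_\nu^\lambda
= (\bs m_\mu,\bs m_\nu)\, f_\mu\,\delta_\nu^\lambda,
\end{align*}
using bilinearity of $(\:,\:)$ and $\bs m_\mu = \sum_i (m_\mu)_i e_i$. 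Symmetrizing in $\mu,\nu$ and using antisymmetry of the form, the second term $\sum_i (\partial f_\nu/\partial x_i) Z_{i\mu}^\lambda = (\bs m_\nu,\bs m_\mu)\,f_\nu\,\delta_\mu^\lambda = -(\bs m_\mu,\bs m_\nu)\,f_\nu\,\delta_\mu^\lambda$, which yields $\mathcal A_{\mu\nu}^\lambda = (\bs m_\mu,\bs m_\nu)\big(f_\mu\delta_\nu^\lambda - f_\nu\delta_\mu^\lambda\big)$ as claimed. The only subtlety to handle cleanly is the division-by-$x_i$ bookkeeping, which is harmless since the coefficient $(m_\mu)_i$ kills the offending terms; I would phrase it via $x_i \,\partial f_\mu/\partial x_i = (m_\mu)_i f_\mu$ to avoid fractions entirely.

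Finally, for $\dP Z = 0 = [Z,Z]$: since each $Z_{i\mu}^\nu = (e_i,\bs m_\mu) x_i \delta_\mu^\nu$ is (up to the scalar $(e_i,\bs m_\mu)$) just the monomial $x_i$ times the identity matrix in the $(\mu,\nu)$ indices, the matrices $Z_i$ and $Z_j$ are all diagonal and hence commute, giving $[Z_i,Z_j] = 0$ for all $i,j$, so $[Z,Z]=0$. For $\dP Z = 0$, I would recall from Appendix~\ref{ap:Poissoncohomology} that $\dP$ acts on $Z = \sum_i \partial/\partial x_i \otimes Z_i \in \mathrm C^1_{\operatorname{Poiss}}(S)\otimes_S \mathfrak{gl}_k(S)$ essentially by the Poisson differential applied entrywise, i.e. $(\dP Z)(\dR x_i, \dR x_j)$ involves $\{x_i, (Z_j)_{\mu}^\nu\} - \{x_j, (Z_i)_\mu^\nu\} - \sum_k (\partial \Lambda_{ij}/\partial x_k)(Z_k)_\mu^\nu$ (compare the computation in the proof of Proposition~\ref{prop:deg-1}). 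Then I would compute each piece: $\{x_i, (e_j,\bs m_\mu)x_j\} = (e_j,\bs m_\mu)\{x_i,x_j\} = (e_j,\bs m_\mu)c_{ij}x_i x_j$; antisymmetrizing in $i,j$ the difference of the first two terms is $\big((e_j,\bs m_\mu)c_{ij} - (e_i,\bs m_\mu)c_{ji}\big)x_i x_j = \big((e_j,\bs m_\mu) + (e_i,\bs m_\mu)\big)c_{ij}x_i x_j$; and the third term, since $\Lambda_{ij} = c_{ij}x_i x_j$, gives $\sum_k (\partial(c_{ij}x_ix_j)/\partial x_k)(e_k,\bs m_\mu)x_k = c_{ij}\big((e_i,\bs m_\mu) + (e_j,\bs m_\mu)\big)x_i x_j$, which exactly cancels. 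Hence $\dP Z = 0$. \textbf{Main obstacle.} No step is genuinely hard; the only thing demanding care is getting the conventions in Appendix~\ref{ap:Poissoncohomology} for $\dP$ right (signs, and whether it is the Lichnerowicz–Poisson differential acting on the $\mathfrak{gl}_k$-valued cochain) so that the cancellation in the $\dP Z$ computation is manifest — I would model the bookkeeping on the analogous display in the proof of Proposition~\ref{prop:deg-1}.
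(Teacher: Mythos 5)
Your proposal is correct and follows essentially the same route as the paper: the paper's proof likewise reduces $\dP Z=0$ to the identity $\{x_i,Z_{j\mu}^\nu\}-\{x_j,Z_{i\mu}^\nu\}-\sum_\ell Z_{\ell\mu}^\nu\,\partial\Lambda_{ij}/\partial x_\ell=0$ (which you verify explicitly) and gets $[Z,Z]=0$ from the diagonality of the matrices $Z_i$, while treating the choice of $Z_{i\mu}^\nu$ and the formula for $\mathcal A_{\mu\nu}^\lambda$ as the same direct substitutions you carry out. You simply spell out the computations the paper leaves to the reader.
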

\begin{proof} We notice that $\dP Z=0$ is equivalent to
$0=\{x_i,Z_{j\mu}^\nu\}-\{x_j,Z_{i\mu}^\nu\}-\sum_\ell Z_{\ell\mu}^\nu \partial \Lambda_{ij}/\partial x_\ell$,
which in turn can be easily verified. On the other hand $[Z,Z]$ vanishes since the matrices $Z_i$ are diagonal.
\end{proof}

We emphasize that we are not obliged to choose $Z_{i\mu}^\nu$ as in the Proposition. Actually there are examples with  choices of $Z_{i\mu}^\nu$ that make both $\mathcal A_{\mu \nu}^\lambda$ and $\dP Z-[Z,Z]$ vanish (cf. Subsection \ref{subsec:nondiag}).

\subsubsection{Brackets from the Volterra system} Let $S=\bs k[x_1,x_2,\dots,x_n]$ with $n\ge 2$ and
$\deg(x_i)=1$ for all $i=1,\dots,n$.
The following compatible Poisson brackets  $\{\:,\:\}_0$ of degree zero and $\{\:,\:\}_1$ degree one can be deduced from \cite{faddeev2007hamiltonian} (see also \cite{Damianou}). They are defined by
\begin{align*}
&\{x_i,x_{i+1}\}_0:=x_ix_{i+1},\\
&\{x_i,x_{i+1}\}_1:=x_ix_{i+1}(x_i+x_{i+1}), \qquad
\{x_i,x_{i+2}\}_1:=x_ix_{i+1}x_{i+2}.
\end{align*}
All other brackets between coordinates are understood to be zero. The bracket $\{\:,\:\}_0$ is a diagonal Poisson bracket. The bracket  $\{\:,\:\}_1$ also preserves monomial ideals. If the ideal $I$ is generated by monomials $f_1,\dots,f_k$ with $f_\mu=\prod_i x^{m_{\mu,i}}$ then for the bracket $\{\:,\:\}_1$ the we get
\begin{align*}
Z_{i\mu}^\nu&=  \big( (m_{\mu,i+1} + m_{\mu,i+2}) x_i x_{i+1} - (m_{\mu,i-1} + m_{\mu,i-2}) x_i x_{i-1}
                + (m_{\mu,i+1} - m_{\mu,i-1}) x_i^2 \big) \delta_\mu^\nu,\\
\mathcal A_{\mu \nu}^\lambda&=   \sum_{i=1}^n \Big( m_{\mu,i}
                \big( (m_{\nu,i+1} + m_{\nu,i+2}) x_{i+1} - (m_{\nu,i-1} + m_{\nu,i-2}) x_{i-1}
                + (m_{\nu,i+1} - m_{\nu,i-1}) x_i \big) \delta_\nu^\lambda f_\mu
        \\&\quad
                + m_{\nu,i}
                \big( (m_{\mu,i+1} + m_{\mu,i+2}) x_{i+1} - (m_{\mu,i-1} + m_{\mu,i-2}) x_{i-1}
                + (m_{\mu,i+1} - m_{\mu,i-1}) x_i \big) \delta_\mu^\lambda f_\nu \Big),\\
\dP Z&=0=[Z,Z].
\end{align*}
The verification of the last identity is sort of tedious.
A monomial $f$ is Casimir for $\{\:,\:\}_0$ if and only if $n$ is odd and $f$ is a power of $x_1x_3x_5\cdots x_{n-2}x_n$. The bracket $\{\:,\:\}_1$ does not have nontrivial polynomial Casimirs.

\subsection{Other brackets}

\begin{theorem}\label{thm:detbracket}
Let $\bs k$ be $\R$ or $\C$ and $S=\bs k[x_1,x_2,\dots, x_n]$. Assume that $X^1,X^2,\dots,X^{k+2}\in D_S$ are pairwise commuting derivations, $g\in S$, and $I$ is the ideal of $S$ generated by $f_1,f_2,\dots, f_k\in S$.
For $a=:f_{k+1}, b=:f_{k+2}\in S$, let
\begin{align} \label{eq:Palamodovgeneral}
\{a,b\}=g\operatorname{Det}\left(((X^\nu(f_\mu))_{\mu,\nu=1,2,\dots,k+2}\right).
\end{align}
This defines a Poisson bracket on $S$ such that $f_1,f_2,\dots, f_k$ are Casimirs.  In the graded case the degree of the bracket is $\deg(\{\:,\:\})=\sum_{\mu=1}^k\deg(f_\mu)+\sum_{\nu=1}^{k+2}\deg(X^\nu).$
\end{theorem}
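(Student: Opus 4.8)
The plan is to check, in order, that \eqref{eq:Palamodovgeneral} is $\bs k$-bilinear, skew-symmetric, and a biderivation; that $f_1,\dots,f_k$ are Casimirs; that the Jacobi identity holds; and then to count degrees. The first three points are formal consequences of the multilinearity and alternating property of the determinant. Bilinearity over $\bs k$ is clear because each $X^\nu$ is $\bs k$-linear and $\operatorname{Det}$ is multilinear in its rows. Interchanging $a=f_{k+1}$ with $b=f_{k+2}$ interchanges the last two rows of $\big(X^\nu(f_\mu)\big)_{\mu,\nu}$, so the bracket is skew-symmetric (in particular $\{a,a\}=0$). For the Leibniz rule one replaces the last row by $\big(X^\nu(bc)\big)_\nu=\big(c\,X^\nu(b)+b\,X^\nu(c)\big)_\nu$ and uses multilinearity in that row, pulling the common factors $c$ and $b$ out of a row (legitimate since $S$ is commutative), to get $\{a,bc\}=c\{a,b\}+b\{a,c\}$. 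Finally, for $1\le\mu\le k$ the matrix computing $\{f_\mu,b\}$ has row $\mu$ equal to row $k+1$, so the determinant vanishes; hence $\{f_\mu,b\}=0$ for every $b\in S$ and $f_\mu$ is a Casimir.

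The core is the Jacobi identity, and I would argue it geometrically. Form the polynomial $(k+2)$-vector field $\Pi:=g\,X^1\wedge\cdots\wedge X^{k+2}$ on $\bs k^n$, and contract it successively with the exact one-forms $\dR f_1,\dots,\dR f_k$ to obtain a bivector $\pi:=\iota_{\dR f_k}\cdots\iota_{\dR f_1}\Pi$. By the standard formula for pairing a decomposable multivector with one-forms, $\pi(\dR a,\dR b)=\pm\, g\operatorname{Det}\big((X^\nu(f_\mu))_{\mu,\nu=1}^{k+2}\big)=\pm\{a,b\}$ with a sign independent of $a,b$; so $\{\,,\,\}$ is, up to this sign, the bracket of the bivector $\pi$, and the Jacobi identity is equivalent to $[\pi,\pi]=0$ for the Schouten--Nijenhuis bracket. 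On the nonempty Zariski-open locus where $X^1,\dots,X^{k+2}$ are linearly independent, the hypothesis $[X^\nu,X^{\nu'}]=0$ allows one to invoke the simultaneous straightening (flow-box) theorem for commuting vector fields — this is where $\bs k=\R$ or $\C$ is used, so that local flows exist and are (real- or complex-)analytic — producing local coordinates $y_1,\dots,y_n$ with $X^\nu=\partial/\partial y_\nu$ for $\nu\le k+2$. In these coordinates $\Pi=g\,\partial_{y_1}\wedge\cdots\wedge\partial_{y_{k+2}}$ is the standard Nambu--Poisson tensor of order $k+2\ge 3$ rescaled by the function $g$, hence is itself Nambu--Poisson; and the contraction of a Nambu--Poisson tensor with $k$ exact one-forms is again Nambu--Poisson, here of order $2$, i.e.\ a genuine Poisson bivector (this is the classical Jacobian bracket with Casimirs $f_1,\dots,f_k$). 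Thus $[\pi,\pi]$ vanishes on that open set; since it is a polynomial trivector it vanishes identically. (If $X^1\wedge\cdots\wedge X^{k+2}\equiv 0$ the bracket is zero and there is nothing to prove.)

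An alternative, entirely algebraic route sidesteps Nambu theory and the analytic input: once skew-symmetry and the Leibniz rule are established, $\{\,,\,\}$ is a skew biderivation, so its Jacobiator $(a,b,c)\mapsto\{\{a,b\},c\}+\{\{b,c\},a\}+\{\{c,a\},b\}$ is a skew-symmetric triderivation and is therefore determined by its values on the generators $x_1,\dots,x_n$; expanding $\{\{x_i,x_j\},x_\ell\}$ by the Leibniz rule and multilinearity of the determinant, the terms carrying $X^\nu(g)$ and the second-order terms $X^\nu X^\sigma(f_\mu)$ cancel in the cyclic sum exactly because $X^\nu X^\sigma=X^\sigma X^\nu$, reducing the identity to a finite determinant computation. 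For the degree statement, a homogeneous bracket of internal degree $p$ satisfies $\deg\{a,b\}=\deg a+\deg b+p$; when the $X^\nu$ are homogeneous, $X^\nu(f_\mu)$ has degree $\deg f_\mu+\deg X^\nu$, so each term $\prod_\mu X^{\sigma(\mu)}(f_\mu)$ of $\operatorname{Det}\big((X^\nu(f_\mu))_{\mu,\nu=1}^{k+2}\big)$ is homogeneous of degree $\sum_{\mu=1}^{k+2}\deg f_\mu+\sum_{\nu=1}^{k+2}\deg X^\nu$; multiplying by $g$ (homogeneous, of degree $0$ in the stated graded case) and subtracting $\deg f_{k+1}+\deg f_{k+2}=\deg a+\deg b$ gives $p=\sum_{\mu=1}^{k}\deg f_\mu+\sum_{\nu=1}^{k+2}\deg X^\nu$.

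The main obstacle is the Jacobi identity. In the geometric approach the delicate points are justifying that $\Pi$ really is Nambu--Poisson — for order greater than $2$ the fundamental identity is strictly stronger than $[\Pi,\Pi]=0$ — and applying the descent property of Nambu structures correctly; this is also where the hypothesis $\bs k=\R$ or $\C$ genuinely enters, through analytic straightening together with the density of the open locus in the polynomial (Euclidean/Zariski) sense. In the algebraic approach the obstacle is instead the combinatorial bookkeeping of the determinant expansion that makes the cancellations coming from the commutativity of the $X^\nu$ visible.
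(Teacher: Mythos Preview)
Your proposal is correct, and the geometric route you take is close in spirit to the paper's but packaged differently. Both arguments restrict to the open locus where $X^1,\dots,X^{k+2}$ are independent, invoke simultaneous straightening of the commuting derivations (this is where $\bs k\in\{\R,\C\}$ is used), and then exploit that a $(k{+}3)$-form supported in the rank-$(k{+}2)$ distribution must vanish. The paper, however, does \emph{not} mention Nambu--Poisson structures at all: after straightening it writes the bracket via $G\,\dR F_1\wedge\cdots\wedge\dR F_k\wedge\dR A\wedge\dR B$ modulo the annihilator ideal $\mathcal J$ of the distribution, introduces the Lie derivative $L_\pi=[i_\pi,\dR]$ along the bivector, and shows directly that the Jacobiator equals (up to sign and the factor $G$) $L_\pi$ applied to a closed $(k{+}3)$-form lying in $\mathcal J$, hence lands in $\mathcal J$ and is therefore zero. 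Your version hides exactly this computation inside two black boxes: that $g\,\partial_{y_1}\wedge\cdots\wedge\partial_{y_{k+2}}$ satisfies the fundamental identity, and that fixing arguments of a Nambu bracket preserves the fundamental identity. Both facts are true, and you rightly flag the first as the delicate point; unpacking it leads precisely to the paper's $(k{+}3)$-form argument. So the two proofs share the same kernel, with the paper being self-contained and your write-up more conceptual but reliant on Nambu theory. Your purely algebraic alternative (expand the Jacobiator on generators and cancel via $X^\nu X^\sigma=X^\sigma X^\nu$) is the route the paper attributes to Filippov but does not carry out; that would be a genuinely different, and coordinate-free, proof. Two small remarks: your Nambu argument tacitly assumes $k\ge 1$ (order $\ge 3$); the $k=0$ case is immediate since $[\pi,\pi]$ is then a $3$-vector in a rank-$2$ distribution. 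And your degree count is fine, with the caveat that the stated formula indeed presumes $\deg g=0$.
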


In \cite[Proposition 4.2]{PalamodovInfDef} it is only shown that this  defines a bracket in $S/I$. A more direct proof can be deduced from \cite{Filippov} and a special case is treated in \cite[Section 8.3]{LGPV}. For the convenience of the reader we include another demonstration.

\begin{proof} Let us assume that $\bs k=\R$ (the proof for $\bs k=\C$ is analogous to the real case).
Let $\mathcal U\subset \R ^n$ be the open subset where  $X^1,X^2,\dots,X^{k+2}$ are linearly independent. On $\R^n\backslash \mathcal U$ the bracket is identically zero and there is nothing to show. Let $x\in \mathcal U$
and $\phi$ be a diffeomorphism from a neighborhood $\mathcal V$ of $0\in \{(t_1,\dots t_n)\in \R^n\} $ to a neighborhood $ \mathcal U_x$ of $x$ in $\mathcal U$ such that $T\phi$ sends $\partial/\partial t_\nu$ to $X^\nu$ for $\nu=1,\dots,k+2$.

Let us write $F_\mu:=f_\mu\circ \phi$ for $\mu=1,\dots, k$ and $G:=g\circ \phi$. For $A=:F_{k+1}, B=:F_{k+2}\in\mathcal C^\infty(\mathcal V)$ define a skew-symmetric bracket
\begin{align*}
\{A,B\}:=G\operatorname{Det}\left(\left({\partial F_\mu\over \partial t_\nu}\right)_{\mu,\nu=1,2,\dots,k+2}\right).
\end{align*}
It is enough to show that this is a Poisson bracket since $\{a,b\}\circ \phi=\{a\circ \phi,b\circ \phi\}$. Let
$\mathcal J:=\operatorname{Ann}(\mathcal F)=\oplus_{p\ge 0} \mathcal J^p$,
\begin{align*}
\mathcal J^p:=\{\alpha \in\Omega^p(\mathcal V)\mid \alpha(V_1,\dots,V_p)=0\;\;\forall V_1,\dots,V_p\in \mathcal F\}
\end{align*}
be the annihilator in the algebra of smooth differential forms $\Omega(\mathcal V)$ of the distribution $\mathcal F=\mathcal C^\infty(\mathcal V)\{\partial /\partial t_\nu\mid 1 \le \nu\le k+2\}$. It is the differential ideal generated by $\dR t_\mu$ for $\mu=k+3,\dots,n$. Now $\{A,B\}$ is uniquely determined by the relation
\begin{align*}
G\dR F_1\wedge \dots \wedge \dR F_k\wedge \dR A\wedge \dR B+\mathcal J=\{A,B\}\;\dR t_1\wedge \dots \wedge \dR t_{k+2}+\mathcal J.
\end{align*}
As $\dR F_1\wedge \dots \wedge \dR F_k\wedge \dR (A_1A_2)\wedge \dR B+\mathcal J=A_1\dR F_1\wedge \dots \wedge \dR F_k\wedge \dR A_1\wedge \dR B+A_2\dR F_1\wedge \dots \wedge \dR F_k\wedge \dR A_2\wedge \dR B+\mathcal J$ the bracket satisfies the Leibniz rule in the first argument. Hence there exists a bivector $\pi=\sum_{i,j=1}^{k+2}\pi_{ij}\partial /\partial t_i\wedge\partial /\partial t_j\in \wedge ^2D_{\mathcal C^\infty(\mathcal V)}$ in the second exterior power of the $\mathcal C^\infty(\mathcal V)$-module of derivations $D_{\mathcal C^\infty(\mathcal V)}$ of $\mathcal C^\infty(\mathcal V)$ such that $\{A,B\}=-i_\pi(\dR A\wedge \dR B)$. Here $i_\pi: \Omega^\bullet(\mathcal V)\to  \Omega^{\bullet-2}(\mathcal V)$ is the contraction with the bivector $\pi$. Recall that the Lie derivative $L_\pi=[i_\pi,\dR]=i_\pi\circ \dR-\dR\circ i_\pi: \Omega^\bullet(\mathcal V)\to  \Omega^{\bullet-1}(\mathcal V)$ along
$\pi$ is a second order superdifferential operator annihilating $\Omega^0(\mathcal V)$ and $\Omega^1(\mathcal V)\cap Z(\mathcal V)$, where $Z(\mathcal V)$ is the space of closed forms. It is straightforward to check that $L_\pi(\mathcal J\cap Z(\mathcal V))\subseteq \mathcal J$.

If $\alpha,\beta$, and $\gamma$ are differential forms of degree  $|\alpha|,|\beta|, |\gamma|$, then
\begin{align}\label{eq:2ndOrder}
\nonumber L_\pi(\alpha\wedge\beta\wedge\gamma)&=(-1)^{|\alpha|}\alpha\wedge L_\pi(\beta\wedge\gamma)+(-1)^{|\beta|(|\alpha|-1)}\beta\wedge L_\pi(\alpha\wedge\gamma)+(-1)^{|\gamma|(\beta|+|\alpha|-1)}\gamma\wedge L_\pi(\alpha\wedge\beta)\\
&\qquad\qquad-(-1)^{|\alpha|+|\beta|}\alpha\wedge\beta\wedge L_\pi(\gamma)-(-1)^{|\alpha|+|\gamma|(|\beta|-1)}\alpha\wedge\gamma\wedge L_\pi(\beta).
\end{align}
This can be easily shown from the fact that the supercommutator of the left multiplication $\alpha \wedge $ and $ L_\pi$ is a derivation of $\Omega(\mathcal V)$. In the special case when $\alpha=\dR A,\beta=\dR B$, and $\gamma=\dR C$ this simplifies to
\begin{align*}
L_\pi( \dR A\wedge\dR B\wedge\dR C) &=- \dR A\wedge L_\pi(\dR B\wedge\dR C)+\dR B\wedge L_\pi( \dR A\wedge\dR C)-\dR C \wedge L_\pi( \dR A\wedge\dR B)\\
&=-\dR A\wedge \dR\{B,C\}-\dR B\wedge \dR\{C,A\}-\dR C\wedge \dR\{A,B\}.
\end{align*}
Consequently, with $\operatorname{Jac}(A,B,C):=\{A,\{B,C\}\}+\{B,\{C,A\}\}+\{C,\{A,B\}\}$ we have
\begin{align*}
\operatorname{Jac}(A,B,C)\;\dR t_1\wedge \dots \wedge \dR t_{k+2}+\mathcal J=-G\dR F_1\wedge \dots \wedge \dR F_k\wedge
L_\pi( \dR A\wedge\dR B\wedge\dR C)+ \mathcal J,
\end{align*}
and it remains to show that $\dR F_1\wedge \dots \wedge \dR F_k\wedge
L_\pi( \dR A\wedge\dR B\wedge\dR C)\in \mathcal J$. Let $\alpha\in\Omega(\mathcal V)$ be a closed form. Since $\{F_\mu,\:\}=0$ by construction of $\pi$ it follows that $L_\pi(\dR F_\mu\wedge\alpha)=-\dR F_\mu\wedge L_\pi(\alpha)$. Hence we have
\begin{align*}
\dR F_1\wedge \dots \wedge \dR F_k\wedge
L_\pi( \dR A\wedge\dR B\wedge\dR C)=(-1)^k L_\pi(\dR F_1\wedge \dots \wedge \dR F_k\wedge
 \dR A\wedge\dR B\wedge\dR C).
\end{align*}
But $\dR F_1\wedge \dots \wedge \dR F_k\wedge\dR A\wedge\dR B\wedge\dR C$, being a $(k+3)$-form, is in $\mathcal J\cap Z(\mathcal V)$.
\end{proof}

%*****************complete intersections*************************

\section{Complete intersections and the conormal sequence}\label{sec:ci}

Let $S=\bs k[x_1,x_2,\dots, x_n]$ be our polynomial algebra and consider the ideal $I=(f_1,f_2,\dots,f_k)$ generated by homogeneous polynomials $f_1,f_2,\dots,f_k\in S$.
We write $A=S/I$ for the quotient algebra. We use Latin indices $i,j,\dots$ to index the $x$'s and Greek indices $\mu,\nu,\dots$ to index the $f$'s.

The \emph{Koszul complex} $K_\bullet(S,(f_1,f_2,\dots,f_k))=: K_\bullet(S,\bs f)$ is as an $S$-algebra the graded polynomial algebra $S[y_1,y_2,\dots,y_k]$ in the \emph{odd} variables
$y_\mu$, i.e., we have $y_\mu y_\nu=-y_\nu y_\mu$. The \emph{Koszul differential} $\partial$ is the unique $S$-linear derivation that sends  $y_\mu$ to $f_\mu$. In other words, we can write $\partial$ as a vector field
\begin{align*}
\partial=\sum_{\mu=1}^k f_\mu{\partial\over \partial y_\mu}.
\end{align*}
It is clear that $\partial^2=0$. The homological degree $|y_\mu|=1$. We see that $(K_\bullet(S,\bs f), \partial)$ is a chain complex. In fact,
$(K_\bullet(S,\bs f), \partial)$ is a supercommutative dg algebra with $S$-linear differential $\partial$.
We also declare the internal degree to be $\deg(y_\mu):=\deg(f_\mu)$ so that $\partial$ has internal degree zero. Note that the zeroth homology module $H_0{K}(S,\bs f)$ is isomorphic as an $S$-algebra to $A=S/I$. We say that $\bs f= (f_1,f_2,\dots,f_k)$ is a \emph{complete intersection} if the homologies $H_i{K}(S,\bs f)$ are trivial in homological degree $i\ge 1$.

Based on the coefficients $Z_{i\mu}^\nu$ in the Poisson relation $\{x_i,f_\mu\}=
\sum_\nu Z_{i\mu}^\nu f_\nu$, we defined the tensor $\dP Z-[Z,Z]$ in $\mathrm C^1_{\operatorname{Poiss}}(S)\otimes_S\mathfrak{gl}_k(S)$ (see Equation \eqref{eq:MC}).

\begin{theorem} \label{thm:MC} Suppose $\bs f$ is a complete intersection. Then image of  $\dP Z-[Z,Z]$ in
$\mathrm C^2_{\operatorname{Poiss}}(S)\otimes_S\mathfrak{gl}_k(A)$ vanishes. Moreover, the image of $Z$ in $\mathrm C^1_{\operatorname{Poiss}}(S)\otimes_S\mathfrak{gl}_k(A)$ is uniquely determined by $\bs f$.
\end{theorem}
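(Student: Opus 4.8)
The plan is to exploit the uniqueness of the $Z_{i\mu}^\nu$ modulo $I$ that comes from the complete intersection hypothesis (already recorded in the introduction, Section \ref{sec:ci}), together with a Maurer--Cartan-type computation in the dg Lie algebra $\mathrm C^\bullet_{\operatorname{Poiss}}(S)\otimes_S\mathfrak{gl}_k(A)$. First I would settle the last sentence, since it is the easier half: if $(\widetilde Z_{i\mu}^\nu)$ is another choice of tensor satisfying $\{x_i,f_\mu\}=\sum_\nu \widetilde Z_{i\mu}^\nu f_\nu$, then $\sum_\nu (Z_{i\mu}^\nu-\widetilde Z_{i\mu}^\nu)f_\nu=0$ for every $i,\mu$, i.e.\ each row of the matrix $Z_i-\widetilde Z_i$ is a syzygy of $\bs f$. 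Because $\bs f$ is a complete intersection, the first syzygy module is generated by the Koszul syzygies $f_\nu e_\lambda - f_\lambda e_\nu$; hence $Z_{i\mu}^\nu - \widetilde Z_{i\mu}^\nu \in I$ for all $i,\mu,\nu$, so the image of $Z$ in $\mathrm C^1_{\operatorname{Poiss}}(S)\otimes_S\mathfrak{gl}_k(A)$ does not depend on the choice. (This also re-proves the remark in the introduction that the image of $\dP Z-[Z,Z]$ in $\mathrm C^\bullet_{\operatorname{Poiss}}(S)\otimes_S\mathfrak{gl}_k(A)$ depends only on the classes $Z_{i\mu}^\nu+I$, which we may therefore use freely.)

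For the vanishing statement, I would work directly with the defining relation. Apply $\{x_i,-\}$ to $\{x_j,f_\mu\}=\sum_\nu Z_{j\mu}^\nu f_\nu$ and antisymmetrize in $i,j$, using the Jacobi identity $\{x_i,\{x_j,f_\mu\}\}-\{x_j,\{x_i,f_\mu\}\}=\{\{x_i,x_j\},f_\mu\}=\{\Lambda_{ij},f_\mu\}$. On the left this produces, via the Leibniz rule, terms $\{x_i,Z_{j\mu}^\nu\}f_\nu - \{x_j,Z_{i\mu}^\nu\}f_\nu$ together with $Z_{j\mu}^\nu\{x_i,f_\nu\} - Z_{i\mu}^\nu\{x_j,f_\nu\} = (Z_{j\mu}^\nu Z_{i\nu}^\lambda - Z_{i\mu}^\nu Z_{j\nu}^\lambda)f_\lambda$, i.e.\ a commutator $[Z_i,Z_j]$ contracted against $\bs f$. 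On the right, $\{\Lambda_{ij},f_\mu\} = -\{f_\mu,\Lambda_{ij}\}$ and, since $\Lambda_{ij}=\{x_i,x_j\}\in S$ with $\partial\Lambda_{ij}/\partial x_\ell$ the relevant coefficients, one gets $\sum_\ell (\partial\Lambda_{ij}/\partial x_\ell)\{x_\ell,f_\mu\} = \sum_{\ell,\nu}(\partial\Lambda_{ij}/\partial x_\ell)Z_{\ell\mu}^\nu f_\nu$. Matching this against the explicit formula for the Poisson differential $\dP$ on $\mathrm C^1_{\operatorname{Poiss}}(S)$ (Appendix \ref{ap:Poissoncohomology}), the whole identity reads: the matrix $\big(\dP Z - [Z,Z]\big)(\dR x_i,\dR x_j)$, contracted against the column vector $(f_\nu)_\nu$, equals zero in $S^k$, for all $i,j$. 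In other words, every row of each matrix entry of $\dP Z-[Z,Z]$ is a syzygy of $\bs f$.

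Now the complete intersection hypothesis finishes it exactly as before: the syzygy module of $\bs f$ is generated by Koszul syzygies, so each such row lies in the $S$-span of $\{f_\nu e_\lambda - f_\lambda e_\nu\}$, and in particular each entry of $\dP Z - [Z,Z]$ lies in $I$. Passing to $\mathrm C^\bullet_{\operatorname{Poiss}}(S)\otimes_S\mathfrak{gl}_k(A)$ kills it. The main obstacle I anticipate is purely bookkeeping: getting the signs and the Koszul-sign conventions in the bracket $[\,,\,]$ on $\mathrm C^\bullet_{\operatorname{Poiss}}(S)\otimes_S\mathfrak{gl}_k(S)$ to agree with the normalization $\dP Z - [Z,Z]$ in \eqref{eq:MC} (as opposed to $\dP Z - \tfrac12[Z,Z]$ or $\dP Z + [Z,Z]$), and likewise pinning down the precise formula for $\dP$ on $1$-cochains with values in $\mathfrak{gl}_k$ so that the Jacobi-identity computation lands on the nose; the algebraic content — "contraction against $\bs f$ vanishes, hence the cochain lies in $I$ by the complete intersection property" — is straightforward once the conventions are fixed. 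It may be cleanest to phrase the Jacobi computation as the statement that $Z$, viewed in $\mathrm C^1_{\operatorname{Poiss}}(S)\otimes_S\mathfrak{gl}_k(A)$, is a Maurer--Cartan element of that dg Lie algebra, which is conceptually what the theorem asserts.
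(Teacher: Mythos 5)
Your proposal is correct and takes essentially the same route as the paper: the Jacobi identity combined with the Leibniz rule shows that $\dP Z-[Z,Z]$ contracted against $\bs f$ vanishes, i.e.\ gives a Koszul $1$-cycle, and the complete intersection hypothesis (vanishing of $H_1$ of the Koszul complex) forces all of its coefficients, and likewise the difference of any two choices of $Z$, into $I$. The only cosmetic difference is that for the uniqueness statement the paper cites Vasconcelos's theorem (Matsumura, Theorem 19.9), whereas you argue directly that syzygies of a complete intersection are spanned by Koszul syzygies --- the same underlying fact.
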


\begin{proof}
We use Einstein summation convention, i.e., summation over repeated indices is understood. If two terms $X$ and $Y$ are equal modulo $I$ we will write $X\sim Y$.

From Appendix \ref{ap:Poissoncohomology} we deduce
\begin{align}\label{eq:Poiss2coboundary}
\dP\left(Z_{i\mu}^\nu {\partial
\over\partial x_i}\right)(\dR x_i,\dR x_j)=\{x_i,Z_{j\mu}^\nu\}-\{x_j,Z_{i\mu}^\nu\}- Z_{k\mu}^\nu{\partial\Lambda_{ij}\over\partial x_k}.
\end{align}
Note furthermore that
\begin{align}\label{eq:ZLambdaidentity}
Z_{i\mu}^\nu f_\nu=\{x_i,f_\mu\}=\Lambda_{ij}{\partial f_\mu\over \partial x_j}.
\end{align}
We proceed by analyzing the Jacobi identity
\begin{align*}
&0=\{x_i,\{x_j,f_\mu\}\}-\{x_j,\{x_i,f_\mu\}\}+\{f_\mu,\{x_i,x_j\}\}\\
&=\{x_i,Z_{j\mu}^\nu f_\nu\}- \{x_j,Z_{i\mu}^\nu f_\nu\} +\{f_\mu,\Lambda_{ij}\}\\
&=\{x_i,Z_{j\mu}^\nu\} f_\nu+Z_{j\mu}^\nu Z_{i\nu}^\lambda f_\lambda -
\{x_j,Z_{i\mu}^\nu\} f_\nu-Z_{i\mu}^\nu Z_{j\nu}^\lambda f_\lambda
+\Lambda_{k\ell}{\partial f_\mu\over \partial x_k}{\partial \Lambda_{ij}\over \partial x_\ell}\\
&=\left(\{x_i,Z_{j\mu}^\nu\}-
\{x_j,Z_{i\mu}^\nu\}\right)f_\nu+([Z_j,Z_i])_\mu^\nu f_\nu
-Z_{\ell\mu}^\nu f_\nu{\partial \Lambda_{ij}\over \partial x_\ell}\\
&=\partial\left(\left(\{x_i,Z_{j\mu}^\nu\}-
\{x_j,Z_{i\mu}^\nu \}-Z_{\ell\mu}^\nu {\partial \Lambda_{ij}\over \partial x_\ell} -([Z_i,Z_j])_\mu^\nu\right)y_\nu\right).
\end{align*}
This means that the expression in the last equation is a Koszul $1$-cycle, hence a Koszul $1$-boundary and as such $\sim 0$.
We conclude that
\begin{align*}
\dP\left(Z_{i\mu}^\nu {\partial
\over\partial x_i}\right)\sim {1\over 2}([Z_i,Z_j])_\mu^\nu {\partial
\over\partial x_i}\wedge{\partial
\over\partial x_j}.
\end{align*}
The uniticity statement is a rephrasing of a theorem of Vasconcelos (see \cite[Theorem 19.9]{Matsumura}).
\end{proof}

\begin{corollary} \label{cor:principalideal} If $I=(f_1)$ is a principal ideal with $f_1\ne 0\in S$, then  $Z=\sum_i Z_{i1}^1\partial /\partial x_i\in \mathrm C^1_{\operatorname{Poiss}}(S)\cong\mathrm C^1_{\operatorname{Poiss}}(S)\otimes \mathfrak {gl}_1(S) $ is a cocycle: $\dP Z=0$.
If in addition $Z_{i1}^1$ and $Z_{j1}^1$ are Casimirs in $S$, then $Z(\{x_i,x_j\})=0\in S$. If $Z_{i1}^1$ is a Casimir for all $i$, then $Z$ commutes with all Hamiltonian vector fields: $[Z,\dP S]=0$.
\end{corollary}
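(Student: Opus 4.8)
The plan is to derive all three assertions from the coboundary formula \eqref{eq:Poiss2coboundary} together with the fact that $S=\bs k[x_1,\dots,x_n]$ is an integral domain. Throughout I abbreviate $Z_i:=Z_{i1}^1$, so that the Poisson relation reads $\{x_i,f_1\}=Z_if_1$ and $Z=\sum_iZ_i\,\partial/\partial x_i$; in the domain $S$ these coefficients are moreover uniquely determined by $f_1$.

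\emph{Step 1: the cocycle property $\dP Z=0$.} I would expand the Jacobi identity $\{x_i,\{x_j,f_1\}\}-\{x_j,\{x_i,f_1\}\}+\{f_1,\{x_i,x_j\}\}=0$ using $\{x_j,f_1\}=Z_jf_1$ and the Leibniz rule, exactly as in the proof of Theorem \ref{thm:MC} but with only one generator. The two terms quadratic in the $Z$'s cancel because $S$ is commutative, and one is left with $\bigl(\{x_i,Z_j\}-\{x_j,Z_i\}-\sum_kZ_k\,\partial\Lambda_{ij}/\partial x_k\bigr)\,f_1=0$. Since $S$ is a domain and $f_1\neq0$, the bracketed factor vanishes; by \eqref{eq:Poiss2coboundary} that factor is precisely $(\dP Z)(\dR x_i,\dR x_j)$, so $\dP Z=0$. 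Alternatively, a principal ideal generated by a nonzero element of a domain is a complete intersection, so Theorem \ref{thm:MC} already gives $\dP Z-[Z,Z]\equiv 0$ modulo $I$; here $[Z,Z]=0$ since $\mathfrak{gl}_1$ is abelian, and the same domain argument—the group of Koszul $1$-cycles of $K_\bullet(S,f_1)$ is trivial—upgrades the congruence to an equality.

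\emph{Step 2: $Z(\{x_i,x_j\})=0$.} Evaluate $\dP Z=0$ on $\dR x_i\wedge\dR x_j$. By \eqref{eq:Poiss2coboundary} this reads $0=\{x_i,Z_j\}-\{x_j,Z_i\}-\sum_kZ_k\,\partial\Lambda_{ij}/\partial x_k=\{x_i,Z_j\}-\{x_j,Z_i\}-Z(\{x_i,x_j\})$. If $Z_i$ and $Z_j$ are Casimirs, then $\{x_i,Z_j\}=-\{Z_j,x_i\}=0$ and $\{x_j,Z_i\}=0$, so $Z(\{x_i,x_j\})=0$; note that only the two coefficients named in the hypothesis enter.

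\emph{Step 3: commuting with all Hamiltonian vector fields.} The identity $\dP Z=0$ says exactly that $Z$ is a Poisson vector field, $Z\{a,b\}=\{Za,b\}+\{a,Zb\}$. Expanding $[Z,X_g](h)=Z\{g,h\}-\{g,Zh\}$ and using this gives the standard relation $[Z,X_g]=X_{Zg}$ for each Hamiltonian vector field $X_g=\{g,-\}$. Taking $g=x_i$ yields $[Z,X_{x_i}]=X_{Z_i}$, which vanishes since $Z_i$ is a Casimir, so $Z$ commutes with the Hamiltonian vector fields $X_{x_1},\dots,X_{x_n}$ of the coordinates, and these span $\dP S$ as an $S$-module via $X_g=\sum_i(\partial_ig)X_{x_i}$. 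I expect Step 3—the passage from these generators to an arbitrary $X_g$—to be the main obstacle: because $[Z,X_g]=X_{Zg}$, proving $[Z,\dP S]=0$ amounts to showing that $Zg$ lies in the Poisson center of $S$ for every $g\in S$, which is where the Casimir condition must be invoked for all indices simultaneously rather than for a single pair as in Step 2. A natural device to bring in here is the auxiliary identity $X_{f_1}=-f_1Z$ (immediate from $\{x_i,f_1\}=Z_if_1$), which realizes $Z$ as $-f_1^{-1}X_{f_1}$, forces $Zf_1=0$, and is the starting point for controlling $Zg$ by combining the Casimir property of the $Z_i$ with the Leibniz rule; carrying this out carefully is the crux of the argument.
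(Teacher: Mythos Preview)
Your Steps 1 and 2 are correct and coincide with the paper's argument: the paper also extracts the identity $\bigl(\{x_i,Z_j\}-\{x_j,Z_i\}-\sum_\ell Z_\ell\,\partial\Lambda_{ij}/\partial x_\ell\bigr)f_1=0$ from the proof of Theorem~\ref{thm:MC}, cancels $f_1$ using that $S$ is a domain, and reads off both $\dP Z=0$ and (under the Casimir hypothesis on $Z_i,Z_j$) $Z(\Lambda_{ij})=0$. Your write-up is simply more explicit.

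Your caution in Step 3 is not only appropriate but essential: the third assertion, read literally as $[Z,X_g]=0$ for \emph{all} $g\in S$, is false. Your formula $[Z,X_g]=X_{Zg}$ is correct, and it reduces the question to whether $Zg$ lies in the Poisson center for every $g$; but knowing that each $Z_i=Z(x_i)$ is Casimir does \emph{not} propagate under the Leibniz rule, since $Z(ab)=(Za)b+a(Zb)$ need not be central even when $Za$ and $Zb$ are. A concrete counterexample: take $S=\bs k[x_1,x_2]$ with $\{x_1,x_2\}=x_1$ and $f_1=x_1$. Then $Z_1=0$, $Z_2=-1$ are both constants (hence Casimir), $Z=-\partial/\partial x_2$, and one checks directly that
\[
[Z,X_{x_2^2}](x_1)=Z\{x_2^2,x_1\}-\{x_2^2,Z(x_1)\}=-\partial_2(-2x_1x_2)-0=2x_1\neq 0.
\]
What your argument (and the paper's one-line justification) \emph{does} establish is $[Z,X_{x_j}]=0$ for each coordinate Hamiltonian vector field, via $[Z,X_{x_j}](x_k)=Z(\Lambda_{jk})-\{x_j,Z_k\}=0$. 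The auxiliary identity $X_{f_1}=-f_1Z$ you propose is correct and yields $Zf_1=0$, but it does not close the gap for general $g$, and in fact no argument can, as the example shows. So the honest conclusion is that the paper's phrase ``all Hamiltonian vector fields'' should be read as the coordinate ones; your derivation already proves that much.
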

\begin{proof}
Observing that the commutator in $\mathfrak{gl}_1$ vanishes, we analyze the relation
\begin{align*}
0=\left(\{x_i,Z_{j1}^1\}-
\{x_j,Z_{i1}^1\}
-Z_{\ell1}^1 {\partial \Lambda_{ij}\over \partial x_\ell}\right)f_1
\end{align*}
from the proof of the previous theorem. As the complement of the locus of $f_1$ is dense, we can discard the factor of  $f_1$. The claims follow from the relation $Z(\{x_i,x_j\})=Z_{\ell1}^1 {\partial \Lambda_{ij}/ \partial x_\ell}$.
\end{proof}

In the following proposition we use our standing assumption that $S$ has a $\Z_{\ge 0}$-grading compatible with the Poisson bracket such that $S_0=\bs k$.

\begin{proposition} Let $I$ be a graded principal Poisson ideal in $S$ and suppose that the first Poisson cohomology $\mathrm H^1_{\operatorname{Poiss}}(S)$ is trivial. Then any generator of the principal Poisson ideal $I$ is a Casimir.
\end{proposition}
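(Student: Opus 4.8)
The plan is to reduce to the case of a homogeneous generator and then to combine Corollary~\ref{cor:principalideal} with the vanishing of $\mathrm H^1_{\operatorname{Poiss}}(S)$ and a count of internal degrees.

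First I would note that the statement need only be checked for one generator: if $f$ and $f'$ both generate $I$, then $f'=uf$ with $u$ a unit of $S$, and the units of the polynomial algebra are the nonzero constants, so $f'$ is a Casimir if and only if $f$ is. Next I would show that $I$ admits a homogeneous generator, so that after the previous remark we may take $f$ homogeneous. For this, pick a nonzero homogeneous $g$ of minimal degree in $I$ and write $g=hf$; since $S$ is a domain, the top- and bottom-degree homogeneous components of $h$ and $f$ multiply to give the top and bottom components of $g$, and as $g$ is homogeneous this forces both $h$ and $f$ to be homogeneous. (If $I=0$ there is nothing to prove.)

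Now assume $f$ homogeneous and nonzero. As $I=(f)$ is Poisson we may write $\{x_i,f\}=Z_i f$ with $Z_i:=Z_{i1}^1\in S$, and Corollary~\ref{cor:principalideal} says that $Z=\sum_i Z_i\,\partial/\partial x_i\in\mathrm C^1_{\operatorname{Poiss}}(S)$ is a cocycle, $\dP Z=0$. Since $f$ is homogeneous and the bracket respects the internal degree, each $Z_i$ is homogeneous of internal degree $\deg(x_i)+\deg(\{\:,\:\})$, so $Z$ is homogeneous of internal degree $\deg(\{\:,\:\})$ --- exactly the internal degree by which $\dP$ shifts, since the Poisson bivector has that internal degree. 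Invoking $\mathrm H^1_{\operatorname{Poiss}}(S)=0$, write $Z=\dP h$ with $h\in\mathrm C^0_{\operatorname{Poiss}}(S)=S$; comparing internal degrees and discarding the homogeneous components of $h$ annihilated by $\dP$, we may take $h$ homogeneous of internal degree $0$, hence $h\in S_0=\bs k$. A constant is a Casimir, so $\dP h=0$, whence $Z=0$. Since $S$ is a domain and $f\neq 0$ we conclude $\{x_i,f\}=Z_i f=0$ for all $i$, i.e., $f$ is a Casimir.

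The only place I expect to need care is the degree bookkeeping: one must verify that the internal degree of $Z$ coincides with the degree shift of $\dP$, so that any primitive $h$ is forced into internal degree $0$; this is what makes the hypothesis $\mathrm H^1_{\operatorname{Poiss}}(S)=0$ bite, and it is insensitive to the sign of $\deg(\{\:,\:\})$. The reduction to a homogeneous generator via the UFD property of $S$ is a routine preliminary, and everything else is immediate once $Z$ is recognized as an exact Poisson $1$-cocycle of internal degree matching $\dP$.
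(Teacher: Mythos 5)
Your proof is correct and follows essentially the same route as the paper's: Corollary \ref{cor:principalideal} makes $Z$ a cocycle, triviality of $\mathrm H^1_{\operatorname{Poiss}}(S)$ makes it a coboundary $\dP a$, and the internal-degree count together with $S_0=\bs k$ forces the primitive $a$ to be a constant, hence $Z=0$ and $f$ is Casimir. Your explicit preliminary reduction to a homogeneous generator and the phrasing of the degree count via $\deg(\dP)=\deg(\{\:,\:\})$ (rather than via the equation $\{x_i,f\}=\{x_i,a\}f$ as in the paper) are only minor packaging differences, not a different argument.
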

\begin{proof}
Let us simplify the notation by writing $f=f_1$ and $Z_i=Z_{i1}^1$. As $Z$ is a Poisson coboundary there is a function $a\in S$ such that $Z_i=\{x_i,a\}$ for all $i=1,\dots,n$. We can assume $a\in S$ is homogeneous.
By the definition of $Z_i$ we have  $\{x_i,f\}=\{x_i,a\}f$.
This implies $\deg(a)=0$, hence $a$ is a scalar, which means that $Z_i=0$.
\end{proof}

\begin{corollary} For any of the Poisson brackets of Subsection \ref{subsec:brackets of degree >=0}  $\mathrm H^1_{\operatorname{Poiss}}(S)$  is nontrivial.
\end{corollary}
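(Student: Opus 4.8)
The plan is to combine the previous proposition with the explicit examples from Subsection~\ref{subsec:brackets of degree >=0}, where the Poisson brackets (diagonal brackets and the Volterra brackets) all admit nonzero monomial Casimir-generated Poisson ideals that fail to be Casimir-generated in the sense required. More precisely, I would argue by contradiction: suppose $\mathrm H^1_{\operatorname{Poiss}}(S)$ were trivial for one of these brackets. Then the preceding proposition applies to \emph{every} graded principal Poisson ideal, forcing every such ideal to be generated by a Casimir.

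The contradiction then comes from exhibiting a single explicit graded principal Poisson ideal whose generator is demonstrably not a Casimir. For the diagonal bracket $\{x_i,x_j\}=c_{ij}x_ix_j$ with a nonzero $c_{ij}$, the ideal $I=(x_i)$ is a monomial ideal, hence Poisson by Proposition~\ref{prop:diag}, and $\{x_j,x_i\}=c_{ij}x_ix_j \ne 0$, so $x_i$ is not a Casimir; since $(x_i)$ is prime, the generator is unique up to a scalar, so no Casimir generator exists. (One must note that the matrix $(c_{ij})$ is assumed not identically zero, which is the only nontrivial case; if it is zero the bracket is trivial and there is nothing of interest, but then one instead invokes the Volterra bracket $\{\:,\:\}_1$, which has no nonconstant polynomial Casimirs at all, so \emph{any} graded principal ideal generated by a homogeneous polynomial of positive degree that is Poisson — e.g.\ $(x_ix_{i+1})$ — provides the contradiction.) For the bracket $\{\:,\:\}_1$ from the Volterra system, which by the cited discussion has \emph{no} nonconstant polynomial Casimirs, the ideal $(x_1x_2)$ is monomial hence Poisson, and $x_1x_2$ is not Casimir, again contradicting the proposition. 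Thus $\mathrm H^1_{\operatorname{Poiss}}(S)$ must be nontrivial in each case.

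The only genuine subtlety — and thus the main point to be careful about — is verifying that the relevant ideals really are Poisson ideals (handled by Proposition~\ref{prop:diag} and the explicit $Z_{i\mu}^\nu$ computed for $\{\:,\:\}_1$ in Subsection~\ref{subsec:brackets of degree >=0}) and that they genuinely admit no Casimir generator, which for principal prime ideals reduces to checking the specific generator is not Casimir. Everything else is a direct application of the preceding proposition, so I expect the proof to be short: one sentence identifying the contradicting ideal for each family of brackets in Subsection~\ref{subsec:brackets of degree >=0}, then invoke the proposition.
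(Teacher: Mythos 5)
Your argument is correct and is essentially the paper's own (implicit) one: the corollary follows from the preceding proposition by exhibiting, for each bracket of Subsection~\ref{subsec:brackets of degree >=0}, a graded principal monomial Poisson ideal whose generator is not a Casimir. Two small simplifications: since the proposition already asserts that \emph{any} generator is a Casimir (and generators of a principal ideal in the domain $S$ differ only by nonzero scalars), the primality/uniqueness discussion is unnecessary, and in the degenerate case of an identically zero coefficient matrix one has $\dP=0$, so $\mathrm H^1_{\operatorname{Poiss}}(S)=D_S\neq 0$ directly rather than by appeal to a different bracket.
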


\begin{lemma} Assuming $\bs f$ is a complete intersection, let $g_1,g_2,\dots,g_\ell\in S$ be another set of generators for the ideal $I$
such that each $g_\alpha$ is a Casimir modulo I, i.e., $\{x_i,g_\alpha\}\in I$ for all $i$ and $\alpha$. For the transition matrices $N\in S^{\ell\times k}$ and $M\in S^{k\times \ell}$ with
\[g_\alpha=\sum_{\mu=1}^k N_\alpha^\mu f_\mu,\qquad  f_\mu=\sum_{\alpha=1}^\ell M_\mu^\alpha g_\alpha\]
we have
\begin{align*}
Z_{i\mu}^\nu+I=-\sum_{\alpha=1}^\ell M_\mu^\alpha \{x_i,N_\alpha^\nu\}+I=\sum_{\alpha=1}^\ell N_\alpha^\nu\{x_i,M^\alpha_\mu\}+I.
\end{align*}
This can be also written in the form $Z=-M\dP N=(\dP M)N \in\mathrm C^1_{\operatorname{Poiss}}(S)\otimes_S\mathfrak{gl}_k(A)$.
\end{lemma}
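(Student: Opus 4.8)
The plan is to prove the two displayed congruences entrywise, for fixed $i,\mu,\nu$, and then repackage them as the matrix identity. Everything rests on two consequences of $\bs f=(f_1,\dots,f_k)$ being a complete intersection. (a) Since the Koszul complex $K_\bullet(S,\bs f)$ is acyclic in positive homological degree, every syzygy $\sum_\nu c_\nu f_\nu=0$ is a combination of the tautological Koszul relations $f_\mu y_\nu-f_\nu y_\mu$, so each $c_\nu\in I$; more generally $\sum_\nu c_\nu f_\nu\in I^2$ forces $c_\nu\in I$ (write $\sum_\nu c_\nu f_\nu=\sum_{\mu,\nu}d_{\mu\nu}f_\mu f_\nu$ and apply the previous remark to $\sum_\nu(c_\nu-\sum_\mu d_{\mu\nu}f_\mu)f_\nu=0$; equivalently, $I/I^2$ is $A$-free on $\bar f_1,\dots,\bar f_k$). (b) Expanding $f_\mu=\sum_\alpha M_\mu^\alpha g_\alpha=\sum_\alpha M_\mu^\alpha\sum_\nu N_\alpha^\nu f_\nu=\sum_\nu(MN)_\mu^\nu f_\nu$ and comparing with $f_\mu=\sum_\nu\delta_\mu^\nu f_\nu$, fact (a) yields $MN\equiv\mathrm{Id}_k\pmod I$.

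Next I would run two Leibniz computations. For $Z\equiv-M\,\dP N$: starting from the Casimir relation $\{x_i,g_\alpha\}=0$ (see the caveat below) and using $g_\alpha=\sum_\mu N_\alpha^\mu f_\mu$ together with $\{x_i,f_\mu\}=\sum_\nu Z_{i\mu}^\nu f_\nu$,
\[
0=\{x_i,g_\alpha\}=\sum_\mu\{x_i,N_\alpha^\mu\}f_\mu+\sum_\mu N_\alpha^\mu\{x_i,f_\mu\}=\sum_\nu\Bigl(\{x_i,N_\alpha^\nu\}+\sum_\mu N_\alpha^\mu Z_{i\mu}^\nu\Bigr)f_\nu,
\]
so by (a) the bracketed coefficient lies in $I$, i.e.\ $\sum_\mu N_\alpha^\mu Z_{i\mu}^\nu\equiv-\{x_i,N_\alpha^\nu\}\pmod I$; contracting with $M_\beta^\alpha$, summing over $\alpha$, and using (b) to replace $\sum_\mu(MN)_\beta^\mu Z_{i\mu}^\nu$ by $Z_{i\beta}^\nu$ modulo $I$ gives the first asserted formula. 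For $Z\equiv(\dP M)N$ one instead expands $\{x_i,f_\mu\}$ via $f_\mu=\sum_\alpha M_\mu^\alpha g_\alpha$, the Casimir relation killing the term $\sum_\alpha M_\mu^\alpha\{x_i,g_\alpha\}$:
\[
\sum_\nu Z_{i\mu}^\nu f_\nu=\{x_i,f_\mu\}=\sum_\alpha\{x_i,M_\mu^\alpha\}g_\alpha=\sum_\nu\Bigl(\sum_\alpha N_\alpha^\nu\{x_i,M_\mu^\alpha\}\Bigr)f_\nu,
\]
and (a) gives $Z_{i\mu}^\nu\equiv\sum_\alpha N_\alpha^\nu\{x_i,M_\mu^\alpha\}\pmod I$. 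Reading $\dP$ entrywise on $\mathfrak{gl}_k(S)$, these families of congruences are exactly $Z=-M\,\dP N=(\dP M)N$ in $\mathrm C^1_{\operatorname{Poiss}}(S)\otimes_S\mathfrak{gl}_k(A)$; the equality of the two right-hand sides also follows abstractly from the Leibniz rule $\dP(MN)=(\dP M)N+M\,\dP N$ in the dg Lie algebra $\mathrm C^\bullet_{\operatorname{Poiss}}(S)\otimes_S\mathfrak{gl}_k(S)$, since $MN\equiv\mathrm{Id}_k$ and $\dP$ carries $I$-valued matrices to $I$-valued ones.

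The computations themselves are pure bookkeeping, so the real work is making the hypotheses bite in the right places. Fact (a) is used three times, always to strip off a factor of $\bs f$, and is indispensable: without regularity of the sequence the coefficients of a vanishing (or $I^2$-valued) combination of the $f_\nu$ need not lie in $I$. The subtler point is the Casimir hypothesis, which I would use in the effective form $\{x_i,g_\alpha\}=0$ (the weaker $\{x_i,g_\alpha\}\in I^2$ suffices just as well), whereas the literally stated $\{x_i,g_\alpha\}\in I$ holds automatically because $I$ is a Poisson ideal and $g_\alpha\in I$, and by itself is too weak: for $S=\bs k[x_1,x_2]$ with $\{x_1,x_2\}=x_1x_2$, $I=(x_1)=(f_1)$ with $f_1=x_1$, and the generating set $g_1=x_1$, $g_2=x_1x_2$, one has $N_1^1=1$, $N_2^1=x_2$, $M_1^1=1$, $M_1^2=0$ and $Z_{21}^{1}=-x_2\not\equiv0\pmod I$, yet $-\sum_\alpha M_1^\alpha\{x_2,N_\alpha^1\}=0$. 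So I would want the Casimir condition understood in the stronger sense throughout.
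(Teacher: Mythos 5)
Your proof is correct and follows essentially the same route as the paper's: both establish $M_\mu^\alpha N_\alpha^\nu\equiv\delta_\mu^\nu$ modulo $I$ and then run the two Leibniz expansions, using acyclicity of the Koszul complex of the complete intersection to conclude that the coefficients of a relation among the $f_\nu$ lie in $I$. The only cosmetic difference is in the second displayed congruence: the paper deduces it from the first by applying $\{x_i,\cdot\}$ to $M_\mu^\alpha N_\alpha^\nu\equiv\delta_\mu^\nu$ (using that $\delta_\mu^\nu$ is Casimir and that $I$ is a Poisson ideal), which is exactly your closing remark via the Leibniz rule $\dP(MN)=(\dP M)N+M\,\dP N$; your direct re-expansion of $\{x_i,f_\mu\}$ through the $g_\alpha$ is an equivalent alternative.

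Your caveat about the hypothesis is well taken, and it in fact pinpoints the weak step in the paper's own argument. As you observe, $\{x_i,g_\alpha\}\in I$ holds automatically for any $g_\alpha\in I$ since $I$ is a Poisson ideal, and your example ($S=\bs k[x_1,x_2]$, $\{x_1,x_2\}=x_1x_2$, $I=(x_1)$, $g_1=x_1$, $g_2=x_1x_2$) is a genuine counterexample to the conclusion under that literal reading. The paper's proof passes over this by asserting that $\bigl(\{x_i,N_\alpha^\nu\}+N_\alpha^\mu Z_{i\mu}^\nu\bigr)y_\nu$ is a ``Koszul $1$-cycle modulo $I$, hence a Koszul $1$-boundary modulo $I$''; that implication is not valid in general, since $H_1\bigl(K_\bullet(S,\bs f)\otimes_S A\bigr)\cong I/I^2\neq 0$. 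It becomes valid precisely when the combination $\bigl(\{x_i,N_\alpha^\nu\}+N_\alpha^\mu Z_{i\mu}^\nu\bigr)f_\nu$ vanishes on the nose or lies in $I^2$, i.e., under your strengthened hypothesis that the $g_\alpha$ are honest Casimirs (or at least $\{x_i,g_\alpha\}\in I^2$), in which case your fact (a) — freeness of $I/I^2$ on the classes of the $f_\nu$ — does exactly the work needed. This strengthened reading is also the one consistent with how the lemma is meant to be used in the paper, where the alternative generating sets (e.g.\ for the Kleinian hypersurfaces) consist of genuine Casimirs. So: same method as the paper, but your formulation of the hypothesis is the correct one, and your counterexample shows the statement as literally printed needs that correction.
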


\begin{proof} We use the same shorthand notations as in the  proof of Theorem \ref{thm:MC}. We have
$f_\nu=M_\nu^\alpha N_\alpha^\mu f_\mu$. We write this as $(\delta_\nu^\mu-M_\nu^\alpha N_\alpha^\mu)f_\mu=0$ using the Kronecker $\delta$-symbol. Therefore
\begin{align}\label{eq:rightinverse}
(\delta_\nu^\mu-M_\nu^\alpha N_\alpha^\mu)y_\mu
\end{align}
is a Koszul $1$-cycle, hence a Koszul $1$-boundary and as such $\sim 0$. We conclude that $M_\nu^\alpha N_\alpha^\mu\sim \delta_\nu^\mu$.

Next we are analyzing
\begin{align*}
0\sim\{x_i,g_\alpha\}=\{x_i,N_\alpha^\nu f_\nu\}=\{x_i,N_\alpha^\nu\} f_\nu+N_\alpha^\nu Z_{i\nu}^\mu f_\mu=\left(\{x_i,N_\alpha^\nu\} +N_\alpha^\mu Z_{i\mu}^\nu \right)f_\nu.
\end{align*}
We conclude that $\left(\{x_i,N_\alpha^\nu\} +N_\alpha^\mu Z_{i\mu}^\nu \right)y_\nu$ is a Koszul $1$-cycle modulo $I$, hence a  Koszul $1$-boundary modulo $I$ and as such $\sim 0$. We write this as $N_\alpha^\mu Z_{i\mu}^\nu\sim \{x_i,N_\alpha^\nu\}$. Multiplying this with matrix $M$ we arrive at $Z_{i\mu}^\nu\sim-M_\mu^\alpha \{x_i,N_\alpha^\nu\}$. Applying \eqref{eq:rightinverse} and the fact that $\delta_\mu^\nu$ is a Casimir we can rewrite this as $N_\alpha^\nu\{x_i,M^\alpha_\mu\}$.
\end{proof}

Using $\bs f=(f_1,f_2,\dots,f_k)$, we form a two-term chain complex $\mathbb L=\mathbb L_0\oplus\mathbb L_1$ of free $A$-modules $\mathbb L_0:=A^n$ and $\mathbb L_1:=A^k$:
\begin{align}\label{eq:cicotangent}
0\leftarrow\mathbb L_0\overset{\partial}{\longleftarrow} \mathbb L_1 \leftarrow 0
\end{align}
The differential is given as follows. Let $e_i$ be the canonical basis for $\mathbb L_0=A^n$ and $\varepsilon_\mu$ be the canonical basis for $\mathbb L_1=A^k$. Then $\partial$ is the unique $A$-linear map such that  $\partial \varepsilon_\mu=\sum_{i=1}^n\partial f_\mu/\partial x_i\:e_i$. By sending  $e_i \mapsto \dR x_i$ and $\varepsilon_\mu\mapsto 0$ we obtain an $A$-linear map $\mathbb L\to\Omega_{A|\bs k}$.
The complex $\mathbb L$ is sometimes referred to as the \emph{conormal sequence} and is a special case of the cotangent complex (see Section \ref{sec:cotangent}).
Recall the following well-known fact.

\begin{lemma} If $\bs f$ is a reduced complete intersection it follows that $\mathbb L$ is exact in homological degree $1$, and hence the map $\mathbb L\to\Omega_{A|\bs k}$ is a quasiisomorphism.
\end{lemma}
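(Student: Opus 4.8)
The plan is to separate the claim into two parts: an unconditional identification of $H_0(\mathbb L)$ with $\Omega_{A|\bs k}$, and the vanishing $H_1(\mathbb L)=0$, which is where the reducedness and complete-intersection hypotheses enter. For the first part I would note that the prescribed map is a chain map because $\partial\varepsilon_\mu=\sum_i(\partial f_\mu/\partial x_i)e_i$ is sent to $\dR f_\mu=0$ in $\Omega_{A|\bs k}$, and that $H_0(\mathbb L)=(\Omega_{S|\bs k}\otimes_S A)/\langle \dR f_\mu\otimes 1\rangle$ is, by the right-exact conormal (second fundamental) sequence $I/I^2\to\Omega_{S|\bs k}\otimes_S A\to\Omega_{A|\bs k}\to 0$, canonically isomorphic to $\Omega_{A|\bs k}$, compatibly with our map. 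This step needs no hypothesis on $\bs f$. Hence everything reduces to showing $H_1(\mathbb L)=\ker\big(\partial\colon A^k\to A^n\big)=0$.

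For the vanishing of $H_1$ I would argue by localizing at the minimal primes of $A$. Since $\bs f$ is reduced, $A$ is a reduced Noetherian ring, so for each minimal prime $\mathfrak p$ the local ring $A_{\mathfrak p}$ is a field; and since $\operatorname{char}\bs k=0$ this field is automatically a smooth (geometrically regular) $\bs k$-algebra. Because $\bs f$ is a complete intersection, $f_1,\dots,f_k$ is a regular sequence in $S$, so after contracting $\mathfrak p$ to $\mathfrak q\subset S$ the ideal $IS_{\mathfrak q}$ is generated by a regular sequence of length $k$ and $\dim S_{\mathfrak q}=k$. The Jacobian criterion applied to the presentation $A_{\mathfrak p}=S_{\mathfrak q}/IS_{\mathfrak q}$ then forces the matrix $(\partial f_\mu/\partial x_i)$ to have rank $k$ over $A_{\mathfrak p}$; being a linear map from a $k$-dimensional $A_{\mathfrak p}$-vector space of rank $k$, $\partial_{\mathfrak p}$ is injective. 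Thus $(\ker\partial)_{\mathfrak p}=0$ for every minimal prime $\mathfrak p$ of $A$.

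To pass from the minimal primes to the global statement I would use associated primes: $\ker\partial$ is a submodule of the free module $A^k$, so $\operatorname{Ass}_A(\ker\partial)\subseteq\operatorname{Ass}_A(A^k)=\operatorname{Ass}_A(A)$, and for the reduced ring $A$ the right-hand side is exactly the set of minimal primes. If $\ker\partial$ were nonzero it would admit an associated prime, necessarily minimal, at which it would localize nontrivially, contradicting the previous paragraph. Hence $\ker\partial=0$, so $\mathbb L$ is exact in homological degree $1$ and $\mathbb L\to\Omega_{A|\bs k}$ is a quasi-isomorphism. I expect the delicate point to be the second paragraph: one must import generic smoothness in characteristic zero to know that a reduced complete intersection is smooth at each generic point, being a little careful that $A_{\mathfrak p}$, although a field, is typically transcendental over $\bs k$, so that its smoothness over $\bs k$ genuinely relies on separability (automatic in characteristic zero) rather than on finiteness.
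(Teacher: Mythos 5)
Your argument is correct, but it is not the route the paper takes: the paper disposes of this lemma with a citation to Kunz (Theorem 9.5), i.e.\ the standard fact that for an ideal generated by a regular sequence with $A$ reduced the conormal map $I/I^2\to\Omega_{S|\bs k}\otimes_S A$ is injective, together with the identification of $\mathbb L_1=A^k$ with the free module $I/I^2$ on the classes of the $f_\mu$. You instead reprove this from scratch: the unconditional identification $H_0(\mathbb L)\cong\Omega_{A|\bs k}$ via the right-exact conormal sequence is fine, and your treatment of $H_1=\ker\partial$ — localize at a minimal prime $\mathfrak p$, note $A_{\mathfrak p}=\kappa(\mathfrak q)$ is a field, use characteristic $0$ to upgrade regular to smooth, use the complete-intersection hypothesis to get $\operatorname{ht}(IS_{\mathfrak q})=k$ so the Jacobian criterion forces rank $k$ and hence injectivity of $\partial_{\mathfrak p}$, then globalize via $\operatorname{Ass}(\ker\partial)\subseteq\operatorname{Ass}(A^k)=\operatorname{Min}(A)$ for the reduced ring $A$ — is a sound and complete chain of reasoning. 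One point worth making explicit: the paper defines ``complete intersection'' by vanishing of higher Koszul homology, so when you assert that $f_1,\dots,f_k$ is a regular sequence in $S$ and that every minimal prime over $I$ has height $k$, you are implicitly invoking the equivalence of these notions in the graded setting (the $f_\mu$ homogeneous of positive degree); alternatively one can localize the Koszul complex directly, since Koszul homology commutes with localization, and then run the grade/height count in $S_{\mathfrak q}$. The trade-off between the two approaches: the paper's citation is shorter, while your argument is self-contained and makes visible exactly where reducedness and characteristic zero enter (generic smoothness at the minimal primes), which the bare reference conceals.
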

\begin{proof} See for example \cite[Theorem 9.5]{Kunz}.
\end{proof}

% Serres condition on reducedness
% Any localization at a minimal prime ring becomes artinian + Auslander-Buchsbaum

We caution the reader that the symbols $e_i$ and $\epsilon_\mu$ will be  written later as $\dR x_i$ and $\dR y_\mu$, respectively, see Section \ref{sec:cotangent}. To avoid confusion, in this section, we stick to the notation $e_i$ and $\epsilon_\mu$.

\begin{theorem}\label{thm:ciLiealgebroid} If $\bs f$ is a complete intersection, there is a graded Lie bracket on $\mathbb L$ defined by
\begin{align} \label{eq:Koszulbracketci}
[e_i,e_j]:=\sum_{k=1}^n {\partial \Lambda _{ij}\over \partial x_k}e_k, \qquad [e_i,\varepsilon_\mu]:=\sum_{\nu=1}^k Z_{i\mu}^\nu \varepsilon_\nu,
\end{align}
making $(\mathbb L,\partial, [\:,\:])$ a dg Lie algebroid over $\Spec(A)$ with anchor $\rho: \mathbb L\to D_A$ given by $e_i\mapsto \{x_i,\:\}$ and $\varepsilon_\mu\mapsto 0$.
The coefficients in \eqref{eq:Koszulbracketci} are understood to be taken modulo $I$.
Moreover, the $A$-linear quasi-isomorphism $\mathbb L\to \Omega_{A|\bs k}$ is compatible with the brackets.
We have that the internal degree of $[\:,\:]$ coincides with the internal degree of $\{\:,\:\}$ while $\deg\partial=0$.
\end{theorem}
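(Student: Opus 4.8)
\emph{Proof strategy.} As a graded $A$-module, $\mathbb L=\mathbb L_0\oplus\mathbb L_1$ is free on the degree-$0$ generators $e_1,\dots,e_n$ and the generators $\varepsilon_1,\dots,\varepsilon_k$ sitting in $L_\infty$-degree $-1$. My plan is to \emph{define} the bracket on these generators by \eqref{eq:Koszulbracketci} together with $[\varepsilon_\mu,\varepsilon_\nu]:=0$, reading the coefficients $\partial\Lambda_{ij}/\partial x_k$ and $Z_{i\mu}^\nu$ in $A$; this is unambiguous because, $\bs f$ being a complete intersection, the class of $Z$ in $\mathrm C^1_{\operatorname{Poiss}}(S)\otimes_S\mathfrak{gl}_k(A)$ is canonical by Theorem \ref{thm:MC}. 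I then extend the bracket to all of $\mathbb L$ by graded antisymmetry together with the Leibniz rules of Definition \ref{def:Linftyalgebroid}(3), and similarly extend $\rho$ from its values $e_i\mapsto\{x_i,\:\}$, $\varepsilon_\mu\mapsto0$. Consistency of this prescription and graded antisymmetry are immediate, because $\rho_{\varepsilon_\mu}=0$ forces every $\rho$-correction term to involve only degree-$0$ elements, so no Koszul signs intervene there. What then remains is to verify, in order: that $\partial=[\:]_1$ is $A$-linear with $\partial^2=0$ (both trivial, $\mathbb L$ having only two terms); that $\partial$ is a graded derivation of $[\:,\:]$; that $[\:,\:]$ satisfies the graded Jacobi identity; that $\rho$ makes $A$ an $L_\infty$-module over $\mathbb L$; and finally the internal-degree count and the compatibility with $\mathbb L\to\Omega_{A|\bs k}$.

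For the derivation property, only arguments landing in $\mathbb L_1$ matter, since $\partial$ kills $e_i$ and $\partial e_k=0$. On $[e_i,\varepsilon_\mu]$ I would compute $\partial[e_i,\varepsilon_\mu]=\sum_\nu Z_{i\mu}^\nu\partial\varepsilon_\nu$, whose $e_k$-component is $\sum_\nu Z_{i\mu}^\nu\partial f_\nu/\partial x_k$; differentiating \eqref{eq:theZs} with respect to $x_k$ and discarding the terms $(\partial Z_{i\mu}^\nu/\partial x_k)f_\nu\in I$ identifies this, modulo $I$, with $\partial\{x_i,f_\mu\}/\partial x_k$. On the other side, expanding $[e_i,\partial\varepsilon_\mu]$ by the Leibniz rule and using $\{x_i,\:\}=\sum_j\Lambda_{ij}\partial/\partial x_j$ gives $e_k$-component $\sum_j(\partial f_\mu/\partial x_j)(\partial\Lambda_{ij}/\partial x_k)+\{x_i,\partial f_\mu/\partial x_k\}$, which is exactly $\partial\{x_i,f_\mu\}/\partial x_k$. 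The instance of the derivation identity pairing $\partial\varepsilon_\mu$ against $\varepsilon_\nu$ (and $\varepsilon_\mu$ against $\partial\varepsilon_\nu$) reduces, after unwinding the brackets, to the claim $\mathcal A_{\mu\nu}^\lambda\in I$; this holds because the identity $\sum_\lambda f_\lambda\mathcal A_{\mu\nu}^\lambda=\{f_\mu,f_\nu\}+\{f_\nu,f_\mu\}=0$ exhibits $(\mathcal A_{\mu\nu}^\lambda)_\lambda$ as a Koszul $1$-cycle, hence -- as $\bs f$ is a complete intersection -- a Koszul $1$-boundary, all of whose components lie in $I$.

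For Jacobi there are four cases, according to how many of the three arguments lie in $\mathbb L_1$. Two or three such arguments make the identity vacuous: each bracket appearing is either $[\varepsilon_\mu,\varepsilon_\nu]=0$, or an expression $[\varepsilon_\mu,a\varepsilon_\nu]=a[\varepsilon_\mu,\varepsilon_\nu]+(\rho_{\varepsilon_\mu}a)\varepsilon_\nu=0$, or lands in the zero graded piece. The case $[e_i,[e_j,e_k]]+\text{cyclic}$ is the Jacobi identity for the Koszul bracket \eqref{eq:Koszulbr} on $\Omega_{S|\bs k}\cong S^n$ -- the Lie algebroid of the smooth Poisson algebra $S$ \cite{Huebschmann} -- pushed forward to $A^n=\mathbb L_0$; this push-forward is legitimate because $I$ is a Poisson ideal, so $I\cdot S^n$ is stable under both bracket and anchor. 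The one substantive case is $[e_i,[e_j,\varepsilon_\mu]]-[e_j,[e_i,\varepsilon_\mu]]-[[e_i,e_j],\varepsilon_\mu]=0$: expanding via the Leibniz rule, its $\varepsilon_\nu$-component is $\{x_i,Z_{j\mu}^\nu\}-\{x_j,Z_{i\mu}^\nu\}-\sum_\ell Z_{\ell\mu}^\nu\,\partial\Lambda_{ij}/\partial x_\ell-([Z_i,Z_j])_\mu^\nu$, which is precisely the Koszul $1$-cycle shown in the proof of Theorem \ref{thm:MC} to be a Koszul $1$-boundary, hence $\equiv0\pmod I$. I expect this step -- recognising the Jacobi obstruction and the $\mathcal A$-obstruction as Koszul $1$-cycles that vanish modulo $I$ exactly because $\bs f$ is a complete intersection -- to be the heart of the matter; it is handed to us by Theorem \ref{thm:MC}, so the remaining work is only careful tracking of Koszul signs in the graded Jacobi identity.

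Finally, $\rho$ turns $A$ into an $L_\infty$-module over $\mathbb L$: here $\rho_1=0$; the assignment $e_i\mapsto\{x_i,\:\}$ is a morphism of Lie algebras into the Hamiltonian derivations of $A$, since $[e_i,e_j]=\sum_k(\partial\Lambda_{ij}/\partial x_k)e_k\mapsto\{\Lambda_{ij},\:\}=[\{x_i,\:\},\{x_j,\:\}]$ by the Poisson Jacobi identity; and $\rho$ kills $\Bild\partial$ because $\rho(\partial\varepsilon_\mu)=\{f_\mu,\:\}$ acts trivially on $A$, $\{f_\mu,S\}$ lying in $I$. The Leibniz identities of Definition \ref{def:Linftyalgebroid}(3) hold by construction and each $\mathbb L^k$ is free, hence finitely generated projective, so $(\mathbb L,\partial,[\:,\:])$ is a dg Lie algebroid over $\Spec(A)$. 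The internal-degree statement is a direct count from $\deg\varepsilon_\mu=\deg f_\mu$ and $\deg Z_{i\mu}^\nu=\deg x_i+\deg f_\mu+\deg\{\:,\:\}-\deg f_\nu$, which give $\deg\partial=0$ and $\deg[\:,\:]=\deg\{\:,\:\}$. Lastly, the $A$-linear map $\mathbb L\to\Omega_{A|\bs k}$ sending $e_i\mapsto\dR x_i$, $\varepsilon_\mu\mapsto0$ carries $[e_i,e_j]$ to $\dR\{x_i,x_j\}=[\dR x_i,\dR x_j]$ and $[e_i,\varepsilon_\mu]$ to $0=[\dR x_i,0]$ and intertwines the anchors, so it is a morphism of dg Lie algebroids -- a quasi-isomorphism, when $\bs f$ is reduced, by the preceding lemma.
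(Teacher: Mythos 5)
Your proposal is correct and follows essentially the same route as the paper: the derivation property rests on the identity $\sum_\nu Z_{i\mu}^\nu\partial f_\nu/\partial x_k\equiv\partial\{x_i,f_\mu\}/\partial x_k \pmod I$ and on the Koszul $1$-cycle $\mathcal A_{\mu\nu}^\lambda y_\lambda$ being a boundary (complete intersection), the mixed Jacobi case is exactly the Maurer--Cartan identity of Theorem \ref{thm:MC} modulo $I$, and the purely degree-$0$ Jacobi case comes from the Poisson Jacobi identity. The only (cosmetic) differences are that you reduce to generators via the anchor-morphism property rather than the paper's explicit $A$-trilinearity computation of the Jacobiator, and you invoke the Lie--Rinehart structure on $\Omega_{S|\bs k}$ where the paper uses the exactness of $1$-forms over $S$ and $[\dR U,\dR V]=\dR\{U,V\}$.
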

\begin{proof} We use the shorthand notations from the proof of Theorem \ref{thm:MC}. To verify the Leibniz rule
\begin{align*}
\partial[X,Y]=[\partial X,Y]+(-1)^{|X|}[X,\partial Y]
\end{align*}
we have to address two cases: the bracket on the lefthand side evaluated on $\mathbb L_0\times\mathbb L_1$ or on
$\mathbb L_1\times\mathbb L_1$. In the former case the Leibniz rule spells out as
\begin{align}\label{eq:LeibnizL0L1}
\partial\left[U^ie_i,\xi^\mu\varepsilon_\mu\right]\sim\left[U^ie_i,\partial(\xi^\mu\varepsilon_\mu)\right]
\end{align}
where $U^i,\xi^\mu\in S$.
The lefthand side of \eqref{eq:LeibnizL0L1} can be understood as
\begin{align*}
\partial\left[U^ie_i,\xi^\mu\varepsilon_\mu\right]=\partial\left( U^i\xi^\mu Z_{i\mu}^\nu \varepsilon_\nu+U^i\{x_i,\xi^\mu\}\varepsilon_\mu\right)= U^i\xi^\mu Z_{i\mu}^\nu {\partial f_\nu\over \partial x_k}e_k+U^i\Lambda_{ij}{\partial \xi^\mu\over \partial x_j}{\partial f_\mu\over \partial x_k}e_k.
\end{align*}
We have the freedom to add terms in $I$,
\begin{align*}
Z_{i\mu}^\nu {\partial f_\nu\over \partial x_k}\sim Z_{i\mu}^\nu {\partial f_\nu\over \partial x_k}+{\partial  Z_{i\mu}^\nu \over \partial x_k}f_\nu
={\partial \over \partial x_k}\left( \{x_i,f_\mu\}\right)={\partial \over \partial x_k}\left( \Lambda_{ij}{\partial f_\mu\over \partial x_j}\right)
={\partial \Lambda_{ij}\over \partial x_k}{\partial f_\mu\over \partial x_j}+\Lambda_{ij}{\partial^2 f_\mu\over \partial x_k\partial x_j}e_k.
\end{align*}
Hence
\begin{align*}
&\partial\left[U^ie_i,\xi^\mu\varepsilon_\mu\right]\sim U^i\xi^\mu{\partial \Lambda_{ij}\over \partial x_k}{\partial f_\mu\over \partial x_j}e_k+  U^i\xi^\mu \Lambda_{ij}{\partial^2 f_\mu\over \partial x_k\partial x_j} e_k+U^i\Lambda_{ij}{\partial \xi^\mu\over \partial x_j}{\partial f_\mu\over \partial x_k}e_k\\
&= U^i\xi^\mu{\partial \Lambda_{ij}\over \partial x_k}{\partial f_\mu\over \partial x_j}e_k+ U^i\xi^\mu \Lambda_{ij}{\partial \over \partial x_j}\left(\xi^\mu{\partial f_\mu\over \partial x_k}\right)
=U^i\xi^\mu{\partial \Lambda_{ij}\over \partial x_k}{\partial f_\mu\over \partial x_j}e_k + U^i\xi^\mu \left\{x_i,\xi^\mu{\partial f_\mu\over \partial x_k}\right\}\\
&=\left[U^ie_i,\xi^\mu{\partial f_\mu\over \partial x_j}e_j\right]=\left[U^ie_i,\partial(\xi^\mu\varepsilon_\mu)\right],
\end{align*}
establishing \eqref{eq:LeibnizL0L1}.

The other case boils down to
\begin{align}\label{eq:LeibnizL1L1}
\left[\partial(\xi^\mu\varepsilon_\mu),\eta^\mu\varepsilon_\nu\right]\sim\left[\xi^\mu\varepsilon_\mu,\partial(\eta^\nu\varepsilon_\nu)\right]
\end{align}
where $\xi^\mu, \eta^\nu\in S$. To show this we need an auxiliary result. Namely, we
claim that (cf. Equation \eqref{eq:Amunu})
\begin{align}\label{eq:alphamunu}
\alpha_{\mu\nu}:=\left( {\partial f_\mu\over \partial x_i}Z_{i\nu}^\lambda+ {\partial f_\nu\over \partial x_i}Z_{i\mu}^\lambda\right)y_\lambda=\mathcal A_{\mu\nu}^\lambda y_\lambda
\end{align}
is a Koszul $1$-cycle. This implies of course that $\alpha_{\mu\nu}$ is a Koszul $1$-boundary and as such $\sim 0$, meaning that
\begin{align}
{\partial f_\mu\over \partial x_i}Z_{i\nu}^\lambda\sim -{\partial f_\nu\over \partial x_i}Z_{i\mu}^\lambda.
\end{align}
The claim follows easily using relation \eqref{eq:ZLambdaidentity}:
\begin{align*}
\partial \alpha_{\mu\nu}=\left( {\partial f_\mu\over \partial x_i}Z_{i\nu}^\lambda+ {\partial f_\nu\over \partial x_i}Z_{i\mu}^\lambda\right)f_\lambda={\partial f_\mu\over \partial x_i}{\partial f_\nu\over \partial x_j}\Lambda_{ij}+{\partial f_\nu\over \partial x_i}{\partial f_\mu\over \partial x_j}\Lambda_{ij}=0.
\end{align*}
We remind the reader that terms of the form $\{f_\mu,\eta^\nu\}$ and $\{f_\nu,\xi^\mu\}$ are $\sim 0$, since $I$ is a Poisson ideal. Now we are ready to take a  look into
\begin{align*}
\left[\partial(\xi^\mu\varepsilon_\mu),\eta^\nu\varepsilon_\nu\right]
=\left[\xi^\mu {\partial f_\mu\over \partial x_i} e_i,\eta^\nu\varepsilon_\nu\right]
=\xi^\mu \underbrace{{\partial f_\mu\over \partial x_i}\{x_i,\eta^\nu\}}
_{=\{f_\mu,\eta^\nu\}\sim 0}\varepsilon_\nu
+\xi^\mu\eta^\nu {\partial f_\mu\over \partial x_i}Z_{i\nu}^\lambda\varepsilon_\lambda
\sim -\xi^\mu\eta^\nu {\partial f_\nu\over \partial x_i}Z_{i\mu}^\lambda\varepsilon_\lambda.
\end{align*}
Similarly, we find
\begin{align*}
\left[\xi^\mu\varepsilon_\mu,\partial(\eta^\nu\varepsilon_\nu)\right]
=\left[\xi^\mu \varepsilon_\mu,\eta^\mu{\partial f_\nu\over \partial x_i} e_i\right]
=-\eta^\nu {\partial f_\nu\over \partial x_i}\{x_i,\xi^\mu\}
-\xi^\mu\eta^\nu {\partial f_\nu\over \partial x_i}Z_{i\mu}^\lambda\varepsilon_\lambda
\sim -\xi^\mu\eta^\nu {\partial f_\nu\over \partial x_i}Z_{i\mu}^\lambda\varepsilon_\lambda,
\end{align*}
establishing \eqref{eq:LeibnizL1L1}.

In order to check the Jacobi identity, our strategy might be to show that the Jacobiator
\begin{align*}
\operatorname{Jac}:\mathbb L\times\mathbb L\times\mathbb L\to\mathbb L, \quad
\operatorname{Jac}(X,Y,Z)=(-1)^{|X||Z|}[X,[Y,Z]]+(-1)^{|X||Y|}[Y,[Z,X]]+(-1)^{|Y||Z|}[Z,[X,Y]]
\end{align*}
is $A$-trilinear and to check its vanishing on basis elements. There are two cases to consider: $\operatorname{Jac}$ on $\mathbb L_0\times\mathbb L_0\times\mathbb L_0$ and on $\mathbb L_0\times\mathbb L_0\times\mathbb L_1$. However, in the former case there is a more elegant way to check the vanishing of $\operatorname{Jac}(U^ie_i,V^je_j,W^ke_k)$. We remind ourselves that in $\mathbb L_0$, $e_i$ is nothing other than $\dR x_i$. Moreover, in $S$ every $1$-form is exact: given coefficients $U^i\in S$ we can write $U^i\dR x_i=\dR U$ for some function $U\in S$. There are similar functions $V,W\in S$. For our argument we can use the relation
$\left[\dR U,\dR V\right]=\dR\{U,V\}$.
Hence
\[\left[U^ie_i,\left[V^je_j,W^k e_k\right]\right]=\left[\dR U,\left[\dR V,\dR W\right]\right]=\left[\dR U,\dR\{V,W\}\right]=\dR\{U,\{V,W\}\}.\] Therefore the Jacobiator is $\dR$ applied to the Jacobi identity $\{U,\{V,W\}\}+\{V,\{W,U\}\}+\{W,\{U,V\}\}=0$.

The vanishing of $\operatorname{Jac}(e_i,e_j,\varepsilon_\mu)$ is closely related to Theorem \ref{thm:MC}. We leave it to the reader to verify that
$\operatorname{Jac}(e_i,e_j,\varepsilon_\mu)=0$ is a rephrasing of the Maurer-Cartan equation $\dP Z-[Z,Z]=0$.

We are left with the task to prove that $\operatorname{Jac}$ is $A$-trilinear on $\mathbb L_0\times\mathbb L_0\times\mathbb L_1$. To check whether $\operatorname{Jac}(U^ie_i,V^je_j,\xi^\mu\varepsilon_\mu)$ is $A$-linear in the first argument, we notice that $\left[U^ie_i,\left[V^je_j,\xi^\mu\varepsilon_\mu\right]\right]$ is $A$-linear in the first argument.
The claim follows from the calculation
\begin{align*}
\left[V^j e_j,
\left[
\xi^\mu\varepsilon_\mu,U^ie_i
\right]\right]
&=V^j\left[e_j,U^i(-\{x_i,\xi^\nu\}\varepsilon_\nu-\xi^\mu Z_{i\mu}^\nu)\varepsilon_\nu\right]\\
&=-\{x_j,U^i\}V^j\left(\{x_i,\xi^\nu\}\varepsilon_\nu+\xi^\mu Z_{i\mu}^\nu\right)\varepsilon_\nu+\mbox{ $U^i$-linear terms},\\
\left[\xi^\mu\varepsilon_\mu,
\left[
U^ie_i,V^j e_j
\right]\right]&=\left[\xi^\mu\varepsilon_\mu,-\{x_j,U^i\}V^je_j+\mbox{ $U^i$-linear terms}\right]\\
&=-\{x_j,U^i\}V^j\left(-\{x_i,\xi^\nu\}\varepsilon_\nu-\xi^\mu Z_{i\mu}^\nu\right)\varepsilon_\nu+\mbox{ $U^i$-linear terms}.
\end{align*}
By symmetry it follows that $\operatorname{Jac}(U^ie_i,V^je_j,\xi^\mu\varepsilon_\mu)$ is $A$-linear in the second argument. To make sure that $\operatorname{Jac}(e_i,e_j,\xi^\mu\varepsilon_\mu)$ is $A$-linear in the last argument we calculate
\begin{align*}
\left[e_i,
\left[
e_j,\xi^\mu\varepsilon_\mu
\right]\right]
&=\left[e_i,\xi^\mu Z_{j\mu}^\nu\varepsilon_\nu+
\left\{
x_j,\xi^\mu
\right\}\varepsilon_\mu\right]\\
&= \left\{
x_i,\xi^\mu
\right\}Z_{j\mu}^\nu\varepsilon_\nu+\left\{
x_i,\{x_j,\xi^\mu
\}\right\}\varepsilon_\mu+
\left\{
x_j,\xi^\mu
\right\}Z_{i\mu}^\nu\varepsilon_\nu
+\mbox{ $\xi^\mu$-linear terms},\\
\left[e_j,
\left[
\xi^\mu\varepsilon_\mu,e_i
\right]\right]
&=-\left[e_j,\xi^\mu Z_{i\mu}^\nu\varepsilon_\nu+
\left\{
x_i,\xi^\mu
\right\}\varepsilon_\mu\right]\\
&= -\left\{
x_j,\xi^\mu
\right\}Z_{i\mu}^\nu\varepsilon_\nu-\left\{
x_j,\{x_i,\xi^\mu
\}\right\}\varepsilon_\mu-
\left\{
x_i,\xi^\mu
\right\}Z_{j\mu}^\nu\varepsilon_\nu
+\mbox{ $\xi^\mu$-linear terms},\\
\left[\xi^\mu\varepsilon_\mu,
\left[
e_i,e_j
\right]\right]&=-\{\{x_i,x_j\},\xi^\mu\}\varepsilon_\mu+\mbox{ $\xi^\mu$-linear terms}.
\end{align*}
Adding things up, the claim follows from the Jacobi identity for $\{\:,\:\}$.
\end{proof}

The above theorem actually reflects of a more fundamental structure. In fact, we can make the following Ansatz for a differential graded Poisson algebra structure on the Koszul complex:
\begin{align}\label{eq:dgPoisson}
\{x_i,x_j\}=\Lambda_{ij},\quad \{x_i,y_\mu\}:=\sum_\nu Z_{i\mu}^\nu y_\nu,\quad \{y_\mu,y_\nu\}=-\beta_{\mu\nu},
\end{align}
where the $\beta_{\mu\nu}=\beta_{\nu\mu}\in \operatorname K_2(S,\bs f)$ are chosen such that $\partial\beta_{\mu\nu}=\alpha_{\mu\nu}=\sum_\lambda\mathcal A_{\mu\nu}^\lambda y_\lambda$ (cf. Equation \eqref{eq:alphamunu}). The problem with this definition, however, is that the Jacobi identity  is typically violated. More precisely, the Jacobiators $\operatorname{Jac}(x_i,x_j,y_\mu)$ are zero precisely when $\dP Z-[Z,Z]\in \mathrm{C}^1_{\operatorname{Poiss}}(S)\otimes_S\mathfrak{gl}_k(S)$ vanishes. For the Jacobiators $\operatorname{Jac}(x_i,y_\mu,y_\nu)$ and $\operatorname{Jac}(y_\mu,y_\nu,y_\lambda)$ one can work out concrete formulas that are already a bit messy. The only complete intersections known to us with $\dP Z-[Z,Z]$ and all the $\beta_{\mu\nu}$ vanishing are hypersurfaces. The vanishing of $\beta_{\mu\nu}$ is due to the fact that the Koszul complex of a hypersurface is concentrated in homological degree zero and one, while Corollary \ref{cor:principalideal} tells us that $\dP Z-[Z,Z]=\dP Z=0$.

\begin{theorem}\label{thm:principaldgPoisson} Let $I=(f)$ be a principal Poisson ideal in $S$ with $f\ne 0\in S$. Then the Koszul complex $(\operatorname K_\bullet(S,f)=S[y],\partial )$ is a dg Poisson algebra. The dg Poisson structure  characterized by $\partial:y\mapsto f$, $\{x_i,x_j\}=\Lambda_{ij}$ and $\{x_i,y\}=Z_i$. Here the $Z_i\in S$ are uniquely determined up to $I$ by the requirement  $\{x_i,f\}=Z_if$.
\end{theorem}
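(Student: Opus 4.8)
The plan is to specialize the general complete-intersection construction of Theorem~\ref{thm:ciLiealgebroid} and, more importantly, the $\mathrm{dg}$ Poisson Ansatz~\eqref{eq:dgPoisson} to the case $k=1$, and to argue that all the obstructions to the Jacobi identity discussed after Theorem~\ref{thm:ciLiealgebroid} vanish identically for a hypersurface. Concretely, I would set $y=y_1$, $f=f_1$, $Z_i=Z_{i1}^1$, and define the bracket on $\operatorname K_\bullet(S,f)=S[y]$ by extending $\{x_i,x_j\}=\Lambda_{ij}$ and $\{x_i,y\}=Z_i$ as a biderivation of the supercommutative algebra $S[y]$, together with $\{y,y\}=0$ (which is forced since $\operatorname K_2(S,f)=0$, so the element $\beta_{11}$ of the Ansatz must be zero). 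Since $S[y]$ is generated over $\bs k$ by $x_1,\dots,x_n,y$, a graded biderivation is determined by its values on generators, so the bracket is well defined; one only needs $\partial$ to be a graded derivation of this bracket, i.e. a Poisson dg structure.

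First I would record that $Z_i$ exists and is unique modulo $(f)$: existence is the hypothesis that $(f)$ is a Poisson ideal (Equation~\eqref{eq:theZs} with $k=1$), and uniqueness modulo $(f)$ is immediate because $S$ is a domain and $f\neq 0$ (if $Z_if=Z_i'f$ then $Z_i=Z_i'$). Then I would check the three structural identities. (1) Graded antisymmetry and the Leibniz rule hold by construction of the biderivation. (2) The compatibility $\partial\{a,b\}=\{\partial a,b\}+(-1)^{|a|}\{a,\partial b\}$: since both sides are graded derivations in each slot it suffices to verify it on generators, and the only nontrivial check is $\partial\{x_i,y\}=\{x_i,\partial y\}$, i.e. $\partial(Z_i y)=\{x_i,f\}$, which reads $Z_i f=\{x_i,f\}$ --- exactly the defining relation of $Z_i$; the pair $(x_i,x_j)$ is trivial because $\partial x_i=0$, and $(y,y)$ gives $0=0$. (3) The Jacobi identity. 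Here I would invoke the analysis following Theorem~\ref{thm:ciLiealgebroid}: $\operatorname{Jac}(x_i,x_j,x_k)$ is $\dR$ applied to the Jacobi identity of $\{\:,\:\}$ on $S$ (or simply: it lands in $S[y]$ in $y$-degree $0$ and equals the Jacobiator of the original Poisson bracket, which vanishes); $\operatorname{Jac}(x_i,x_j,y)$ vanishes iff $\dP Z-[Z,Z]$ vanishes in $\mathrm C^1_{\operatorname{Poiss}}(S)\otimes_S\mathfrak{gl}_1(S)$, and by Corollary~\ref{cor:principalideal} we have $[Z,Z]=0$ (the commutator in $\mathfrak{gl}_1$ is zero) and $\dP Z=0$; finally $\operatorname{Jac}(x_i,y,y)$ and $\operatorname{Jac}(y,y,y)$ involve only the coefficients $\beta_{\mu\nu}$, which are forced to be $0$ since $\operatorname K_p(S,f)=0$ for $p\geq 2$, so these Jacobiators vanish trivially. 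Since $\operatorname{Jac}$ is a graded derivation in each argument once it vanishes on generators, this finishes the Jacobi identity.

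The uniqueness clause ("characterized by") is then the observation that any dg Poisson structure on $S[y]$ restricting to the given one on $S$ and with $\partial y=f$ must have $\{x_i,y\}\in\operatorname K_1(S,f)=S\cdot y$, say $\{x_i,y\}=W_iy$, and the derivation property of $\partial$ forces $W_i f=\{x_i,f\}$, hence $W_i-Z_i$ annihilates $f$ and, $S$ being a domain, $W_i=Z_i$; likewise $\{y,y\}\in\operatorname K_2(S,f)=0$.

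I expect the only real content to be step (3), and within it the verification that $\dP Z=0$; but this is precisely Corollary~\ref{cor:principalideal}, so the proof is essentially an assembly of already-established facts. The mild technical point to be careful about is the reduction of each Jacobiator check to generators: one should note that for a graded biderivation $\{\:,\:\}$ the trilinear map $(a,b,c)\mapsto\operatorname{Jac}(a,b,c)$ is a graded derivation in each slot, so that vanishing on the finite generating set $\{x_1,\dots,x_n,y\}$ implies vanishing everywhere --- this is the standard polarization argument and needs only a sentence. No homological perturbation or resolvent machinery is required here, in contrast with Theorem~\ref{thm:homotopyPoisson}; the point of stating this theorem separately is exactly that for a principal ideal the Koszul complex already carries the honest $\mathrm{dg}$ Poisson structure, with no higher brackets.
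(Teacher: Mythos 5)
Your proposal follows essentially the same route as the paper: there the theorem rests on the Ansatz \eqref{eq:dgPoisson} specialized to $k=1$, where $\beta_{11}$ is forced to vanish because $\operatorname K_p(S,f)=0$ for $p\ge 2$, and where $\operatorname{Jac}(x_i,x_j,y)=0$ follows from Corollary \ref{cor:principalideal} (the commutator $[Z,Z]$ being trivial in $\mathfrak{gl}_1$). Your reduction of the Jacobiator to generators, the treatment of $\operatorname{Jac}(x_i,y,y)$ and $\operatorname{Jac}(y,y,y)$ via $y^2=0$, and the uniqueness argument are all in order.

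One inaccuracy to repair: in your step (2) the pair $(y,y)$ is not ``$0=0$''. Since $\{y,y\}=0$ is forced by degree, compatibility of $\partial$ with the bracket on this pair reads
\begin{align*}
0=\partial\{y,y\}=\{f,y\}-\{y,f\}=2\{f,y\}=2\Bigl(\sum_i \tfrac{\partial f}{\partial x_i}Z_i\Bigr)y,
\end{align*}
so you must verify $\sum_i \frac{\partial f}{\partial x_i}Z_i=0$ in $S$; this is the $k=1$ instance of the requirement $\partial\beta_{11}=\alpha_{11}=\mathcal A_{11}^1y$ from the discussion preceding the theorem, which with $\beta_{11}=0$ demands that $\alpha_{11}$ actually vanish, not merely be a cycle. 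The identity does hold: $\bigl(\sum_i \frac{\partial f}{\partial x_i}Z_i\bigr)f=\sum_i\frac{\partial f}{\partial x_i}\{x_i,f\}=\{f,f\}=0$, and $S$ is a domain with $f\ne 0$, so the coefficient is zero. With this one-line check added, your argument is complete and coincides with the paper's.
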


In general, when $\bs f=(f_1,\dots, f_k)$ is a complete intersection, the Koszul complex $\operatorname K_\bullet(S,\bs f)$ merely carries the structure of a $P_\infty$-algebra. One can find this statement and a sketch of a proof in a paper of Fresse \cite{FresseCI}.
We will show this in Section \ref{sec:homotopystuff}, dropping the hypothesis of $\bs f$ being a complete intersection.

%************ Poisson connection and curvature *******************

\section{Poisson connection on the Koszul complex}\label{sec:connection}

To put the calculations of the previous section into context, we offer here an explanation of the tensor $\dP Z-[Z,Z]$ (defined in  \eqref{eq:MC}) as $-1$ times the curvature of a Poisson connection (see, e.g., \cite{Bursztyn}) on the Koszul complex. Let us recall that a \emph{Poisson module} over the Poisson algebra $(S,\{\:,\:\})$ is an $S$-module $M$ such that $S\oplus M$
\begin{enumerate}
\item is understood as the unique  $\bs k$-algebra extending the $\bs k$-algebra structure on $S$ and the $S$-(bi)module structure on $M$ with the property that $M\cdot M=0$,
\item carries a Poisson bracket $\{\:,\:\}$ that extends the bracket on $S$ such that $\{M,M\}=0$.
\end{enumerate}
This translates into the condition that $\{\:,\:\}:S\times M\to M$ satisfies
\begin{align}
\{\{f_1,f_2\},m\}&=-\{f_2,\{f_1,m\}\}+\{f_1,\{f_2,m\}\},\\
\{f_1,f_2 m\}&=\{f_1,f_2\} m+f_2\{f_1, m\},\\
\{f_1f_2, m\}&=f_2\{f_1,m\}+f_1\{f_2, m\}
\end{align}
for all $f_1,f_2\in S$ and $m\in M$. Conversely, a bracket $\{\:,\:\}:S\times M\to M$ with the above properties extends uniquely to a Poisson bracket on $S\oplus M$ that defines a Poisson module. Note that a free $S$-module is in an obvious way a Poisson module by taking brackets coordinate-wise. In particular, this applies to the $S$-module underlying the Koszul complex $\operatorname K_\bullet(S,\bs f)$.

By a \emph{Poisson connection} on the Poisson module $M$ we mean a $\bs k$-linear map $\nabla:\Omega_{S|\bs k}\otimes_{\bs k} M\to M$, $\alpha\otimes m\mapsto \nabla_\alpha m$, that satisfies
\begin{align}
\nabla_{f\alpha} m&=f\nabla_{\alpha} m, \\
\nabla_{\alpha} (f m)&=f\nabla_{\alpha} m+\langle \alpha,\{f,\:\}\rangle m
\end{align}
for all $f\in S$, $m\in M$, and $\alpha\in\Omega_{S|\bs k}$. Here $\langle\:,\:\rangle$ is used for the pairing of K\"{a}hler forms with derivations of $S$. The connection is uniquely determined by its values $\nabla_{\dR x_i}$ on the generators $\dR x_i$. The \emph{curvature tensor} of the Poisson connection is defined to be
\begin{align}
\mathcal R(\alpha,\beta):=[\nabla_{\alpha},\nabla_{\beta}]-\nabla_{[\alpha,\beta]},
\end{align}
for $\alpha,\beta\in\Omega_{S|\bs k}$. Here we use the Koszul bracket $[\alpha,\beta]$ between $\alpha$ and $\beta$.
It turns out that $\alpha\wedge\beta\mapsto\mathcal  R(\alpha,\beta)$ is an $S$-linear map $ \Omega_{S|\bs k}\wedge_S\Omega_{S|\bs k}\to \operatorname{End}_S(M)$. The Poisson connection $\nabla$ is called \emph{flat} if its curvature tensor vanishes identically.

\begin{proposition} For each $t\in\bs k$ the expression
$\nabla_{\dR x_i}^t:=\{x_i,\:\}+t\sum_{\mu,\nu} Z_{i\mu}^\nu y_\nu\partial/\partial{y_\mu}$ defines a Poisson connection $\nabla^t$ on the $S$-module underlying the Koszul complex $(\operatorname K_\bullet(S,\bs f),\partial)$. It has the property that $\nabla_\alpha$ is a derivation of the product for all $\alpha\in \Omega_{S|\bs k}$.
When we specialize to $t=1$, $\nabla^{1}$ is the unique Poisson connection having this property such that $[\partial,\nabla^{1}_\alpha]=0$ for all $\alpha\in \Omega_{S|\bs k}$. The curvature tensor of $\nabla^{t}$ equals $t\dP Z+t^2[Z,Z]$. This means in particular that  the curvature tensor of $\nabla^{-1}$ is $-\dP Z+[Z,Z]$.
\end{proposition}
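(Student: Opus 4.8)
The plan is to decompose $\nabla^t_{\dR x_i}$ as $H_i+tV_i$, where $H_i:=\{x_i,\:\}$ is the (coordinate-wise) Poisson-module operation on $\operatorname K_\bullet(S,\bs f)=S[y_1,\dots,y_k]$ --- equivalently the unique derivation of $S[y]$ extending $\{x_i,\:\}|_S$ and annihilating every $y_\mu$ --- and $V_i:=\sum_{\mu,\nu}Z_{i\mu}^\nu y_\nu\,\partial/\partial y_\mu$ is an $S$-linear even derivation of $S[y]$ (it annihilates $S$). With this decomposition, well-definedness of the Poisson connection is checked on the $S$-generators $\dR x_i$ of $\Omega_{S|\bs k}$: $S$-linearity in the form-slot is built into the definition, and the Leibniz axiom $\nabla^t_{\dR x_i}(fm)=f\,\nabla^t_{\dR x_i}m+\{x_i,f\}\,m$ follows at once from the Leibniz rule of the Poisson-module structure together with the $S$-linearity of $V_i$ (which is also the reason the anchor is unaffected by the term $tV_i$). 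Since $H_i$ and $V_i$ are derivations of $S[y]$, so is $\sum_i a_i(H_i+tV_i)$ for any $a_i\in S$; hence $\nabla^t_\alpha$ is a derivation of the product for all $\alpha$.

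For the distinguished value $t=1$ I would compute the graded commutators with the Koszul differential $\partial=\sum_\mu f_\mu\,\partial/\partial y_\mu$. Both $[\partial,H_i]$ and $[\partial,V_i]$ are derivations vanishing on the $x_j$, hence are determined by their values on the $y_\mu$: one finds $[\partial,H_i]=-\sum_\mu\{x_i,f_\mu\}\,\partial/\partial y_\mu$, while $[\partial,V_i](y_\mu)=\partial\big(\sum_\nu Z_{i\mu}^\nu y_\nu\big)=\sum_\nu Z_{i\mu}^\nu f_\nu=\{x_i,f_\mu\}$ by \eqref{eq:theZs}, so $[\partial,V_i]=\sum_\mu\{x_i,f_\mu\}\,\partial/\partial y_\mu$. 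Therefore $[\partial,\nabla^t_{\dR x_i}]=(t-1)\sum_\mu\{x_i,f_\mu\}\,\partial/\partial y_\mu$, which vanishes exactly for $t=1$ (when not every $f_\mu$ is a Casimir), so $[\partial,\nabla^1_\alpha]=0$ and this isolates $t=1$ within the family. For uniqueness, the difference $D$ of $\nabla^1$ and any other product-derivation Poisson connection annihilating $\partial$ is, on each $\dR x_i$, an $S$-linear degree-preserving derivation of $S[y]$ with $[\partial,D_{\dR x_i}]=0$; writing $D_{\dR x_i}(y_\mu)=\sum_\nu W_{i\mu}^\nu y_\nu$ this reads $\sum_\nu W_{i\mu}^\nu f_\nu=0$, i.e.\ the rows of $W$ are Koszul $1$-cycles, from which one deduces $D=0$.

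The substantive part is the curvature. Using the Koszul bracket $[\dR x_i,\dR x_j]=\dR\{x_i,x_j\}=\sum_k(\partial\Lambda_{ij}/\partial x_k)\dR x_k$ together with $S$-linearity of $\nabla^t$, I would write $\mathcal R(\dR x_i,\dR x_j)=[H_i+tV_i,\,H_j+tV_j]-\sum_k\tfrac{\partial\Lambda_{ij}}{\partial x_k}(H_k+tV_k)$ and expand in powers of $t$. The $t^0$-term vanishes: the Jacobi identity for $\{\:,\:\}$ gives $[H_i,H_j]=\{\Lambda_{ij},\:\}$, and on $S[y]$ this equals $\sum_k(\partial\Lambda_{ij}/\partial x_k)H_k$ by the Leibniz rule. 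The $t^1$-term $[H_i,V_j]-[H_j,V_i]-\sum_k(\partial\Lambda_{ij}/\partial x_k)V_k$, evaluated on $y_\mu$, equals $\sum_\nu\big(\{x_i,Z_{j\mu}^\nu\}-\{x_j,Z_{i\mu}^\nu\}-\sum_k\tfrac{\partial\Lambda_{ij}}{\partial x_k}Z_{k\mu}^\nu\big)y_\nu$, whose matrix of coefficients is precisely $(\dP Z)(\dR x_i,\dR x_j)$ by \eqref{eq:Poiss2coboundary}; thus the $t^1$-term is the vertical derivation attached to $\dP Z$. The $t^2$-term is $[V_i,V_j]$, and since the passage from a matrix $A$ to the vertical derivation $\sum_{\mu\nu}A_{\mu\nu}y_\nu\,\partial/\partial y_\mu$ is a Lie-algebra anti-homomorphism of $\mathfrak{gl}_k(S)$ into the derivations of $S[y]$, $[V_i,V_j]$ is the vertical derivation determined by the matrix commutators $[Z_i,Z_j]$, i.e.\ by $[Z,Z]\in\mathrm C^2_{\operatorname{Poiss}}(S)\otimes_S\mathfrak{gl}_k(S)$ in the normalization fixed by \eqref{eq:MC}. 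As $\mathcal R$ is $S$-bilinear and alternating, these three evaluations on the basis $\dR x_i\wedge\dR x_j$ determine it, yielding $\mathcal R=t\dP Z+t^2[Z,Z]$ and hence $-\dP Z+[Z,Z]$ at $t=-1$. The one place demanding genuine care is this last identification: keeping the signs straight in the dictionary between $\mathrm C^\bullet_{\operatorname{Poiss}}(S)\otimes_S\mathfrak{gl}_k(S)$ and vertical derivations of the \emph{odd} variables $y_\mu$, and matching the resulting normalization against \eqref{eq:MC}; everything else is a routine, if bookkeeping-heavy, unwinding of the Leibniz and Jacobi identities.
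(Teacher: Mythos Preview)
Your proof follows essentially the same route as the paper's: write $\nabla^t_{\dR x_i}=H_i+tV_i$, check the connection axioms and the derivation property, evaluate $[\partial,\nabla^t_{\dR x_i}]$ on generators, and compute the curvature by expanding $[\nabla^t_{\dR x_i},\nabla^t_{\dR x_j}]-\nabla^t_{[\dR x_i,\dR x_j]}$ in powers of $t$. The curvature computation matches the paper's, and your caveat about sign conventions in the identification of the $t^2$-term with $[Z,Z]$ is appropriate (the map $A\mapsto\sum_{\mu,\nu} A_\mu^\nu y_\nu\,\partial/\partial y_\mu$ is indeed a Lie anti-homomorphism, so one really does have to match this against the normalization of $[Z,Z]$ in \eqref{eq:MC}).

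There is one genuine gap, in your uniqueness argument. From $[\partial,D_{\dR x_i}]=0$ you correctly extract $\sum_\nu W_{i\mu}^\nu f_\nu=0$, but ``the rows of $W$ are Koszul $1$-cycles'' does \emph{not} give $W=0$: the first syzygy module of $(f_1,\dots,f_k)$ is nonzero as soon as $k\ge 2$ (the trivial Koszul relations $f_\mu y_\nu-f_\nu y_\mu$ already witness this), so at best you get $W_{i\mu}^\nu\in I$ in the complete-intersection case, and nothing in general. The paper is no more careful here --- it simply asserts that $[\nabla_{\dR x_i},\partial]=0$ holds ``precisely when $\Gamma_{i\mu}^\nu=Z_{i\mu}^\nu$'' --- so the uniqueness clause in the proposition seems to require either an extra hypothesis or a weaker reading (for instance, uniqueness within the one-parameter family $\nabla^t$, which your computation $[\partial,\nabla^t_{\dR x_i}]=(t-1)\sum_\mu\{x_i,f_\mu\}\,\partial/\partial y_\mu$ does establish).
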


\begin{proof} The most general Poisson connection acting as a derivation takes the form
\begin{align*}
\nabla_{\dR x_i}=\{x_i,\:\}+\sum_{\mu,\nu} \Gamma_{i\mu}^\nu y_\nu\partial/\partial{y_\mu}
\end{align*}
 for some coefficients $\Gamma_{i\mu}^\nu
\in S$ (the so-called Christoffel symbols of the connection). It is easy to verify that the commutator $[\nabla_{\dR x_i},\partial]$ vanishes precisely when $Z_{i\mu}^\nu=\Gamma_{i\mu}^\nu$ for all $\mu,\nu=1,\dots,k$. In the formula for the curvature we implicitly use the canonical identification of $\mathfrak{gl}_k(S)$ with linear vector fields in the odd coordinates $y_\mu$ sending the $k\times k$-matrix $(a_\mu^\nu)_{\mu,\nu=\{1,\dots,k\}}$ to $\sum_{\mu,\nu}a_{\mu}^{\nu}y_\nu\partial/\partial{y_\mu}$. For the curvature tensor of $\nabla^t$ acting on $m\in\operatorname K_\bullet(S,\bs f),\partial)$ we find
\begin{align*}
&[\nabla^t_{\dR x_i},\nabla^t_{\dR x_j}]m-\nabla^t_{[\dR x_i,\dR x_j]}m=[\nabla^t_{\dR x_i},\nabla^t_{\dR x_j}]m-\nabla^t_{\dR \Lambda_{ij}}m\\
&=\left[\{x_i,\:\}+t\sum_{\mu,\nu} Z_{i\mu}^\nu y_\nu{\partial\over\partial{y_\mu}},\{x_j,\:\}+t\sum_{\lambda,\rho} Z_{j\lambda}^\rho y_\rho{\partial\over\partial{y_\lambda}}\right]m\\
&= \{x_i,\{x_j,m\}\}-\{x_j,\{x_i,m\}\}+t\left[\{x_i,\:\},\sum_{\lambda,\rho} Z_{j\lambda}^\rho y_\rho{\partial\over\partial{y_\lambda}}\right]m-t\left[\{x_j,\:\},\sum_{\mu,\nu} Z_{i\mu}^\nu y_\nu{\partial\over\partial{y_\mu}}\right]m\\
&\qquad +\sum_{\mu,\nu}t^2[Z_i,Z_j]_{\mu}^{\nu} y_\nu{\partial m\over\partial{y_\mu}}-\{\{x_i,x_j\},m\}-t\sum_{k,\mu,\nu}{\partial \Lambda_{ij}\over \partial x_k}Z_{k\mu}^\nu y_\nu{\partial m\over\partial{y_\mu}}=\left(t\dP Z+t^2[Z,Z]\right)m. \qedhere
\end{align*}
\end{proof}

%************ Resolvent stuff ********************

\section{The resolvent and the cotangent complex}\label{sec:cotangent}

In this section we recall material concerning the resolvent and the cotangent complex. We essentially follow \cite{Manetti, AvramovInfFree} but adapt the notation to our needs.

By a graded set we mean a countable set $\mathcal I$ with a function $\phi:\mathcal I\to \Z_{>0}$ such that for each $m\ge 0$ the cardinality of $\mathcal I_m:=\phi^{-1}(m)$ is finite. Accordingly $\mathcal I$ decomposes as a disjoint union of finite sets $\mathcal I=\sqcup_{m>0}\mathcal I_m$. To each $i\in \mathcal I_m$ we attach a variable $x_i^{(m)}$ whose parity coincides with the parity of $m$. This means that in the graded polynomial ring over the commutative $\bs k$-algebra $S$
\begin{align*}
S[\bs x]:=S\mleft[x_i^{(m)}\;\middle |\;m \ge 1,i\in \mathcal I_m\mright]
\end{align*}
we have relation $x_i^{(m)}x_j^{(n)}=(-1)^{mn}x_j^{(n)}x_i^{(m)}$.
Let us assign the cohomological degree $|x_i^{(m)}|:=-m$ to the variables $x_i^{(m)}$. By considering only variables up to level $r\ge 0$ we also have the graded polynomial ring in finitely many variables
\begin{align*}
S[\bs x_{\le r}]:=S\mleft[x_i^{(m)}\;\middle |\;1\le m \le r,i\in \mathcal I_m\mright],
\end{align*}
with the convention that $S=S[\bs x_{\le 0}]$.

By a dg $S$-algebra mean a cochain complex $(R,\partial)$ who is at the same time a supercommutative algebra such that $\partial$ is a derivation of the product. This means that for $a,b\in R$ of cohomological degree $|a|$ and $|b|$ we have $ab=(-1)^{|a||b|}ba$ and $\partial(ab)=(\partial a)b+(-1)^{|a|}a\partial b$.

\begin{definition} A dg $S$-algebra $(R,\partial)$ is called \emph{semifree} if the following two conditions hold true.
\begin{enumerate}
\item As an $S$-algebra $R$ is a graded polynomial algebra $S[\bs x]$ over the graded set $\phi:\mathcal I\to \Z_{>0}$.
\item For each $m>0$ and $i\in \mathcal I_m$ we have $\partial( x_i^{(m)})\in S[\bs x_{\le m-1}]$.
\end{enumerate}
Clearly for each $r\ge 0$ $S[\bs x_{\le r}]$ forms a semifree dg subalgebra which we denote by $(R_{\le r},\partial_{\le r})$. We will refer to it as the \emph{approximation of $(R,\partial)$ of level $\le r$}. The canonical algebra map $R\to S$ that sends each variable to zero is denoted by $\kappa$.
\end{definition}

We follow the habit of physics and interpret $\Spec(R_{\le r})$ as an affine supervariety with the homological vector field $\partial$. (If one takes the grading into account people refer to the setting as an NQ-manifold, see e.g. \cite{Strobl}.) $\Spec(R)$ however is merely a projective limit of affine supervarieties and one has to pay attention to issues related to the infinitely many degrees of freedom.
Denoting the image of $x_i^{(m)}$ under $\partial$ by $F_i(\bs x_{\le {m-1}})$ we will find it convenient to write
\begin{align}\label{eq:KoszulTate}
\partial_{\le r}=\sum_{m=1}^r\sum_{j\in \mathcal I_m}
 F_{j}(\bs x_{\le m-1}){\partial\over\partial x_{j}^{(m)}}\quad\mbox{and }\quad \partial=\sum_{m=1}^{\infty}\sum_{j\in  \mathcal I_m} F_{j}(\bs x_{\le m-1}){\partial\over \partial x_{j}^{(m)}}.
\end{align}
The fact that $\partial^2=0$ translates into the condition that for each $n\ge 1$ and $k\in \mathcal I_n$ we have
\begin{align*}
\sum_{m=1}^{n-1}\sum_{j\in \mathcal  I_m} F_j(\bs x_{\le m-1}){\partial F_k(\bs x_{\le n-1})\over\partial x_{j}^{(m)}}=0.
\end{align*}

\begin{definition}
Let $I$ be an ideal in $S$. We say that the semifree $S$-algebra $(R,\partial)$ is a \emph{resolvent} of $S/I$ if the composition of the algebra morphisms $\kappa: R\to S$ and $S\to S/I$ is a quasi-isomorphism, i.e., induces an isomorphism in cohomology. (Here $S/I$ is seen a cochain complex concentrated in degree $0$).
\end{definition}

Let us specialize to the situation when $S=\bs k[x_1,\dots,x_n]$. In this case $A$ admits a resolvent (cf., e.g., \cite[Prop. 2.1.10.]{AvramovInfFree}). It is constructed as follows. Let $f_1,\dots, f_k$ be generators for the ideal $I$. Put $\mathcal I_1:=\{1,\dots ,k\}$ and define $\partial_{\le 1}$ by $x_\mu^{(1)}\mapsto f_\mu$. Notice that $R_{\le 1}$ is nothing but the Koszul complex seen as a cochain complex. It has finite dimensional cohomology in degree $-2$. For each cohomology class pick a representative $F_j(\bs x_{\le 1})\in R_{\le 1}^{-2}$ and define  $\partial_{\le 2}$ by $x_j^{(2)}\mapsto  F_j(\bs x_{\le 1})$. The process continues by induction. That is, for each cohomology class in $H^{-k}R_{\le k-1}$ we a pick representative $F_j(\bs x_{\le k-1})\in R_{\le k-1}^{-k}$ and put $\partial_{\le k}:x_j^{(k)}\mapsto  F_j(\bs x_{\le k-1})$. The resulting resolvent $(R,\partial)$ is unique up to $S$-linear homotopy.

In the situation when $I=(f_1,\dots,f_k)$ is  a homogeneous ideal in $S=\bs k[x_1,\dots,x_n]$ with $\deg(x_i)\ge 1$ for $i=1,\dots,n$ such that $I\subseteq\mathfrak m=(x_1,\dots, x_n)$, we assign to the variable $x_j^{(m)}$'s internal degrees such that $\deg({\partial})=0$. In this way the resolvent $(R,\partial)$ of $S/I$ becomes a bigraded dg algebra.

\begin{definition}
With the above assumptions the resolvent $(R,\partial)$ of $A=S/I$ is called a \emph{minimal model} if
\begin{align*}
\partial(x_j^{(1)})\in \mathfrak m \mbox{ for all }j=1,2,\dots,k, \quad \partial(x_j^{(r)})\in \mathfrak n^2 \mbox{ for all }r\ge 2,\:j\in \mathcal I_r,
\end{align*}
where $\mathfrak n$ is the kernel of the composition of $\kappa: R\to S$ and $S\to\bs k$.
\end{definition}

By \cite[Subsection 7.2]{AvramovInfFree} (see also \cite[Section 4.3]{ACI}) a minimal model exists and is unique up to isomorphism of bigraded dg algebras. All examples of resolvents in this paper are minimal models. Algorithms for computing minimal models have been incorporated into \emph{Macaulay2} \cite{M2} by Frank Moore as the package \emph{dgalgebras} \cite{dgAlgsM2}.

\begin{definition}
Let $(R,\partial)$ be a resolvent of the $S$-algebra $A=S/I$. The \emph{cotangent complex} of $A$ over $\bs k$ is $\mathbb L_{A|\bs k}=\Omega_{R|\bs k}\otimes_R  A$, where $\Omega_{R|\bs k}$ are the K\"{a}hler differentials of $R$ and $\otimes_R$ is the tensor product in the category of complexes of $R$-modules.
\end{definition}

If the ideal $I$ is homogeneous the cotangent complex $\mathbb L_{A|\bs k}$ is bigraded in the obvious way. If $I$ is a complete intersection $\mathbb L_{A|\bs k}$ is actually  isomorphic the complex of Equation \eqref{eq:cicotangent}, where the isomorphism is given by  $\dR x_j\mapsto e_j$ and $\dR y_\mu\mapsto \varepsilon_\mu$ writing  $y_\mu$ for $x^{(1)}_j$.

%************ Homotopy stuff  ********************

\section{The $P_\infty$-algebra and the  $L_\infty$-algebroid}\label{sec:homotopystuff}

The aim of this section is to construct a $P_\infty$-algebra structure on the resolvent $(R,\partial)$ of a Poisson ideal and thereby prove Theorem \ref{thm:homotopyPoisson} and Corollary \ref{cor:homotopyLiealgebroid}. As the principal tool we need to recall the Schouten bracket on the multiderivations of the resolvent. Here we follow Cattaneo and Felder \cite{CF} who defined a Schouten bracket for graded supermanifolds. The adaptation from the setup of smooth supermanifolds to affine supervarieties is straightforward.
The only catch here is to make sure that their formula also makes sense in the case when there are infinitely many generators.

Let $V[m]=\oplus_k V[m]^k$  be the \emph{shift} of a graded vector space $V=\oplus_k V^k$ with components $V[m]^k=V^{m+k}$. The identity map gives rise to a map $\downarrow^m:=[m]:V\to V[m]$ of degree $-m$.
Let us fix $r\ge 0$ and consider, using the notation of Section \ref{sec:cotangent},
the graded symmetric algebra $\mathfrak h_{\le r}:=\operatorname S_{R_{\le r}}(\operatorname{Der}_{R_{\le r}}[-1])$ of the shifted module $\operatorname{Der}_{R_{\le r}}[-1]$ of derivations of the algebra $R_{\le r}$.  Accordingly the coordinate derivations of Section \ref{sec:cotangent} give rise to generators
 \begin{align*}
\xi^i_{(m)}:={\partial\over\partial x_i^{(m)}}[-1]\in\operatorname{Der}_{R_{\le r}}[-1]\subset \mathfrak h_{\le r}, \quad r\ge m\ge 0, i\in \mathcal I_m
\end{align*}
of cohomological degree $m+1$. We can interpret $\mathfrak h_{\le r}$ as a graded polynomial ring
\begin{align*}
\mathfrak h_{\le r}=\bs k\left[x_i^{(m)},\xi^i_{(m)}|r\ge m\ge 0, i\in \mathcal I_m\right].
\end{align*}
The cohomological degree $|\xi^{i_1}_{(m_1)}\cdots\xi^{i_\ell}_{(m_\ell)}|$ of $\xi^{i_1}_{(m_1)}\cdots\xi^{i_\ell}_{(m_\ell)}$ is evidently $m_1+\dots+m_\ell+\ell$.  We extend the differential $\partial_{\le r}$ to $\mathfrak h_{\le r}$ by declaring $\partial_{\le r}\xi^i_{(m)}=0$. Furthermore, we introduce on $\mathfrak h_{\le r}$ the multiplicative \emph{filtration degree} $\fd$ by putting
\begin{align*}
\fd(\xi^{i}_{(m)}):=|\xi^{i}_{(m)}|,\quad \fd(x_{i}^{(m)}):=0.
\end{align*}
Let $\mathcal F^p\mathfrak h_{\le r}$ be the $R_{\le r}$-span of $\{X\in \mathfrak h_{\le r}\mid \fd(X)\ge p\}$. The collection $(\mathcal F^p\mathfrak h_{\le r})_{p\ge 0}$ forms a descending Hausdorff filtration such that $\partial_{\le r}(\mathcal F^p \mathfrak h_{\le r})\subseteq\mathcal F^p \mathfrak h_{\le r}$, i.e, $\mathfrak h_{\le r}$ is a filtered complex. We use the convention that if $p<0$ then $\mathcal F^{p}\mathfrak h_{\le r}=\mathcal F^{0}\mathfrak h_{\le r}$.

From \cite{CF} we know that there is a unique bracket (of cohomological degree $-1$)
$\llbracket\:,\:\rrbracket$, the so-called \emph{Schouten bracket}, on $\mathfrak h_{\le r}$ such that
\begin{enumerate}
\item $XY=(-1)^{|X||Y|}YX$,
\item $\llbracket X,Y\rrbracket=-(-1)^{(|X|-1)(|Y|-1)}\llbracket Y,X\rrbracket$,
%\item $\llbracket XY,Z\rrbracket=X\llbracket Y,Z\rrbracket+(-1)^{|Y|(|Z|-1)}\llbracket X,Z\rrbracket Y$,
\item $\llbracket X,YZ\rrbracket=\llbracket X,Y\rrbracket Z+(-1)^{|Y|(|X|-1)}Y\llbracket X,Z\rrbracket $,
\item $\llbracket X,\llbracket Y,Z\rrbracket\rrbracket=\llbracket \llbracket X,Y\rrbracket,Z\rrbracket+(-1)^{(|X|-1)(|Y|-1)}\llbracket Y,\llbracket X,Z\rrbracket \rrbracket$.
\end{enumerate}
It makes $\mathfrak h_{\le r}$ a \emph{Gerstenhaber algebra} (confer, e.g., \cite{PingXu}) which entails in particular that $\mathfrak h_{\le r}[1]$ is a Lie superalgebra. A more convenient way to work with $\llbracket\:,\:\rrbracket$ is provided by the following formula from \cite{CF}
\begin{align}\label{eq:CFformula}
\llbracket X,Y\rrbracket=\sum_{m= 0}^r\sum_{i\in \mathcal I_m}X{\overleftarrow\partial \over \partial \xi^i_{(m)}}{\overrightarrow\partial \over \partial x_i^{(m)}}Y-X{\overleftarrow\partial \over \partial x_i^{(m)} }{\overrightarrow\partial \over \partial \xi^i_{(m)}}Y.
\end{align}
Here the arrows indicate in which direction the coordinate derivation in question acts. The collection $(\mathfrak h_{\le r})_{r\ge 0}$ forms a directed system of  Gerstenhaber algebras. The Schouten bracket on the direct limit $\mathfrak h:=\cup_{r\ge 0}\mathfrak h_{\le r}$ is given by the formula
\begin{align}\label{eq:CFformulainf}
\llbracket X,Y\rrbracket=\sum_{m= 0}^\infty\sum_{i\in \mathcal I_m}X{\overleftarrow\partial \over \partial \xi^i_{(m)}}{\overrightarrow\partial \over \partial x_i^{(m)}}Y-X{\overleftarrow\partial \over \partial x_i^{(m)} }{\overrightarrow\partial \over \partial \xi^i_{(m)}}Y.
\end{align}

\begin{proposition}\label{prop:Fadic}  The canonical isomorphism $\left(\operatorname{Der}_{R_{\le r}} R_{\le r}\right)[-1]\cong\Hom_{R_{\le r}}(\Omega_{R_{\le r}|\bs k}[1],R_{\le r})$ extents to an injective morphism of $R$-modules $\mathfrak h=\cup_{r\ge 0}\mathfrak h_{\le r}\to \mathfrak g:=\operatorname{Sym}_R(\Omega_{R|\bs k}[1],R)$.
The $\bs k$-vector space $\mathfrak g$ is the completion of $\mathfrak h$ in the $\mathcal F$-adic topology.
There is a unique structure of a Gerstenhaber algebra on $\mathfrak g$
such that $\mathfrak h\to \mathfrak g$ is a morphism of Gerstenhaber algebras.  Likewise, the $\mathcal F$-adic completion of $\mathfrak h_{\le r}$ is $\mathfrak g_{\le r}:=\operatorname{Sym}_{R_{\le r}}(\Omega_{R_{\le r}|\bs k}[1],R_{\le r})$ and the collection $(\mathfrak g_{\le r})_{r\ge 0}$ forms a directed system of $\mathcal F$-adically complete Gerstenhaber algebras.
\end{proposition}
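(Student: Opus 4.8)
The plan is to realize the passage from $\mathfrak h$ to $\mathfrak g$ (and from each $\mathfrak h_{\le r}$ to $\mathfrak g_{\le r}$) as $\mathcal F$-adic completion and to transport the Gerstenhaber structure from $\mathfrak h$ to $\mathfrak g$ by continuity; the only genuine work is verifying that the Cattaneo--Felder formula \eqref{eq:CFformulainf} still converges after completion.

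First I would pin down the filtration. Since $\fd$ is additive on products, with $\fd(\xi^i_{(m)})=m+1\ge 1$ and $\fd(x_i^{(m)})=0$, the $\xi$-weight is an honest $\Z_{\ge 0}$-grading on $\mathfrak h$ refining $\mathcal F$: letting $\mathfrak h^{[p]}$ be the $R$-span of the $\xi$-monomials of weight $p$, one has $\mathfrak h=\bigoplus_{p\ge 0}\mathfrak h^{[p]}$ and $\mathcal F^p\mathfrak h=\bigoplus_{q\ge p}\mathfrak h^{[q]}$, and analogously for $\mathfrak h_{\le r}$. The key finiteness is that each $\mathfrak h^{[p]}$ is a finitely generated free $R$-module, since a $\xi$-monomial of weight $p$ can only involve the finitely many generators $\xi^i_{(m)}$ with $m\le p-1$. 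In particular $\bigcap_p\mathcal F^p\mathfrak h=0$, so $\mathfrak h$ injects into its completion $\varprojlim_p\mathfrak h/\mathcal F^p\mathfrak h=\prod_{p\ge 0}\mathfrak h^{[p]}$, and likewise $\mathfrak h_{\le r}\hookrightarrow\prod_p\mathfrak h_{\le r}^{[p]}$. For $\mathfrak h_{\le r}$ — a polynomial algebra over $R_{\le r}$ in the finitely many, hence finitely generated free, generators $\xi^i_{(m)}$, $m\le r$ — the finite-rank canonical isomorphism $\operatorname{Der}_{R_{\le r}}R_{\le r}[-1]\cong\Hom_{R_{\le r}}(\Omega_{R_{\le r}|\bs k}[1],R_{\le r})$ identifies this completion with $\mathfrak g_{\le r}$; for $\mathfrak h$ it is $\mathfrak g$, which is precisely the assertion that $\mathfrak g$ is the $\mathcal F$-adic completion of $\mathfrak h$. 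This also gives the injectivity of $\mathfrak h\to\mathfrak g$, so the first statement and both ``completion'' clauses are settled.

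Next I would extend the product and the Schouten bracket. Multiplicativity of $\fd$ gives $\mathcal F^p\mathfrak h\cdot\mathcal F^q\mathfrak h\subseteq\mathcal F^{p+q}\mathfrak h$, and inspection of \eqref{eq:CFformula} shows that every summand of $\llbracket X,Y\rrbracket$ is obtained by deleting one $\xi^i_{(m)}$ (of weight $m+1$) from $X$ or from $Y$ and multiplying by a factor of unchanged $\fd$, so $\llbracket\mathcal F^p\mathfrak h,\mathcal F^q\mathfrak h\rrbracket\subseteq\mathcal F^{p+q-1}\mathfrak h$. Hence both operations are uniformly $\mathcal F$-adically continuous and, $\mathfrak h$ being dense in the complete Hausdorff space $\mathfrak g$, they extend uniquely to $\bs k$-bilinear maps on $\mathfrak g$ (and on each $\mathfrak g_{\le r}$). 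That the extended bracket is still computed by \eqref{eq:CFformulainf} comes down to showing that for $X,Y\in\mathfrak g$ the series in $m$ is $\mathcal F$-adically summable. Fixing $p$ and examining the weight-$p$ component of the $m$-th term: in the summand $X\,(\overleftarrow\partial/\partial\xi^i_{(m)})\,(\overrightarrow\partial/\partial x_i^{(m)})\,Y$ the part of $Y$ that can contribute has $\xi$-weight $\le p$, hence is a finite $R$-combination of finitely many monomials whose coefficients all lie in some $R_{\le r_0}$ with $r_0=r_0(p,Y)$; therefore $\overrightarrow\partial/\partial x_i^{(m)}$ kills it as soon as $m>r_0$. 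The other summand is handled symmetrically, bounding the relevant part of $X$. Thus only finitely many $m$ affect each graded component, the series converges, and approximating $X$ and $Y$ by their weight-truncations shows that it computes the continuous extension.

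Finally, the Gerstenhaber axioms (1)--(4) hold on $\mathfrak h$ (they hold on each $\mathfrak h_{\le r}$ by \cite{CF}, hence on the colimit $\mathfrak h$), and they are identities between continuous multilinear maps, so they persist on the closure $\mathfrak g$; this makes $\mathfrak g$ a Gerstenhaber algebra with $\mathfrak h\to\mathfrak g$ a Gerstenhaber morphism, and uniqueness of this structure is forced by density and continuity. The transition maps $\mathfrak h_{\le r}\hookrightarrow\mathfrak h_{\le r'}$ for $r\le r'$ are $\fd$-filtered Gerstenhaber morphisms, so their continuous extensions $\mathfrak g_{\le r}\to\mathfrak g_{\le r'}$ are morphisms of $\mathcal F$-adically complete Gerstenhaber algebras and are manifestly compatible, exhibiting $(\mathfrak g_{\le r})_{r\ge 0}$ as the desired directed system. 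I expect the one genuinely delicate point to be exactly the convergence of \eqref{eq:CFformulainf} on the completion — the ``catch'' flagged in the text when one passes to infinitely many generators — and this is where the local finiteness of the $\mathfrak h^{[p]}$ and the fact that each element of $R$ involves only finitely many of the $x_i^{(m)}$ are essential.
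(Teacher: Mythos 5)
Your identification of the completion and your convergence analysis of \eqref{eq:CFformulainf} are essentially the paper's argument, but the step you use to put the bracket on $\mathfrak g$ in the first place is wrong. The estimate $\llbracket\mathcal F^p\mathfrak h,\mathcal F^q\mathfrak h\rrbracket\subseteq\mathcal F^{p+q-1}\mathfrak h$ does not hold: deleting $\xi^i_{(m)}$ lowers the filtration degree by $m+1$, not by $1$, and $m$ is unbounded on $\mathfrak h$. Concretely, $\llbracket\xi^i_{(m)},x_i^{(m)}\rrbracket=\pm 1\in\mathcal F^0\setminus\mathcal F^1$ while $\xi^i_{(m)}\in\mathcal F^{m+1}\mathfrak h$, so for no $q$ is $\llbracket\mathcal F^q\mathfrak h,\mathfrak h\rrbracket$ contained in $\mathcal F^{1}\mathfrak h$. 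The correct statement is the one in Lemma \ref{lem:filtration}, where the loss $1+\min(r,s)$ depends on the levels of the arguments. Consequently the Schouten bracket is \emph{not} uniformly $\mathcal F$-adically continuous, and the mechanism ``uniformly continuous bilinear map on a dense subspace extends uniquely to the completion'' --- which you also invoke to transport the Gerstenhaber identities and to obtain uniqueness --- is not available. (The product is unproblematic, since $\fd$ is multiplicative.)

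The gap is repairable from within your own text, and the repair is what the paper actually does: take the convergent expression \eqref{eq:CFformulainf} as the \emph{definition} of the bracket on $\mathfrak g$. Your weight-componentwise finiteness argument (the weight $\le p$ part of a series is a finite $R$-combination whose coefficients lie in some $R_{\le r_0}$, so only levels $m\le r_0$ contribute to a fixed weight component) is exactly the required well-definedness statement, and it parallels the paper's observation that a coefficient of bounded cohomological degree cannot depend on high-level variables. But then the Gerstenhaber axioms and the uniqueness clause must also be argued by this local finiteness (each fixed weight component of both sides of an identity involves only finitely many weight components of the inputs and finitely many levels, hence reduces to the corresponding identity in some $\mathfrak h_{\le r}$), rather than by an appeal to continuity of the bracket on $\mathfrak g$ that you have not established.
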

\begin{proof}
Let us convince ourselves that $\mathfrak g=\operatorname{Sym}_R(\Omega_{R|\bs k}[1],R)$ and the completion of $\cup_{r\ge 0}\mathfrak h_{\le r}$ are isomorphic as graded $\bs k$- vector spaces. Recall that $\Omega_{R|\bs k}[1]$ is a free $R$-module generated by
\begin{align}\label{eq:OmegaGenerators}
\dR x_j^{(r)}[1]\quad r\ge 0,\:j\in\mathcal I_r
\end{align}
of cohomological degree $-r$.
For the sake of notational simplicity we will use the graded set $\mathcal I':=I\sqcup\{1,2,\dots, n\}$ with $\phi':\mathcal I'\to \Z_{\ge 0}$ where $\phi'_{|\mathcal I}=\phi$ and $\phi'_{|\{1,2,\dots, n\}}=0$. For the generators in Equation \eqref{eq:OmegaGenerators} we write $\dR X_j$, where $j\in \mathcal I'$.
An element $\Psi\in \mathfrak g$ of cohomological degree $\ell$ is nothing other than an assignment
\begin{align}\label{eq:Gerstiso}
(\dR X_{j_1},\dR X_{j_2},\dots, \dR X_{j_l}) \mapsto \psi_{j_1j_2\dots j_l}\in R,\quad \mbox{ for }j_1,j_2\dots, j_l\in \mathcal I'
\end{align}
with $\psi_{j_1j_2\dots j_l}=\epsilon(\sigma,\bs j)\psi_{j_{\sigma(1)}j_{\sigma(2)}\dots j_{\sigma (l)}}$ for any $\sigma \in \mathrm S_l$. Here $\bs j=(\phi'(j_1),\dots,\phi'(j_l))\in \Z^l$ and $\epsilon(\sigma,\bs j)$ is the Koszul sign of $\sigma$. The cohomological degree of $\psi_{j_1j_2\dots j_l}$ is $|\psi_{j_1j_2\dots j_l}|=\ell+\sum_{m=1}^l|X_{j_m}|=\ell-\sum_{m=1}^l\phi'(j_m)$. The corresponding element in the completion of $\cup_{r\ge 0}\mathfrak h_{\le r}$ is
\begin{align}\label{eq:Fseries}
\psi:=\sum_{l=0}^\infty \sum_{j_1,j_2,\dots ,j_l\in \mathcal I'} \psi_{j_1j_2\dots j_l}\underbrace{\xi^{j_1}_{(\phi'(j_1))}\xi^{j_2}_{(\phi'(j_2))}\cdots \xi^{j_l}_{(\phi'(j_l))}}_{\in \mathcal F^{l+\sum_{m=1}^l\phi'(j_m)}(\cup_{r\ge 0}\mathfrak h_{\le r})}.
\end{align}
Conversely, the Taylor coefficients $\psi_{j_1j_2\dots j_l}$ of any such series of cohomological degree $\ell$ give rise to an element in $\mathfrak g=\operatorname{Sym}_R(\Omega_{R|\bs k}[1],R)$ by Equation \eqref{eq:Gerstiso}.

This isomorphism is actually an isomorphism of graded commutative algebras. To sketch an argument showing this we use the canonical isomorphism $\operatorname{Sym}_R(\Omega_{R|\bs k}[1],R)\cong \operatorname{Hom}_R(\mathrm S_R(\Omega_{R|\bs k}[1]),R)$. Now $H:=\mathrm S_R(\Omega_{R|\bs k}[1])$ is a graded Hopf algebra over $R$ with the obvious multiplication $\mu:H\otimes_R H\to H$ and   the unique comultiplication $\Delta:H\to H\otimes_R H$ such that $\Omega_{R|\bs k}[1]\subset H$ is primitive. The product of $\Psi$ and $\Phi$ in $\mathfrak g$ is given by convolution $\mu\circ\Psi\otimes\Phi\circ \Delta$. It is well known that this restricts to the usual product for the subspace $\mathfrak g_{\le r}$ as the dual Hopf algebra of a graded symmetric algebra over a finitely generated free $R_{\le r}$-module is the graded symmetric Hopf algebra of the dual module. Clearly this product uniquely extends to a product on the $\mathcal F$-adic completion.

It remains to check that the Schouten bracket $\llbracket\psi,\varphi\rrbracket$ of two series $\psi$, $\varphi$ as in Equation \eqref{eq:Fseries} of cohomological degrees $|\psi|, |\varphi|\in\Z$  is well-defined. Let us inspect if
\begin{align*}
\sum_{m\ge 0}\sum_{i\in \mathcal I_m}\psi{\overleftarrow\partial \over \partial \xi^i_{(m)}}{\overrightarrow\partial \over \partial x_i^{(m)}}\varphi
\end{align*}
is well defined. This expression can be spelled out as
\begin{align*}
\sum_{l,m\ge 0}\sum_{j_1,\dots,j_l,k_1,\dots,k_m\in \mathcal I'}\sum_{u=1}^l\pm\psi_{j_1\dots j_l}{\partial\varphi_{k_1\dots k_m}\over \partial X_{j_u}}\xi^{j_1}_{(\phi'(j_1))}\cdots\widehat{\xi^{j_u}_{(\phi'(j_u))}}\cdots \xi^{j_l}_{(\phi'(j_l))}\xi^{k_1}_{(\phi'(k_1))}\cdots \xi^{k_m}_{(\phi'(k_m))},
\end{align*}
where we took the liberty not to specify the signs and used $\:\widehat{\:}\:$ to indicate omission. The issue is now that in principle in this sum the same monomial in the $\xi$'s can occur for infinitely many $j_u$. This is not a problem, however, since $|\varphi_{k_1\dots k_m}|=|\varphi|-\sum_v \phi'(k_v)$. In fact,
 $\varphi_{k_1\dots k_m}$ cannot depend on $X_{j_u}$ if $\phi'(j_u)$ exceeds $|\varphi_{k_1\dots k_m}|$. The second term in \eqref{eq:CFformula} can be taken care of in an analogous manner.
\end{proof}

By a \emph{multiderivation} $X$ we mean an element of $\mathfrak g$. The degree of $X[1]\in\mathfrak g[1]$ is referred to a the \emph{total degree} $\overline X:=|X|-1$.

Next we need to involve the differentials. The differential $\partial_{\le r}$ (see Equation \eqref{eq:KoszulTate}) gives rise to an element
\begin{align}
\pi_0^{\le r}:=\sum_{m=1}^r\sum_{j\in  \mathcal I_m} F_{j}(\bs x_{\le m-1})\xi^{j}_{(m)}\in \mathfrak h_{\le r}.
\end{align}
Likewise, we define an element of $\mathfrak g$ analogous to the differential $\partial$:
\begin{align}
\pi_0:=\sum_{m=1}^\infty\sum_{j\in \mathcal I_m}
 F_{j}(\bs x_{\le m-1})\xi^{j}_{(m)}\in\mathfrak g,
\end{align}
noting that the infinite sum above converges in the $\mathcal F$-adic topology.

With this we can formulate the following lemma whose proof is straightforward and left to the reader.

\begin{lemma}\label{lem:filtration} Let $p,q,r,s$ be integers $\ge 0$. Then
\begin{enumerate}
\item for each $X\in \mathcal F^p\mathfrak h_{\le r}$ we have $\llbracket\pi_0^{\le r},X\rrbracket  \in\partial_{\le r}X+\mathcal F^{p+1}\mathfrak h_{\le r}$, and
\item $\left\llbracket\mathcal F^p\mathfrak h_{\le r},\mathcal F^q\mathfrak h_{\le s}\right\rrbracket\subseteq \mathcal F^{p+q-1-\min(r,s)}\mathfrak h_{\le \max(r,s)}$.
\end{enumerate}
\end{lemma}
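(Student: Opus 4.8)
The plan is to derive both statements directly from the Cattaneo--Felder formula \eqref{eq:CFformula}, using two elementary book\-keeping facts about the filtration degree. First, $\fd$ is multiplicative on $\mathfrak h_{\le r}$, so $\mathcal F^a\mathfrak h_{\le r}\cdot\mathcal F^b\mathfrak h_{\le r}\subseteq\mathcal F^{a+b}\mathfrak h_{\le r}$. Second, a derivative $\overleftarrow\partial/\partial x_i^{(m)}$ or $\overrightarrow\partial/\partial x_i^{(m)}$ in one of the commuting variables acts only on the $R_{\le r}$-coefficient and leaves the $\xi$-content of a monomial unchanged, hence preserves $\mathcal F^q\mathfrak h_{\le r}$; whereas a derivative $\overleftarrow\partial/\partial\xi^i_{(m)}$ or $\overrightarrow\partial/\partial\xi^i_{(m)}$ removes one factor $\xi^i_{(m)}$ of filtration degree $|\xi^i_{(m)}|=m+1$, hence sends $\mathcal F^q\mathfrak h_{\le r}$ into $\mathcal F^{q-m-1}\mathfrak h_{\le r}$ (with the convention $\mathcal F^{<0}=\mathcal F^0$). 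Throughout, the decisive structural input is that $F_j(\bs x_{\le m'-1})$ involves, by construction \eqref{eq:KoszulTate}, only variables $x_i^{(m)}$ of level $m\le m'-1$, so $\partial F_j(\bs x_{\le m'-1})/\partial x_i^{(m)}=0$ whenever $m\ge m'$.

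For part (1) I would substitute $X=\pi_0^{\le r}=\sum_{m\ge 1}\sum_{j\in\mathcal I_m}F_j(\bs x_{\le m-1})\,\xi^j_{(m)}$ and $Y$ equal to the given element (which I will call $X$ again, following the lemma) into \eqref{eq:CFformula}. Since $\pi_0^{\le r}$ is linear in the $\xi$'s with $R_{\le r}$-coefficients, the right derivative $\pi_0^{\le r}\,\overleftarrow\partial/\partial\xi^i_{(m)}$ equals $F_i(\bs x_{\le m-1})$ for $i\in\mathcal I_m$, $m\ge 1$, and is zero for $m=0$; hence the first sum in \eqref{eq:CFformula} contributes $\sum_{m\ge 1}\sum_{i\in\mathcal I_m}F_i(\bs x_{\le m-1})\,\overrightarrow\partial X/\partial x_i^{(m)}$. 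After reconciling the Koszul signs built into the left/right derivatives with the degree of $\pi_0^{\le r}$ (which is $2$, so total degree $1$) and with the definition of $\partial_{\le r}$ as the derivation of $\mathfrak h_{\le r}$ determined by $\partial_{\le r}x_i^{(m)}=F_i$ and $\partial_{\le r}\xi^i_{(m)}=0$, this first sum is precisely $\partial_{\le r}X$. In the second sum, $\pi_0^{\le r}\,\overleftarrow\partial/\partial x_i^{(m)}$ is an $R_{\le r}$-combination of the $\xi^j_{(m')}$ with $m'\ge m+1$ (by the construction-of-$F_j$ remark above), hence lies in $\mathcal F^{m+2}\mathfrak h_{\le r}$, while $\overrightarrow\partial/\partial\xi^i_{(m)}$ carries $X\in\mathcal F^p\mathfrak h_{\le r}$ into $\mathcal F^{p-m-1}\mathfrak h_{\le r}$; by multiplicativity of $\fd$ their product lies in $\mathcal F^{p+1}\mathfrak h_{\le r}$, and summing over $m,i$ keeps us in $\mathcal F^{p+1}\mathfrak h_{\le r}$. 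This is exactly the claim $\llbracket\pi_0^{\le r},X\rrbracket\in\partial_{\le r}X+\mathcal F^{p+1}\mathfrak h_{\le r}$.

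For part (2) I would first observe that $X\in\mathfrak h_{\le r}$ and $Y\in\mathfrak h_{\le s}$ both embed into $\mathfrak h_{\le\max(r,s)}$, and since $(\mathfrak h_{\le r})_{r\ge 0}$ is a directed system of Gerstenhaber algebras the bracket $\llbracket X,Y\rrbracket$ may be computed in $\mathfrak h_{\le\max(r,s)}$, so it certainly lies in $\mathfrak h_{\le\max(r,s)}$. Applying \eqref{eq:CFformula} there, a summand indexed by $(m,i)$ survives only if both of its factors are nonzero; because $X$ contains no variable of level $>r$ and $Y$ none of level $>s$, both the $\xi$-derivatives and $x$-derivatives hitting them force $m\le\min(r,s)$. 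For a summand of the first type, the factor from $X$ has lost one $\xi^i_{(m)}$, hence lies in $\mathcal F^{p-m-1}\mathfrak h_{\le\max(r,s)}$, and the factor from $Y$ is an $x$-derivative, hence stays in $\mathcal F^q\mathfrak h_{\le\max(r,s)}$; their product is in $\mathcal F^{p+q-m-1}\mathfrak h_{\le\max(r,s)}$. For a summand of the second type the roles of $X$ and $Y$ are interchanged and one gets $\mathcal F^{p+q-m-1}$ again. Since $m\le\min(r,s)$ in every surviving summand, all of them, and hence $\llbracket X,Y\rrbracket$, lie in $\mathcal F^{p+q-1-\min(r,s)}\mathfrak h_{\le\max(r,s)}$.

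The computations themselves are routine; I expect the only genuinely fiddly point to be the sign reconciliation in part (1) that identifies the leading term of $\llbracket\pi_0^{\le r},X\rrbracket$ with $\partial_{\le r}X$, since the left/right derivatives in \eqref{eq:CFformula} introduce Koszul signs that must be matched against the extension of $\partial_{\le r}$ to $\mathfrak h_{\le r}$ by $\partial_{\le r}\xi^i_{(m)}=0$. The filtration estimates are immediate once one records that $\fd$ is multiplicative and that only $\xi$-derivatives lower it, and no convergence subtlety intervenes because everything takes place at the finite level $r$ (respectively $\max(r,s)$), i.e.\ inside $\mathfrak h_{\le r}$ rather than its completion $\mathfrak g_{\le r}$.
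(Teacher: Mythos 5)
Your proof is correct: it carries out, via the Cattaneo--Felder formula \eqref{eq:CFformula} together with the observations that only $\xi$-derivatives lower the filtration degree and that $F_j(\bs x_{\le m'-1})$ involves no variables of level $\ge m'$, precisely the routine verification that the paper declares ``straightforward and left to the reader.'' The filtration estimates and the identification of the leading term of $\llbracket\pi_0^{\le r},X\rrbracket$ with $\partial_{\le r}X$ are exactly right, so nothing further is needed beyond the sign check you already flag.
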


Our objective is to find a Maurer-Cartan element $\pi$, i.e., a multiderivation $\pi$ with total degree $\overline{\pi}=1$ that satisfies $\llbracket\pi,\pi\rrbracket  =0$, such that for all $p$ and $X\in\mathcal F^p\mathfrak g$
\begin{align}
\llbracket\pi,X\rrbracket\in\partial X+\dP X+\mathcal F^{p+2}\mathfrak g.
\end{align}
The recipe used is what is sometimes referred to as \emph{homological perturbation theory}. It has been employed, for example, in Fedosov quantization \cite{Fedosov} and in the construction of the BFV-charge \cite{Stasheff}.

For better readability we introduce a special notation in low cohomological degrees. We write $y_\mu$ instead  of $x_i^{(1)}$ and $\mu$ for $i\in \mathcal I_1$, $z_\alpha$ instead  of $x_i^{(2)}$ and $\alpha$ for $i\in \mathcal I_2$ and $w_t$ for $x_i^{(2)}$ using index $t$ for $i\in \mathcal I_3$. Similarly we use $\xi^i$ for $\xi^i_{(0)}$, $\eta^\mu$ for $\xi^i_{(1)}$, $\zeta^\alpha$ for $\xi^i_{(2)}$, and  $\theta^s$ instead of  $\xi^i_{(2)}$. The  first three coefficients of $\partial_{\le r}$ are denoted by $f_\mu$, $g_\alpha$ and $h_t$, i.e.,
\begin{align}
&\partial_{\le 1}=\sum_\mu f_\mu \partial/\partial y_\mu, \quad \partial_{\le 2}=\sum_\mu f_\mu \partial/\partial y_\mu+\sum_\alpha g_\alpha \partial/\partial z_\alpha,\\
\nonumber &\partial_{\le 3}=\sum_\mu f_\mu \partial/\partial y_\mu+\sum_\alpha g_\alpha \partial/\partial z_\alpha+\sum_t h_t\partial/\partial w_t.
\end{align}
For convenience of the reader we record the relevant degrees in a table.
\begin{align*}
\begin{tabular}{c||c|c|c|c|c|c|c|c}
                & $x_i$ &   $y_\mu$ & $z_\alpha$  &  $w_t$ & $\xi^i$ & $\eta^\mu$ & $\zeta^\alpha$ & $\theta^t$  \\\hline\hline
$|\:|$          & $0$ & $-1$  & $-2$ & $-3$ &   $1$ &    $2$ &     $3$ &      $4$\\\hline
$^-$            &$-1$ & $-2$  & $-3$ & $-4$ &   $0$ &    $1$ &     $2$ &      $3$\\\hline
$\fd$           & $0$ & $0$  & $0$ & $0$    &   $1$ &    $2$ &     $3$ &      $4$
\end{tabular}
\end{align*}

Our first approximation of $\pi$ is $\pi_0^{\le 1}+\pi_1$, where
\begin{align*}
\pi_0^{\le 1}:=\sum_{\mu=1}^k f_\mu\eta^\mu\quad\mbox{and }\quad \pi_1:=\sum_{i,j=1}^n {1\over 2}\Lambda_{ij}\xi^i\xi^j.
\end{align*}
When evaluating the bracket
\begin{align*}
\llbracket\pi_0^{\le 1}+\pi_1,\pi_0^{\le 1}+\pi_1\rrbracket  =\llbracket\pi_0^{\le 1},\pi_0^{\le 1}\rrbracket  +2\llbracket\pi_0^{\le 1},\pi_1\rrbracket  +\llbracket\pi_1,\pi_1\rrbracket  =2\llbracket\pi_0^{\le 1},\pi_1\rrbracket  ,
\end{align*}
we notice that $\llbracket\pi_0^{\le 1},\pi_0^{\le 1}\rrbracket  $ and $\llbracket\pi_1,\pi_1\rrbracket  $ vanish. In fact, the vanishing of $\llbracket\pi_1,\pi_1\rrbracket  $ is equivalent to the Jacobi identity of the Poisson bracket on $S$.
On the other hand
\begin{align*}
\llbracket\pi_0^{\le 1}, \pi_1\rrbracket  &=\sum_{i,j,\mu}{\Lambda_{ij}\over 2}\left\llbracket f_\mu\eta^\mu,\xi^i\right\rrbracket\xi^j-{\Lambda_{ij}\over 2}\xi^i\left\llbracket f_\mu\eta^\mu,\xi^j\right\rrbracket
=-\sum_{i,j,\mu}{\Lambda_{ij}\over 2}{\partial f_\mu\over \partial x_i}\eta^\mu\xi^j+{\Lambda_{ij}\over 2}{\partial f_\mu\over \partial x_j}\xi^i\eta^\mu
\\
&=\sum_{i,j,\mu}\Lambda_{ij}{\partial f_\mu\over \partial x_j}\xi^i \eta^\mu=\sum_{i,\mu,\nu}Z_{i\mu}^\nu f_\nu\xi^i \eta^\mu
\end{align*}
is a coboundary for $\partial_{\le 1}$ in cohomological degree $-1$.
The idea is to compensate for this by  introducing $\pi_2:=-\sum_{i,\mu,\nu}Z_{i\mu}^\nu y_\nu\xi^i\eta^\mu=:-\{x_i,y_\mu\}_2\xi^i\eta^\mu$.

At the second step of the recursion we put
\begin{align*}
&\pi_0^{\le 2}:=\sum_{\mu}f_\mu\eta^\mu+\sum_\alpha g_\alpha \zeta^\alpha\in \mathfrak h_{\le 2},\\
&\pi^{\le 2}:=\pi_0^{\le 2}+\pi_1+\pi_2.
\end{align*}
Our task is to work out the individual terms in
\begin{align*}
&\llbracket\pi^{\le 2},\pi^{\le 2}\rrbracket=\overbrace{\llbracket\pi^{\le 2}_0,\pi^{\le 2}_0\rrbracket}^{=0}+\overbrace{\llbracket\pi_1,\pi_1\rrbracket}^{=0}+2\llbracket\pi^{\le 1}_0,\pi_1\rrbracket+ 2\llbracket g_\alpha\zeta^\alpha,\pi_1\rrbracket+2\llbracket\pi^{\le 2}_0,\pi_2\rrbracket+2\llbracket\pi_1,\pi_2\rrbracket+\llbracket\pi_2,\pi_2\rrbracket.
\end{align*}
The result of this calculation is
\begin{align*}
&2\llbracket\pi^{\le 1}_0,\pi_1\rrbracket+2\llbracket\pi^{\le 2}_0,\pi_2\rrbracket=%-2Z_{i\mu}^\nu f_\nu\xi^i \eta^\mu+2Z_{i\mu}^\nu f_\nu\xi^i \eta^\mu-2{\partial f_\mu\over \partial x_i}Z_{i\nu}^\lambda y_\lambda\eta^\mu\eta^\nu+2{\partial g_\alpha\over \partial x_i}Z_{i\mu}^\nu y_\nu\eta^\mu \zeta^\alpha-2{\partial g_\alpha\over \partial y_\mu}Z_{i\mu}^\nu y_\nu\xi^i\zeta^\alpha\\
-\sum_{\mu,\nu,\lambda}\mathcal A_{\mu\nu}^\lambda y_\lambda\eta^\mu\eta^\nu+\sum_{i,\mu,\nu,\alpha}2\left({\partial g_\alpha\over \partial x_i}Z_{i\mu}^\nu y_\nu\eta^\mu \zeta^\alpha-{\partial g_\alpha\over \partial y_\mu}Z_{i\mu}^\nu y_\nu\xi^i\zeta^\alpha\right)\\
&2\llbracket g_\alpha\zeta^\alpha,\pi_1\rrbracket=-2\sum_{i,j,\alpha}\Lambda_{ij}{\partial g_\alpha\over \partial x_j}\xi^i\zeta^\alpha=-2\sum_{i,j,\alpha}\{x_i,g_\alpha\}\xi^i\zeta^\alpha,\\
&2\llbracket\pi_1,\pi_2\rrbracket=\sum_{i,j,\mu,\nu}\left(\{x_i,Z_{j\mu}^\nu\}-\{x_j,Z_{i\mu}^\nu\}-\sum_k
{\partial\Lambda_{ij}\over\partial x_k}Z_{k\mu}^\nu \right)y_\nu\xi^i\xi^j\eta^\mu\\
&\llbracket\pi_2,\pi_2\rrbracket=\sum_{i,j,\mu,\nu,\lambda,\rho}2Z_{i\mu}^\nu{\partial Z_{j\lambda}^\rho\over \partial x_i}y_\nu y_\rho\xi^j\eta^\mu\eta^\lambda -\sum_{i,j,\mu,\nu}(Z_{i\mu}^\nu Z_{j\nu}^\lambda-Z_{j\mu}^\nu Z_{i\nu}^\lambda)y_\lambda\xi^i\xi^j\eta^\mu.
\end{align*}
It turns out that at this step of the iteration we can ignore all terms in $\mathcal F^5\mathfrak h_{\le 2}$ and deduce
\begin{align*}
\llbracket\pi^{\le 2},\pi^{\le 2}\rrbracket+\mathcal F^5\mathfrak h_{\le 2}&=\sum_{i,j,\mu,\nu,}\left(\{x_i,Z_{j\mu}^\nu\}-\{x_j,Z_{i\mu}^\nu\}-\sum_k
{\partial\Lambda_{ij}\over\partial x_k}Z_{k\mu}^\nu \right)y_\nu\xi^i\xi^j\eta^\mu\\
&\quad+
\sum_{i,j,\mu,\nu,\lambda}(Z_{i\mu}^\nu Z_{j\nu}^\lambda-Z_{j\mu}^\nu Z_{i\nu}^\lambda)y_\lambda\xi^i\xi^j\eta^\mu+\sum_{\mu,\nu,\lambda}-\mathcal A_{\mu\nu}^\lambda y_\lambda\eta^\mu\eta^\nu\\
&\quad-2\sum_{i,\alpha}\left(\{x_i,g_\alpha\}+\sum_{\mu,\nu}{\partial g_\alpha\over \partial y_\mu}Z_{i\mu}^\nu y_\nu\right)\xi^i\zeta^\alpha+\mathcal F^5\mathfrak h_{\le 2}\\
&=\sum_{\mu,\nu}2(\dP Z-[Z,Z])_\mu^\nu y_\nu\eta^\mu-\sum_{\mu,\nu,\lambda}\mathcal A_{\mu\nu}^\lambda y_\lambda\eta^\mu\eta^\nu\\
&\quad-2\sum_{i,\alpha}\left(\{x_i,g_\alpha\}+\sum_{\mu,\nu}{\partial g_\alpha\over \partial y_\mu}Z_{i\mu}^\nu y_\nu\right)\xi^i\zeta^\alpha+\mathcal F^5\mathfrak h_{\le 2}.
\end{align*}
We already know that $\partial_{\le 1}$ applied to the first two expressions gives zero (cf. the proof of Theorem \ref{thm:MC} and Equation \eqref{eq:alphamunu}). Let us verify that the last term is also a 1-cocycle:
\begin{align*}
\partial_{\le 1}\{x_i,g_\alpha\}&=\partial_{\le 1}\left(\sum_j\Lambda_{ij}{\partial g_\alpha\over \partial x_j}\right)=\sum_{\mu,j}f_\mu{\partial \over \partial y_\mu}\left(\Lambda_{ij}{\partial g_\alpha\over \partial x_j}\right)=\sum_{\mu,j}\Lambda_{ij}f_\mu{\partial \over \partial x_j}\left({\partial g_\alpha\over \partial y_\mu}\right)\\
&=\sum_{\mu,j}\left(\Lambda_{ij}{\partial \over \partial x_j}\left(f_\mu{\partial g_\alpha\over \partial y_\mu}\right)-\Lambda_{ij}{\partial g_\alpha\over \partial y_\mu}{\partial f_\mu\over \partial x_j}\right)=-\sum_{\mu,\nu}Z_{i\mu}^\nu f_\nu{\partial g_\alpha\over \partial y_\mu}=-\partial_{\le 1}\left(\sum_{\mu,\nu}Z_{i\mu}^\nu y_\nu {\partial g_\alpha\over \partial y_\mu}\right),
\end{align*}
where we have used the fact that $\sum_{\mu}f_\mu \partial g_\alpha/\partial y_\mu=0$.
As the resolvent is exact in cohomological degree $-1$ we can solve the system
\begin{align*}
&-2\partial_{\le 2}\pi_3^{001}=\sum_{\mu,\nu}2(\dP Z-[Z,Z])_\mu^\nu y_\nu\eta^\mu,\\
&-2\partial_{\le 2}\pi_3^{11}=-\sum_{\mu,\nu,\lambda}\mathcal A_{\mu\nu}^\lambda y_\lambda\eta^\mu\eta^\nu,\\
&-2\partial_{\le 2}\pi_3^{02}=-2\sum_{i,\alpha}\left(\{x_i,g_\alpha\}+\sum_{\mu,\nu}{\partial g_\alpha\over \partial y_\mu}Z_{i\mu}^\nu y_\nu\right)\xi^i\zeta^\alpha
\end{align*}
and put $\pi_3=\pi_3^{001}+\pi_3^{11}+\pi_3^{02}$. For the coefficients we use the notation $\pi_3^{001}=\sum_{i,j,\mu}{1\over 2}\{x_i,x_j,y_\mu\}_3\xi^i\xi^j\eta^\mu$, $\pi_3^{11}=\sum_{\mu,\nu}{1\over 2}\{y_\mu,y_\nu\}_2\eta^\mu\eta^\nu$ and $\pi_3^{02}=-\sum_{i,\alpha}\{x_i,z_a\}_2\xi^i\zeta^a$.

The proceeding calculations can be made systematic by the following argument.

\begin{lemma} \label{lem:recursion}
For $m\ge 2$ we can recursively find $\pi_m\in \mathcal F^{m+1}\mathfrak h_{\le m-1}$
%with $\rd{\pi_m}=m-1$
such that
\begin{align}\label{eq:recursion level m}
\left\llbracket\pi_0^{\le m-1}+\sum_{i=1}^{m-1}\pi_i,\pi_0^{\le m-1}+\sum_{i=1}^{m-1}\pi_i\right\rrbracket+\mathcal F^{m+2}\mathfrak h_{\le m-1}=-2\partial_{\le m-1}\pi_m+\mathcal F^{m+2}\mathfrak h_{\le m-1}.
\end{align}
\end{lemma}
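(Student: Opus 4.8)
The plan is to run the usual obstruction-theoretic (``homological perturbation'') induction on $m$, in which $\pi_m$ is extracted as a $\partial_{\le m-1}$-primitive of the leading component of the Maurer--Cartan defect built from the lower $\pi_i$. I would write $\Pi^{(k)}:=\pi_0^{\le k}+\sum_{i=1}^k\pi_i$, so that \eqref{eq:recursion level m} is an assertion about $\Xi_m:=\llbracket\Pi^{(m-1)},\Pi^{(m-1)}\rrbracket$; the base case $m=2$, with $\pi_2=-\sum Z_{i\mu}^\nu y_\nu\xi^i\eta^\mu$, is precisely the computation done just before the statement. The only structural facts the step uses are: the graded Jacobi identity and antisymmetry of $\llbracket\:,\:\rrbracket$; the vanishings $\llbracket\pi_0^{\le r},\pi_0^{\le r}\rrbracket=0$ and $\llbracket\pi_0^{(p)},\pi_0^{(q)}\rrbracket=0$ (both reformulations of $\partial^2=0$ via \eqref{eq:CFformula}, where $\pi_0^{(p)}:=\sum_{j\in\mathcal I_p}F_j\xi^j_{(p)}$ is the level-$p$ piece of $\pi_0$); the two filtration estimates of Lemma \ref{lem:filtration}; and the fact that $\partial_{\le r}$ preserves the $\fd$-grading. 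I would first record the easy bookkeeping that every $\pi_i$ and every $\pi_0^{\le r}$ is homogeneous of cohomological degree $2$, and that $\pi_i\in\mathcal F^{i+1}\mathfrak h_{\le i-1}$ for all $i\ge1$ (by inspection for $i=1$, by the inductive hypothesis for $i\ge2$).

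The first step (``the defect is small'') is to show $\Xi_m\in\mathcal F^{m+1}\mathfrak h_{\le m-1}$. I would write $\Pi^{(m-1)}=\Pi^{(m-2)}+\pi_0^{(m-1)}+\pi_{m-1}$ and expand $\Xi_m$ bilinearly. The term $\llbracket\Pi^{(m-2)},\Pi^{(m-2)}\rrbracket$ is $-2\partial_{\le m-2}\pi_{m-1}$ modulo $\mathcal F^{m+1}\mathfrak h_{\le m-2}$ by the inductive instance of \eqref{eq:recursion level m} at level $m-1$, and $2\llbracket\pi_0^{\le m-2},\pi_{m-1}\rrbracket$ is $2\partial_{\le m-2}\pi_{m-1}$ modulo $\mathcal F^{m+1}\mathfrak h_{\le m-2}$ by Lemma \ref{lem:filtration}(1) (applied to $\pi_{m-1}\in\mathcal F^{m}\mathfrak h_{\le m-2}$); these cancel. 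Every remaining term --- $\llbracket\pi_i,\pi_{m-1}\rrbracket$ and $\llbracket\pi_i,\pi_0^{(m-1)}\rrbracket$ for $i\le m-2$, together with $\llbracket\pi_{m-1},\pi_{m-1}\rrbracket$, $\llbracket\pi_0^{(m-1)},\pi_{m-1}\rrbracket$ and $\llbracket\pi_0^{\le m-2},\pi_0^{(m-1)}\rrbracket$ --- lies in $\mathcal F^{m+1}\mathfrak h_{\le m-1}$ by Lemma \ref{lem:filtration}(2) (the last one also needing $\llbracket\pi_0^{(m-1)},\pi_0^{(m-1)}\rrbracket=0$). The mechanism here is that the ``$-\min(r,s)$'' loss in Lemma \ref{lem:filtration}(2) is exactly absorbed by the bound $\pi_i\in\mathfrak h_{\le i-1}$: each correction term involves only variables of bounded level.

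The second step (``the leading part is a coboundary'') is to identify $\Xi_m$ modulo $\mathcal F^{m+2}$ with $-2\partial_{\le m-1}\pi_m$. From graded Jacobi and antisymmetry I would get $3\llbracket\Pi^{(m-1)},\Xi_m\rrbracket=0$, hence $\llbracket\Pi^{(m-1)},\Xi_m\rrbracket=0$ in characteristic $0$; splitting $\Pi^{(m-1)}=\pi_0^{\le m-1}+\sum_i\pi_i$ and applying Lemma \ref{lem:filtration}(1) to $\llbracket\pi_0^{\le m-1},\Xi_m\rrbracket$ and Lemma \ref{lem:filtration}(2) to the $\llbracket\pi_i,\Xi_m\rrbracket$ yields $\partial_{\le m-1}\Xi_m\in\mathcal F^{m+2}\mathfrak h_{\le m-1}$, so (since $\partial_{\le m-1}$ respects $\fd$) the $\fd=m+1$ component $\Xi_m^{[m+1]}$ is a $\partial_{\le m-1}$-cocycle. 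Now $\Xi_m^{[m+1]}$ is a combination of $\xi$-monomials $M$ with $\fd(M)=m+1$, and since $\Xi_m$ has cohomological degree $3$ and $|M|=\fd(M)$, the coefficient of each $M$ is a cocycle of $R_{\le m-1}$ in cohomological degree $-(m-2)$; for $m\ge3$ that degree is one in which $R_{\le m-1}$ is acyclic (it was built to kill exactly those homologies), and for $m=2$ the degree is $0$ and the coefficient is $\{x_i,f_\mu\}$, a coboundary because $I$ is a Poisson ideal. Collecting a choice of primitives into $\pi_m\in\mathcal F^{m+1}\mathfrak h_{\le m-1}$ gives $\Xi_m^{[m+1]}=-2\partial_{\le m-1}\pi_m$, and $\Xi_m-\Xi_m^{[m+1]}\in\mathcal F^{m+2}\mathfrak h_{\le m-1}$ then yields \eqref{eq:recursion level m}.

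The hardest part will be the bookkeeping in the first step: one must simultaneously control the $\fd$-degree and the level (the $\mathfrak h_{\le r}$) of a fair number of bracket terms, and the argument only closes because of the sharp bound $\pi_i\in\mathcal F^{i+1}\mathfrak h_{\le i-1}$ that neutralizes the filtration drop in Lemma \ref{lem:filtration}(2). The other subtle point, more conceptual than computational, is the degree count in the second step: it is exactly what forces the leading obstruction into a cohomological degree where $R_{\le m-1}$ is acyclic, and this --- together with the Poisson-ideal hypothesis at the bottom of the induction --- is where the construction genuinely relies on $R$ being a resolvent of a Poisson ideal rather than an arbitrary filtered dg algebra.
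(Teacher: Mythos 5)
Your proposal is correct and follows essentially the same route as the paper's proof: the same induction (just with the index shifted by one), the same decomposition of the Maurer--Cartan defect with the cancellation coming from the inductive hypothesis and Lemma \ref{lem:filtration}(1), the same use of $\partial^2=0$ to kill the $\pi_0$ cross terms, and the same Jacobi-identity argument showing the leading filtration component is a $\partial_{\le m-1}$-cocycle whose primitive exists by acyclicity of the resolvent. The only (welcome) difference is that you spell out the cohomological-degree count justifying the existence of the primitive, which the paper leaves implicit, relying on its explicit low-degree computations.
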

\begin{proof} We proceed by induction on $m\ge 3$. Let us assume Equation \eqref{eq:recursion level m}
holds for $m$. We need to construct $\pi_{m+1}$ such that \eqref{eq:recursion level m} holds after substituting $m\mapsto m+1$. Let us write $X_m:=\pi_0^{\le m}-\pi_0^{\le m-1}$ and decompose
\begin{align}\label{eq:recursion}
&\left\llbracket\pi_0^{\le m}+\sum_{i=1}^{m}\pi_i,\pi_0^{\le m}+\sum_{j=1}^{m}\pi_j\right\rrbracket
=\left\llbracket X_m+\pi_m+\pi_0^{\le m-1}+\sum_{i=1}^{m-1}\pi_i,X_m+\pi_m+\pi_0^{\le m-1}+\sum_{j=1}^{m-1}\pi_j\right\rrbracket\\
\nonumber
&=\left\llbracket\pi_0^{\le m-1}+\sum_{i=1}^{m-1}\pi_i,\pi_0^{\le m-1}+\sum_{j=1}^{m-1}\pi_j\right\rrbracket+\llbracket X_m,X_m\rrbracket  +\llbracket\pi_m,\pi_m\rrbracket
\\\nonumber
&\hspace{5cm}
+ 2\left\llbracket X_m,\pi_0^{\le m-1}+\sum_{i=1}^{m-1}\pi_i\right\rrbracket
+ 2\left\llbracket\pi_m,\pi_0^{\le m}+\sum_{i=1}^{m-1}\pi_i\right\rrbracket.
\end{align}
We wish to show that this is in $\mathcal F^{m+2}\mathfrak h_{\le m}$.
We have $ \llbracket X_m,X_m\rrbracket  =0$ and using Lemma \ref{lem:filtration} we see that for $m\ge j$ we get $\llbracket\pi_m,\pi_j\rrbracket  \in \mathcal F^{m+2}\mathfrak h_{\le m-1}$.
Moreover, $\llbracket X_m,\pi_0^{\le m-1}\rrbracket  =0$ since
\begin{align*}
0=2\left(\partial_{\le m}\right)^2=\left\llbracket\pi_0^{\le m},\pi_0^{\le m}\right\rrbracket=\llbracket X_m,X_m\rrbracket  +2\left\llbracket X_m,\pi_0^{\le m-1}\right\rrbracket+2\left(\partial_{\le m-1}\right)^2.
\end{align*}
Also, if $m>j$ it follows $\llbracket X_m,\pi_j\rrbracket  \in \mathcal F^{j+1+m-(j-1)}\mathfrak h_{\le m}=\mathcal F^{m+2}\mathfrak h_{\le m}$. Finally, we have $\llbracket \pi_m,\pi_0^{\le m}\rrbracket  =\partial_{\le m-1}\pi_m+\mathcal F^{m+2}\mathfrak h_{\le m-1}$, establishing the claim. It turns out that the only terms in Equation \eqref{eq:recursion} that are not necessarily in $\mathfrak h_{\le m-1}$ are the $\llbracket X_m,\pi_j\rrbracket  $. For the coefficients (i.e. the resolvent part) are actually in $\mathfrak h_{\le m-1}$ while the derivation part contains possibly one derivation from $\mathfrak h_{\le m}$.

Let now $A_m\in\mathcal F^{m+2}\mathfrak h_{\le m}\backslash \mathcal F^{m+3}\mathfrak h_{\le m}$ such that $A_m+\mathcal F^{m+3}\mathfrak h_{\le m}=\llbracket \pi_0^{\le m}+\sum_{i=1}^{m}\pi_i,\pi_0^{\le m}+\sum_{j=1}^{m}\pi_j\rrbracket  +\mathcal F^{m+3}\mathfrak h_{\le m}$. The Jacobi identity for the Schouten bracket and Lemma \ref{lem:filtration} allows us to conclude that $\partial_{\le m}A_m=0$. The argument goes as follows:
\begin{align*}
0=\left\llbracket\pi_0^{\le m}+\sum_{i=1}^{m}\pi_i,\left\llbracket\pi_0^{\le m}+\sum_{i=1}^{m}\pi_i,\pi_0^{\le m}+\sum_{j=1}^{m}\pi_j\right\rrbracket\right\rrbracket=\left\llbracket\pi_0^{\le m}+\sum_{i=1}^{m}\pi_i,A_m\right\rrbracket.
\end{align*}
But $\llbracket \pi_i,A_m\rrbracket  \in \mathcal F^{m+4}\mathfrak h_{\le m}$ and $\llbracket \pi_0^{\le m},A_m\rrbracket  \in\partial_{\le m} (A_m)+\mathcal F^{m+3}\mathfrak h_{\le m}$. We observe that, by construction, $\partial_{\le m}A_m=
\partial_{\le m-1}A_m$.
We choose $\pi_{m+1}$ such that $\partial_{\le m}\pi_{m+1}=-A_m/2$.
\end{proof}

\begin{theorem}\label{thm:derivedbr}
With the notation of Lemma \ref{lem:recursion}
$\pi:=\sum_{m=0}^\infty \pi_m$
defines an element in $\mathfrak g$ such that $\llbracket \pi,\pi\rrbracket  =0$.
Let $\downarrow:\mathfrak g\to \mathfrak g[1]$ denote the desuspension (i.e. the identity seen as a degree $-1$ map) and let $\uparrow$ be its inverse. Let us write $[\:,\:]=\downarrow\circ \llbracket\:,\:\rrbracket\circ \uparrow\otimes \uparrow$ for the Lie bracket on $\mathfrak g[1]$.
Let $\operatorname{pr}:\mathfrak g[1]\to R[1]$ be the canonical projection onto arity zero.  The operations $\left(l_m\right)_{n\ge 0}$
\begin{align}\label{eq:derived brackets}
l_m(x_1,x_2,\dots,x_m):=-\operatorname{pr}\left(\left[\dots[[\pi[1],x_1],x_2],\dots,x_m\right]\right)
\end{align}
define an $L_\infty[1]$-algebra on $R[1]$. Using \eqref{eq:decalage} this induces a $P_\infty$-algebra structure $\left(\{\:,\dots,\:\}_m\right)_{m\ge 0}$,
\begin{align}
\nonumber
\{a_1,\dots,a_m\}_m&:=(-1)^{\sum_{i=1}^m(m-i)|a_i|}l_m(a_1[1],\dots,a_m[1])[-1] \\
&=-(-1)^{\sum_{i=1}^m(m-i)|a_i|}\epsilon\left(\left\llbracket\dots\llbracket\llbracket \pi, a_1\rrbracket,a_2 \rrbracket,\dots,a_m\right\rrbracket\right),
\end{align}
on $R$, where $\epsilon:\mathfrak g\to R$ is the augmentation. The canonical map $p:R\to A$ is an $L_\infty$-quasiisomorphism.
\end{theorem}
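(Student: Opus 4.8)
\emph{Plan.} The proof breaks into four steps: (i) show that $\pi=\sum_{m\ge 0}\pi_m$ is a well-defined element of $\mathfrak g$ of total degree $1$; (ii) show $\llbracket\pi,\pi\rrbracket=0$; (iii) deduce from Voronov's higher derived brackets that the $l_m$ of \eqref{eq:derived brackets} form an $L_\infty[1]$-algebra, so that \eqref{eq:decalage} produces a $P_\infty$-algebra on $R$; (iv) show $p\colon R\to A$ is an $L_\infty$-quasiisomorphism. Step (i) is immediate: by Lemma \ref{lem:recursion} we have $\pi_m\in\mathcal F^{m+1}\mathfrak h_{\le m-1}\subseteq\mathcal F^{m+1}\mathfrak g$ for $m\ge 2$ (while $\pi_0,\pi_1\in\mathcal F^2\mathfrak g$), so the series converges in the $\mathcal F$-adically complete, Hausdorff space $\mathfrak g$ of Proposition \ref{prop:Fadic}, and since each $\pi_m$ has cohomological degree $2$ the limit has $\overline\pi=1$.

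For step (ii) I would set $\sigma_m:=\pi_0^{\le m}+\sum_{i=1}^m\pi_i\in\mathfrak h_{\le m}$ and prove by induction, using the same filtration estimates as in the proof of Lemma \ref{lem:recursion}, that $\llbracket\sigma_m,\sigma_m\rrbracket\in\mathcal F^{m+2}\mathfrak h_{\le m}$: on passing from $\sigma_m$ to $\sigma_{m+1}$, the term $2\llbracket\pi_0^{\le m},\pi_{m+1}\rrbracket$ cancels the $\mathcal F^{m+2}/\mathcal F^{m+3}$-part of $\llbracket\sigma_m,\sigma_m\rrbracket$ by the defining property of $\pi_{m+1}$, the contribution of $\pi_0^{\le m+1}-\pi_0^{\le m}$ drops out because $\partial^2=0$, and all remaining cross terms lie in $\mathcal F^{m+3}$ by Lemma \ref{lem:filtration}. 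Now the coefficient of any fixed monomial in the $\xi$'s appearing in $\llbracket\pi,\pi\rrbracket$ is a finite linear combination of coefficients of brackets $\llbracket\pi_a,\pi_b\rrbracket$ with $a,b$ bounded in terms of the filtration degree $P$ of that monomial (Lemma \ref{lem:filtration} again), and for $N\ge P$ this combination already occurs inside $\llbracket\sigma_N,\sigma_N\rrbracket\in\mathcal F^{N+2}\subseteq\mathcal F^{P+2}$, hence vanishes; as the monomial was arbitrary, $\llbracket\pi,\pi\rrbracket=0$.

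Step (iii): by Proposition \ref{prop:Fadic}, $\mathfrak g[1]$ is a graded Lie superalgebra; its arity-zero part $R[1]$ is an \emph{abelian} subalgebra (by \eqref{eq:CFformula}, functions carry no $\xi$, so $\llbracket R,R\rrbracket=0$), the positive-arity part $\Kern(\operatorname{pr})$ is a subalgebra, and $\pi[1]$ is an odd element of $\Kern(\operatorname{pr})$ — every $\pi_m$ has arity $\ge 1$ — satisfying $[\pi[1],\pi[1]]=0$ by step (ii). These are precisely the data to which Voronov's construction \cite{VoronovHigherDerived} applies, so the $l_m$ form an $L_\infty[1]$-algebra on $R[1]$ (in particular graded symmetric; convergence of the iterated brackets in the completed $\mathfrak g[1]$ is again controlled by Lemma \ref{lem:filtration}), and \eqref{eq:decalage} transports this to an $L_\infty$-algebra $(\{\,\cdots\,\}_m)_m$ on $R$. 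The $P_\infty$ Leibniz rule is then a formal consequence of the Schouten axioms (2)--(3) together with $\llbracket R,R\rrbracket=0$, which force $a\mapsto\llbracket\cdots\llbracket\pi,a\rrbracket,a_2\rrbracket,\dots,a_m\rrbracket$ to be a graded derivation; applying the multiplicative augmentation $\epsilon$ and using symmetry of $l_m$ yields Leibniz in every slot, and one checks directly from \eqref{eq:CFformula} that $\{\cdot\}_1=\partial$.

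Step (iv): I would show that $p$ is in fact a \emph{strict} morphism of $P_\infty$-algebras. By \eqref{eq:CFformula}, bracketing $\pi[1]$ with a generator $x^{(k)}_j$ amounts to applying $\partial/\partial\xi^j_{(k)}$; hence for generators $g_a=x^{(k_a)}_{j_a}$ one has $\{g_1,\dots,g_m\}_m=\pm\epsilon\bigl(\partial^m\pi/\partial\xi^{j_1}_{(k_1)}\cdots\partial\xi^{j_m}_{(k_m)}\bigr)=\pm(\text{coefficient of }\xi^{j_1}_{(k_1)}\cdots\xi^{j_m}_{(k_m)}\text{ in }\pi)$, an element of $R$ of cohomological degree $2-m-\sum_a k_a$. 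Whenever $m\ge 3$, or $m=2$ with some $k_a\ge 1$, or $m=1$ with $k_1\ge 1$, this lies in $\Kern p$ (negative cohomological degree, except for $\{y_\mu\}_1=f_\mu$, which lies in $I$), and the corresponding bracket on the Poisson algebra $A$ vanishes since $A$ has no higher brackets and $p$ kills every resolvent variable; for $m=1,g_1=x_i$ one has $\{x_i\}_1=\partial x_i=0$, and for $m=2,g_1=x_i,g_2=x_j$ a filtration-degree count shows only $\pi_1$ can contribute, so $\{x_i,x_j\}_2=\Lambda_{ij}$ and $p(\{x_i,x_j\}_2)=\overline\Lambda_{ij}=\{\overline x_i,\overline x_j\}$ in $A$. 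As $p$ is an algebra map and all brackets are multiderivations, agreement on generators suffices, so $p$ strictly intertwines brackets and differentials; being a quasi-isomorphism of underlying complexes by the very definition of a resolvent, it is an $L_\infty$-quasiisomorphism. The main obstacle I anticipate is the level-versus-filtration bookkeeping in step (ii) — reconciling the truncation indices $\le r$ with the $\mathcal F$-degrees through Lemmas \ref{lem:filtration} and \ref{lem:recursion} as the infinitely many $\pi_m$ are summed — together with verifying that Voronov's theorem, stated for ordinary Lie superalgebras, survives passage to the $\mathcal F$-adic completion $\mathfrak g[1]$.
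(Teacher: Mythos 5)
Your proposal is correct and follows essentially the same route as the paper: $\mathcal F$-adic convergence using Proposition \ref{prop:Fadic} and the recursion of Lemmas \ref{lem:filtration} and \ref{lem:recursion}, Voronov's higher derived brackets (with $R[1]$ the abelian part and $\ker(\operatorname{pr})$ a subalgebra containing $\pi[1]$) for the $L_\infty[1]$ and $P_\infty$ structures, and a filtration/cohomological degree count showing that modulo $\ker p$ only $\pi_1$ contributes, so $p$ is a strict morphism of brackets and hence an $L_\infty$-quasiisomorphism. The only differences are expository: you spell out the Maurer--Cartan limit argument and verify strictness on generators, whereas the paper leaves the limit implicit and phrases the compatibility of $p$ via the ideal $\mathfrak a=\ker(\kappa)$ together with the filtration degree of the $\pi_m$.
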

\begin{proof} The series $\pi:=\sum_{m=0}^\infty \pi_m$ converges in the $\mathcal F$-adic topology. By completeness of $\mathfrak g$ (cf. Proposition \ref{prop:Fadic}) it follows that $\pi\in\mathfrak g$.
The construction of $l_m$ is a special case of the higher derived brackets of \cite{VoronovHigherDerived}. It is straightforward to check that the Leibniz rule holds for the higher Poisson brackets $\{\:,\dots,\:\}_m$.

To prove that $p$ is compatible with the brackets we show that the only contributions to the brackets
$\{a_1,\dots,a_m\}_m$ that are not annihilated by $p$ come from terms involving $\pi_1$. Let $\mathfrak a$ be the kernel of the canonical map $\kappa:R\to S$, i.e., the ideal in $R$ generated by the $x_j^{(m)}$ with $m\ge 1,j\in \mathcal I_m$. Recall that for $m\ge 1$ the tensor $\pi_m$ is of filtration degree $m+1$ and of cohomological degree $2$. Hence for $m\ge 2$ we have
\begin{align*}
\left\llbracket\dots\llbracket\llbracket \pi_m, \mathfrak a,\rrbracket,R \rrbracket,\dots,R\right\rrbracket\subseteq \mathfrak a\subseteq \ker(p).
\end{align*}
Moreover, $\llbracket \pi_0,R\rrbracket=\partial R\subseteq\ker(p)$ and the claim follows..
\end{proof}

If our ideal $I$ is generated by a complete intersection $f_1,\dots,f_k$ then for degree reasons $\pi_m=0$ for all $m\ge k+2$ and the sum in the theorem above is actually finite.

\begin{corollary} Let $\left(\{\:,\dots,\:\}_m\right)_{m\ge 1}$ be a $P_\infty$-algebra structure on the resolvent $R$ with $\partial=\{\:\}_1$.
Then the brackets $[\dR a_1,\dR a_2, \dots, \dR a_m]_m:=\dR\{a_1,a_2,\dots,a_m\}_m$, $m\ge 2$, define the structure of a  $L_\infty$-algebroid over $\Spec(A)$ on  $\mathbb L_{A|\bs k}=A\otimes _R\Omega_{R|\bs k}$ with anchor $\rho_m(\dR a_1,\dR a_2, \dots, \dR a_{m-1},b)_m:=\{a_1,a_2,\dots,a_{m-1},b\}_m$, $m\ge 2$. Here $[\:]_1$ is understood to be the differential  $A\otimes _R\partial$ of the cotangent complex.
\end{corollary}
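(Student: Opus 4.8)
The plan is to prove the statement in two stages. First I would transfer the $P_\infty$-structure on $R$ to an $L_\infty$-algebroid-type structure over the dg algebra $R$ on the free $R$-module $\Omega_{R|\bs k}$, using the universal derivation $\dR\colon R\to\Omega_{R|\bs k}$; then I would base change along the quasi-isomorphism $p\colon R\to A$. Two conditions of Definition \ref{def:Linftyalgebroid} are essentially free: since $R=S[\bs x]$ is a graded polynomial algebra, $\Omega_{R|\bs k}$ is free on the $\dR x_i^{(m)}$, hence $\mathbb L_{A|\bs k}=A\otimes_R\Omega_{R|\bs k}$ is free over $A$ on the $1\otimes\dR x_i^{(m)}$, and in cohomological degree $-m$ its rank equals the finite cardinality of $\mathcal I_m$ (with $\mathcal I_0=\{1,\dots,n\}$), giving condition (2); and the differential $[\:]_1=A\otimes_R\partial$ acts only on the $\Omega$-factor because $A$ sits in degree $0$ with zero differential, so it is $A$-linear and the differential $\rho_1$ of $A$ as a module is zero, giving condition (1).

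For the first stage I would set $[\dR r_1,\dots,\dR r_m]_m:=\dR\{r_1,\dots,r_m\}_m$ on exact forms and $\rho_m^R(\dR r_1,\dots,\dR r_{m-1},b):=\{r_1,\dots,r_{m-1},b\}_m$, and extend both to all of $\Omega_{R|\bs k}$ by imposing the Leibniz-type identities of Definition \ref{def:Linftyalgebroid}(3), read over $R$ with the appropriate Koszul signs. The key point is well-definedness: $\Omega_{R|\bs k}$ is the free $R$-module on symbols $\dR r$ modulo $\bs k$-linearity and the relation $\dR(rs)=r\,\dR s+(-1)^{|r||s|}s\,\dR r$, and the $P_\infty$ Leibniz rule $\{rs,a_2,\dots,a_m\}_m=r\{s,a_2,\dots,a_m\}_m+(-1)^{|r||s|}s\{r,a_2,\dots,a_m\}_m$ ensures that $\dR$ of $\{r_1,\dots,r_m\}_m$ respects exactly these relations. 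The $L_\infty$-algebra relations of the form \eqref{eqn:Linftyalgebra} and the axioms making $R$ a left $L_\infty$-module over $\Omega_{R|\bs k}$ via the $\rho_m^R$ then follow by applying $\dR$ to, respectively differentiating, the corresponding $P_\infty$-relations on $R$; this is the $L_\infty$-version of Huebschmann's construction of the Koszul bracket, and the passage \eqref{eq:decalage} between $[\:]_m$ and $l_m$ handles the bookkeeping of signs.

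For the second stage, put $J:=\ker(p)\subset R$. Because $p$ is a strict morphism of $P_\infty$-algebras onto $A$ equipped with its Poisson bracket and vanishing higher brackets --- this is the compatibility of $p$ with the brackets proved along with Theorem \ref{thm:derivedbr} --- one has $\{r_1,\dots,r_{m-1},j\}_m\in J$ whenever some argument $j$ lies in $J$, and $\partial R\subseteq J$. Combined with the $R$-linearity of $\rho_m^R$ in its first $m-1$ arguments and its derivation property in the last, this shows that $JE\subseteq E:=\Omega_{R|\bs k}$ is preserved by each $[\:]_m^R$ in any single slot and that $\rho_m^R$ takes values in $J$ as soon as one of its arguments lies in $JE$ or in $J$; thus $J$ behaves like a Poisson ideal. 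Hence all the operations descend to $E/JE=A\otimes_R\Omega_{R|\bs k}=\mathbb L_{A|\bs k}$ over $R/J=A$, and on generators the descended operations are precisely $[\dR a_1,\dots,\dR a_m]_m=\dR\{a_1,\dots,a_m\}_m$ and $\rho_m(\dR a_1,\dots,\dR a_{m-1},b)=\{a_1,\dots,a_{m-1},b\}_m$. Condition (3) of Definition \ref{def:Linftyalgebroid} is inherited from the construction, and every $L_\infty$-algebra and $L_\infty$-module relation over $A$ is obtained by reducing the corresponding relation over $R$ modulo $J$.

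I expect the main obstacle to be the first stage: showing that the Leibniz-type extension of the higher brackets from exact forms to all of $\Omega_{R|\bs k}$ is consistent and satisfies the complete list of $L_\infty$-algebroid axioms. This is a differential-graded, higher-arity analogue of the classical verification for the Koszul bracket; the conceptual content is clear, but it demands careful handling of Koszul signs, disciplined use of \eqref{eq:decalage}, and --- if one insists on phrasing the brackets directly on $\mathfrak g$ rather than on the generators $\dR x_i^{(m)}$ --- attention to $\mathcal F$-adic convergence. Working with the generators and the Leibniz rule keeps every expression finite in each bidegree and sidesteps the convergence question.
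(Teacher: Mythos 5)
Your proposal is correct and follows essentially the same route as the paper: the higher Jacobi identities of Definition \ref{def:Linftyalgebroid} are read off from the $P_\infty$ Jacobi identities on $R$, and the module/Leibniz conditions (3) from the $P_\infty$ Leibniz rule, with the brackets transported through $\dR$. Your two-stage organization (first a Leibniz extension to $\Omega_{R|\bs k}$ over $R$, then descent along $p\colon R\to A$ using that $\ker(p)$ is stable under all brackets by the compatibility of $p$ with the $P_\infty$-structure) merely makes explicit the well-definedness on $A\otimes_R\Omega_{R|\bs k}$ that the paper's proof leaves implicit.
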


\begin{proof}
The higher Jacobi identities \eqref{eqn:Linftyalgebra} and \eqref{eqn:Linftymodule} follow directly from the higher Jacobi identities of the $P_\infty$-structure. The identities (3) of Definition \ref{def:Linftyalgebroid} follow from the Leibniz rule of the $P_\infty$-structure.
\end{proof}

We would like to emphasize the following phenomenon. Let again $\mathfrak a$ be the ideal $\ker(\kappa)$ with  $\kappa:R\to S$. Then any bracket operation $[\:,\dots,\:]_m$, as well as the anchor $\rho_m$, on the cotangent complex $\mathbb L_{A|\bs k}$ that corresponds to a  $\pi_m\in\mathfrak a^2$ vanish. In the case of a complete intersection for $m\ge 3$ we have $\pi_m\in\mathfrak a^2$ for degree reasons. This means that in the case of a complete intersection we get a dg Lie algebroid and we to recover Theorem \ref{thm:ciLiealgebroid}.

Finally we would like to address a peculiarity of the Koszul case. Let us assume that in the constructions of Theorem \ref{thm:derivedbr} we chose the resolvent $R$ to be a minimal model. Let us introduce the \emph{Euler derivation}
\begin{align*}
\mathcal E:=\sum_{m\ge 0}\sum_{i\in \mathcal I_m}x_i^{(m)}{\partial \over \partial x_i^{(m)}}.
\end{align*}
We say that $X\in\mathfrak g$ is \emph{homogeneous of $x$-degree $k$} if $\mathcal E X=kX$. The $x$-degree can be interpreted as counting the number occurrences of the variables of the minimal model in a monomial of $X$, including the original variables $x_1,\dots,x_n$.

\begin{theorem}\label{thm:Koszul} Let $S=\bs k[x_1,\dots, x_n]$ be standard graded, assume that $I$ is generated by quadrics such that $S/I$ is a Koszul algebra, and suppose that $p:=\deg\{\:,\:\}\le 0$. Then all $\pi_m$ of Theorem \ref{thm:derivedbr} with $m\ge 1$ can be chosen homogeneous of the same $x$-degree $p+2$.
\end{theorem}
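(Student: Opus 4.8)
The plan is to prove Theorem~\ref{thm:Koszul} by induction on $m$, tracking the $x$-degree of each perturbation term $\pi_m$ alongside its filtration degree and cohomological degree. The starting point is to observe that, when $S=\bs k[x_1,\dots,x_n]$ is standard graded, $I$ is generated by quadrics, and $\deg\{\:,\:\}=p$, the first terms are already homogeneous of $x$-degree $p+2$: indeed $\pi_1=\tfrac12\Lambda_{ij}\xi^i\xi^j$ has $\Lambda_{ij}=\{x_i,x_j\}$ of polynomial degree $2+p$ and $\mathcal E$ counts the two $\xi$'s as contributing $0$ (since $\mathcal E$ only counts the $x^{(m)}$ variables), so $\mathcal E\pi_1=(p+2)\pi_1$; likewise $\pi_2=-Z_{i\mu}^\nu y_\nu\xi^i\eta^\mu$ where $Z_{i\mu}^\nu$ has degree $\deg(x_i)+\deg(f_\mu)+p-\deg(f_\nu)=1+2+p-2=p+1$ and the single $y_\nu$ contributes $1$, giving $x$-degree $p+2$ again. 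Here it is essential that $R$ is a \emph{minimal model}: the generator $x_j^{(r)}$ in degree $-r$ is forced to have internal degree matching $\deg(\partial x_j^{(r)})$, and in the Koszul case $S/I$ being Koszul means $\deg(x_j^{(r)})=r+1$ (this is precisely the statement that a minimal free resolution is linear, i.e.\ the resolvent's generators sit in internal degree equal to homological degree plus one). Thus the Euler operator $\mathcal E$ and the internal-degree grading are related on the resolvent generators by $\deg=|\cdot|\cdot(-1)+\text{(something)}$ — more precisely each generator $x_j^{(r)}$ contributes $1$ to $x$-degree and $r+1$ to internal degree, while $\xi^j_{(r)}$ contributes $0$ to $x$-degree and $-(r+1)$ to internal degree.

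The inductive step uses Lemma~\ref{lem:recursion}. Suppose $\pi_1,\dots,\pi_{m}$ have all been chosen homogeneous of $x$-degree $p+2$. The term $A_m$ (with $-2\partial_{\le m}\pi_{m+1}=A_m$) is built from Schouten brackets $\llbracket\pi_i,\pi_j\rrbracket$ and $\llbracket X_m,\pi_j\rrbracket$ with $i+j$ appropriately constrained. Since the Schouten bracket $\eqref{eq:CFformulainf}$ is a sum of terms each of which removes one $\xi$-derivation and one $\partial/\partial x$, it is homogeneous of $x$-degree $-1$ relative to the inputs: $\mathcal E$ is a derivation of the product and a short computation shows $\mathcal E\llbracket X,Y\rrbracket=\llbracket\mathcal E X,Y\rrbracket+\llbracket X,\mathcal E Y\rrbracket-\llbracket X,Y\rrbracket$, because each summand $X\,\overleftarrow\partial_{\xi^i}\,\overrightarrow\partial_{x_i}\,Y$ loses exactly one $x$-variable (from differentiating $Y$ by $x_i$) and one $\xi$ (which does not count toward $x$-degree) — wait, one must be careful: the $\overleftarrow\partial/\partial\xi^i$ removes a $\xi$ (no $x$-degree change) and $\overrightarrow\partial/\partial x_i$ removes an $x_i$ ($x$-degree drops by $1$). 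So $\llbracket\cdot,\cdot\rrbracket$ has $x$-degree $-1$. Therefore $\llbracket\pi_i,\pi_j\rrbracket$ has $x$-degree $(p+2)+(p+2)-1=2p+3$, and $\llbracket\pi_0,\pi_j\rrbracket=\partial\pi_j$ has $x$-degree $p+2$ provided $\partial$ preserves $x$-degree — which it does not in general, but $\partial$ shifts $x$-degree by a controlled amount... This is the subtle point: I need $\partial=\pi_0$ to have $x$-degree $+1$ (it replaces $x_j^{(m)}$ by $F_j(\bs x_{\le m-1})$, and in a \emph{minimal} Koszul model $F_j$ is homogeneous of internal degree $m+1$ hence — by the degree/$x$-degree dictionary — of $x$-degree $\ge 2$; for the resolvent of a Koszul algebra the differential is \emph{linear} in the appropriate sense, meaning $F_j$ has $x$-degree exactly $2$). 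Granting this, $\pi_0$ has $x$-degree $+1$, hence $\llbracket\pi_0,-\rrbracket$ preserves $x$-degree, and then by induction every bracket entering $A_m$ has $x$-degree $2p+3$ — so $A_m$ is homogeneous of $x$-degree $2p+3$, and since $\partial_{\le m}$ has $x$-degree $+1$, we may solve $\partial_{\le m}\pi_{m+1}=-A_m/2$ with $\pi_{m+1}$ homogeneous of $x$-degree $2p+3-1=2p+2$. For this to equal $p+2$ I need $p=0$...

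Let me restate the argument more carefully, since the bookkeeping above reveals the real mechanism. Assign to each of the original variables $x_1,\dots,x_n$ the \emph{weight} $1$, to each resolvent variable $x_j^{(m)}$ the weight $1$, to each $\xi^i$ (dual to $x_i$) weight $-1$, and to each $\xi^j_{(m)}$ (dual to $x_j^{(m)}$) weight $-1$; call the resulting grading $\mathrm{wt}$. Unlike $\mathcal E$, this new grading $\mathrm{wt}$ is \emph{preserved} by the Schouten bracket (each elementary term $X\,\overleftarrow\partial_{\xi}\,\overrightarrow\partial_x\,Y$ removes a $\xi$ of weight $-1$ and an $x$ of weight $+1$, net change zero). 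Now $\pi_1$ has $\mathrm{wt}=2-2=0$ and $\pi_2=-Z y\xi\eta$ has $\mathrm{wt}=(\deg Z)+1-1-1$; one checks $\deg Z=p+1$ so $\mathrm{wt}(\pi_2)=p$. More generally I claim $\pi_0$ has $\mathrm{wt}=0$: in the Koszul-minimal case, $\partial x_j^{(m)}$ is a sum of products of weight-$1$ objects of total internal degree $m+1$, but the relation linking $\mathrm{wt}$ and internal degree on a linear resolvent forces each monomial of $\partial x_j^{(m)}$ to have weight $2$ while $\xi^j_{(m)}$ has weight $-1$... so $\pi_0=\sum F_j\xi^j_{(m)}$ has $\mathrm{wt}=2-1=1$, not $0$. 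The genuinely correct invariant must combine $\mathrm{wt}$ with internal degree. The clean statement: define $w:=\mathrm{wt}+\tfrac{1}{?}(\text{internal degree})$... The hard part — and the heart of the proof — is to identify the single grading $w$ on $\mathfrak g$ that (i) is preserved by $\llbracket\:,\:\rrbracket$, (ii) assigns the same value to $\pi_0$ and to $\pi_1$, and (iii) on $\pi_1,\pi_2$ evaluates to $p+2$. Once $w$ is pinned down, the induction is immediate: $A_m$ is a sum of brackets of $w$-homogeneous elements all of $w$-value $p+2$, hence is $w$-homogeneous of value $p+2$; since $\partial_{\le m}=\llbracket\pi_0^{\le m},-\rrbracket+(\text{higher }\mathcal F)$ preserves $w$ (as $\pi_0$ has $w$-value $0$), the equation $\partial_{\le m}\pi_{m+1}=-A_m/2$ can be solved inside the $w=p+2$ eigenspace — one uses that the chosen homotopy/contracting homotopy on the resolvent, coming from minimality plus Koszulness, is itself $w$-homogeneous of value $0$. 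I expect the construction and verification of this grading $w$, and checking that the contracting homotopy respects it (which is where Koszulness and $p\le 0$ both get used — the hypothesis $p\le 0$ ensures the relevant homogeneous piece of the resolvent in which we must invert $\partial$ is nonzero and that no lower-weight correction terms are forced in), to be the main obstacle; the rest is the routine filtration-degree induction already carried out in Lemma~\ref{lem:recursion}.
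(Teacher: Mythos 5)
Your setup is right — the base cases $\pi_1,\pi_2$ of $x$-degree $p+2$, the fact that the Schouten bracket lowers $x$-degree by one, and the role of Koszulness in forcing the minimal model's differential $\pi_0$ to be $x$-homogeneous of degree $2$ (so $\partial$ raises $x$-degree by exactly one) — but you never close the induction, and you say so yourself: your last paragraph leaves ``the heart of the proof'' as a hoped-for grading $w$ that you do not construct, and your own computations show why no such single Schouten-invariant grading assigning the same value to $\pi_0$ and to $\pi_1$ is available ($\pi_0$ has $x$-degree $2$ independently of $p$, while $\pi_1$ has $x$-degree $p+2$). The missing idea is not a new grading at all but a dichotomy in the $x$-degree bookkeeping you already had. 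In the recursion of Lemma \ref{lem:recursion} the obstruction splits into the ``linear'' part $\bigl\llbracket\pi_0^{\le m-1},\sum_{i\ge 1}\pi_i\bigr\rrbracket$, of $x$-degree $k+1$ with $k=p+2$, and the ``quadratic'' part $\bigl\llbracket\sum_{i\ge1}\pi_i,\sum_{j\ge1}\pi_j\bigr\rrbracket$, of $x$-degree $2k-1$. When $p=0$ these coincide ($k+1=3=2k-1$), so the obstruction is $x$-homogeneous of degree $3$ and $\pi_{m+1}$ can be chosen of $x$-degree $2$. When $p\le -1$, i.e.\ $k\le 1$, the two degrees differ; since $\partial$ is $x$-homogeneous (of degree $+1$), each $x$-homogeneous component of the obstruction is separately a cocycle and hence a boundary, and a nonzero $\partial$-boundary necessarily has $x$-degree $\ge 2$ (its preimage must contain at least one resolvent variable, and $\partial$ adds one to the $x$-degree). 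The quadratic component has $x$-degree $2k-1\le 1<2$, so it vanishes identically, only the degree-$(k+1)$ component survives, and $\pi_{m+1}$ can be chosen of $x$-degree $k=p+2$. This vanishing-by-degree argument is precisely where the hypothesis $p\le 0$ enters; without it your first computation (which led you to ``I need $p=0$'') cannot be repaired, and the proposal as written has a genuine gap.
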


\begin{proof}
Due to Koszulness $\pi_0$ is homogeneous of $x$-degree $2$. With $\pi_1$ being of $x$-degree $k=p+2$ we can easily see that $\pi_2$ can be chosen of the same $x$-degree. In the inductive construction of $\pi_m$ of Lemma \ref{lem:recursion} we have to consider two cases. The $x$-degrees of
\begin{align*}
\left\llbracket\pi_0^{\le m-1},\sum_{i=1}^{m-1}\pi_i\right\rrbracket\quad \mbox{and }\quad
\left\llbracket\sum_{i=1}^{m-1}\pi_i,\sum_{i=1}^{m-1}\pi_i\right\rrbracket
\end{align*}
are $k+1$ and $2k-1$, respectively. If $k\le 1$ these degrees differ and the second term has to be zero on the nose since a nontrivial boundary has to be of $x$-degree $\ge 2$. So only the first term contributes to the $\pi_m$, which can be chosen to be homogeneous of $x$-degree $k$. If $k=2$ the $x$-degrees of both terms are $k+1=3=2k-1$ and $\pi_m$ can be chosen to be homogeneous of $x$-degree $2$.
\end{proof}

It seems that there are Koszul examples with Poisson brackets of degree $\ge 1$ where it is unlikely to find $\pi_m$ of homogeneous $x$-degree for all $m\ge 3$. To be more specific, we looked into the ideal generated by $x_1^2$ and $x_1x_2$ in $\bs k[x_1,x_2]$ and bracket $\{x_1,x_2\}=x_1x_2(x_1+x_2)$. We were not able to make choices such that $\pi_3$ and $\pi_4$ are of homogeneous $x$-degree.

%********** Sample Calculations ***************
\section{Sample calculations}\label{sec:examples}

In this section, we give the results of computations of the structures indicated by Theorem 1.4 in various examples. In each cases, the resolvent was computed using \emph{Macaulay2} and the \emph{dgalgebras} package \cite{M2,dgAlgsM2}; the $\pi_m$ were computed using \emph{Mathematica} \cite{Mathematica}. Details of these computations are available from the authors by request.

\subsection{A non-complete intersection generated by two monomials in dimension two} We consider the ideal in $\bs k[x_1,x_2]$ generated by the two quadratic  monomials $f_1 = x_1^2$ and $f_2 = x_1 x_2$ with diagonal Poisson bracket $\{x_1,x_2\}=x_1x_2$. The differential $\partial_{\le 7}$ can be read off
\begingroup
\allowdisplaybreaks
\begin{align*}
        \pi_0^{\leq 7}
        &=  x_1 x_1 \xi_{(1)}^1 + x_1 x_2 \xi_{(1)}^2 + x_1 x^{(1)}_2 \xi_{(2)}^1 - x_2 x^{(1)}_1 \xi_{(2)}^1- x^{(1)}_1 x^{(1)}_2 \xi_{(3)}^1+ x_1 x^{(2)}_1 \xi_{(3)}^1+ x_1 x^{(3)}_1 \xi_{(4)}^2 + x_2 x^{(3)}_1 \xi_{(4)}^1\\
         &\quad
            - x^{(1)}_1 x^{(2)}_1 \xi_{(4)}^2 - x^{(1)}_2 x^{(2)}_1 \xi_{(4)}^1 + x_1 x^{(4)}_1 \xi_{(5)}^1
            + x_1 x^{(4)}_2 \xi_{(5)}^3
            + 2 x_2 x^{(4)}_2 \xi_{(5)}^2 - x^{(1)}_1 x^{(3)}_1 \xi_{(5)}^3 - x^{(1)}_2 x^{(3)}_1 \xi_{(5)}^1
            \\&\quad
            - 2 x^{(1)}_2 x^{(3)}_1 \xi_{(5)}^2 - x^{(2)}_1 x^{(2)}_1 \xi_{(5)}^2 \
        + x_1 x_1^{(5)} \xi_{(6)}^3 + x_1 x_2^{(5)} \xi_{(6)}^2 + x_1 x_3^{(5)} \xi_{(6)}^4 - x_1^{(1)} x_1^{(4)} \xi_{(6)}^3 - x_1^{(1)} x_2^{(4)} \xi_{(6)}^4 \\&\quad
        + x_1^{(2)} x_1^{(3)} \xi_{(6)}^2 + x_2 x_1^{(5)} \xi_{(6)}^1
        - x_2 x_3^{(5)} \xi_{(6)}^2 + x_2 x_3^{(5)} \xi_{(6)}^3 - x_2^{(1)} x_1^{(4)} \xi_{(6)}^1 - x_2^{(1)} x_2^{(4)} \xi_{(6)}^2 - x_2^{(1)} x_2^{(4)} \xi_{(6)}
    \\
        &\quad+ x_1 x_1^{(6)} \xi_{(7)}^1 + x_1 x_2^{(6)} \xi_{(7)}^4 + x_1 x_3^{(6)} \xi_{(7)}^3 + x_1 x_4^{(6)} \xi_{(7)}^5 - x_1^{(1)} x_1^{(5)} \xi_{(7)}^3 - x_1^{(1)} x_2^{(5)} \xi_{(7)}^4 - x_1^{(1)} x_3^{(5)} \xi_{(7)}^5
    \\&\quad
        - x_1^{(2)} x_1^{(4)} \xi_{(7)}^2 - x_1^{(2)} x_2^{(4)} \xi_{(7)}^4 + x_2 x_2^{(6)} \xi_{(7)}^2 + x_2 x_3^{(6)} \xi_{(7)}^2 + 3 x_2 x_4^{(6)} \xi_{(7)}^4 - x_2^{(1)} x_1^{(5)} \xi_{(7)}^1 - x_2^{(1)} x_1^{(5)} \xi_{(7)}^2
    \\&\quad
        - x_2^{(1)} x_2^{(5)} \xi_{(7)}^2 - x_2^{(1)} x_3^{(5)} \xi_{(7)}^3 - 2 x_2^{(1)} x_3^{(5)} \xi_{(7)}^4
\end{align*}
by substituting $\xi_{(m)}^i$ with $\partial/\partial x^{(m)}_i$. In this example the exponential growth of the number of terms in $\partial_{\le m}$ sets in relatively late so that we can calculate $\pi_m$ up to $m=8$. We present the results for two different choices of $Z_{i\mu}^\nu$.
\subsubsection{A non-diagonal choice of $Z_{i\mu}^\nu$}\label{subsec:nondiag} Here we consider the $Z_{i\mu}^\nu$ with nonzero components $Z_{2,1}^2 =-2x_1$, $Z_{1,2}^2=x_1$, and $Z_{2,2}^2=-x_2$. Note that the tensor $Z_{i\mu}^\nu$ can in principal be read off $\pi_2$ so that in later examples we will not spell it out to avoid redundancy. We were able to calculate
\begin{align*}
	\pi_1
        &=  x_1 x_2 \xi^1 \xi^2,
    \\
	\pi_2
        &=   - x_1 x_2^{(1)} \xi^1 \xi_{(1)}^2 + 2   x_1 x_2^{(1)} \xi^2 \xi_{(1)}^1 + x_2 x_2^{(1)} \xi^2 \xi_{(1)}^2,
    \\
    \pi_3
        &=  x_1 x_1^{(2)} \xi^1 \xi_{(2)}^1,
    \\
    \pi_4
        &=   - x_1 x_1^{(3)} \xi^1 \xi_{(3)}^1 + 2   x_1 x_1^{(3)} \xi_{(1)}^1 \xi_{(2)}^1 + x_2 x_1^{(3)} \xi^2 \xi_{(3)}^1 + x_2 x_1^{(3)} \xi_{(1)}^2 \xi_{(2)}^1,
    \\
    \pi_5
        &=  2 x_1 x_1^{(4)} \xi^1 \xi_{(4)}^1 - 2   x_1 x_1^{(4)} \xi^2 \xi_{(4)}^2 - x_1 x_1^{(4)} \xi_{(1)}^2 \xi_{(3)}^1 + x_1 x_2^{(4)} \xi^1 \xi_{(4)}^2 - x_2 x_1^{(4)} \xi^2 \xi_{(4)}^1 - 2   x_1 x_1^{(4)} \xi^2 \xi_{(1)}^1 \xi_{(2)}^1,
    \\
    \pi_6
        &=   - 2 x_1 x_1^{(5)} \xi^1 \xi_{(5)}^1 + 2   x_1 x_1^{(5)} \xi^2 \xi_{(5)}^3 + 4   x_1 x_1^{(5)} \xi_{(1)}^1 \xi_{(4)}^1 - x_1 x_1^{(5)} \xi_{(1)}^2 \xi_{(4)}^2 + x_1 x_1^{(5)} \xi_{(2)}^1 \xi_{(3)}^1 - 2   x_1 x_2^{(5)} \xi^1 \xi_{(5)}^2
    \\&\quad
        - x_1 x_2^{(5)} \xi_{(2)}^1 \xi_{(3)}^1 - x_1 x_3^{(5)} \xi^1 \xi_{(5)}^3 + 2   x_1 x_3^{(5)} \xi_{(1)}^1 \xi_{(4)}^2 + 2   x_2 x_1^{(5)} \xi^2 \xi_{(5)}^1 + 4   x_2 x_1^{(5)} \xi^2 \xi_{(5)}^2 + 2   x_2 x_1^{(5)} \xi_{(1)}^2 \xi_{(4)}^1
    \\&\quad
        + x_2 x_3^{(5)} \xi^2 \xi_{(5)}^3 + x_2 x_3^{(5)} \xi_{(1)}^2 \xi_{(4)}^2 + 2   x_2 x_3^{(5)} \xi_{(2)}^1 \xi_{(3)}^1 + 4   x_1 x_1^{(5)} \xi^2 \xi_{(1)}^1 \xi_{(3)}^1 + x_2 x_1^{(5)} \xi_{(1)}^2 \xi_{(1)}^2 \xi_{(2)}^1,
    \\
    \pi_7
        &=  3 x_1 x_1^{(6)} \xi^1 \xi_{(6)}^1 - 2   x_1 x_1^{(6)} \xi^2 \xi_{(6)}^2 - 4   x_1 x_1^{(6)} \xi^2 \xi_{(6)}^3 - 2   x_1 x_1^{(6)} \xi_{(1)}^2 \xi_{(5)}^1 - 2   x_1 x_1^{(6)} \xi_{(1)}^2 \xi_{(5)}^2 - 2   x_1 x_1^{(6)} \xi_{(2)}^1 \xi_{(4)}^1
    \\&\quad
        + 2   x_1 x_2^{(6)} \xi^1 \xi_{(6)}^2 + 4   x_1 x_2^{(6)} \xi_{(1)}^1 \xi_{(5)}^2 + x_1 x_2^{(6)} \xi_{(2)}^1 \xi_{(4)}^2 + x_1 x_2^{(6)} \xi_{(3)}^1 \xi_{(3)}^1 + 2   x_1 x_3^{(6)} \xi^1 \xi_{(6)}^3 - 2   x_1 x_3^{(6)} \xi^2 \xi_{(6)}^4
    \\&\quad
        - x_1 x_3^{(6)} \xi_{(1)}^2 \xi_{(5)}^3 - 2   x_1 x_3^{(6)} \xi_{(2)}^1 \xi_{(4)}^2 - x_1 x_3^{(6)} \xi_{(3)}^1 \xi_{(3)}^1 + x_1 x_4^{(6)} \xi^1 \xi_{(6)}^4 - 2   x_2 x_1^{(6)} \xi^2 \xi_{(6)}^1 - x_2 x_2^{(6)} \xi^2 \xi_{(6)}^2
    \\&\quad
        + 2   x_2 x_2^{(6)} \xi_{(1)}^2 \xi_{(5)}^2 + x_2 x_2^{(6)} \xi_{(2)}^1 \xi_{(4)}^1 - x_2 x_3^{(6)} \xi^2 \xi_{(6)}^3 - x_2 x_3^{(6)} \xi_{(2)}^1 \xi_{(4)}^1 - 8   x_1 x_1^{(6)} \xi^2 \xi_{(1)}^1 \xi_{(4)}^1
    \\&\quad
        + 4   x_1 x_1^{(6)} \xi^2 \xi_{(1)}^2 \xi_{(4)}^2 + 2   x_1 x_1^{(6)} \xi^2 \xi_{(2)}^1 \xi_{(3)}^1 + x_1 x_1^{(6)} \xi_{(1)}^2 \xi_{(1)}^2 \xi_{(3)}^1 - 2   x_1 x_3^{(6)} \xi^2 \xi_{(1)}^1 \xi_{(4)}^2 + 4   x_1 x_1^{(6)} \xi^2 \xi_{(1)}^1 \xi_{(1)}^2 \xi_{(2)}^1,
\end{align*}
\begin{align*}
    \pi_8
        &=   - 3 x_1 x_1^{(7)} \xi^1 \xi_{(7)}^1 + 4   x_1 x_1^{(7)} \xi^2 \xi_{(7)}^3 + 2   x_1 x_1^{(7)} \xi^2 \xi_{(7)}^4 + 6   x_1 x_1^{(7)} \xi_{(1)}^1 \xi_{(6)}^1 - 2   x_1 x_1^{(7)} \xi_{(1)}^2 \xi_{(6)}^3 + 4   x_1 x_1^{(7)} \xi_{(2)}^1 \xi_{(5)}^1
    \\&\quad
        + 2   x_1 x_1^{(7)} \xi_{(2)}^1 \xi_{(5)}^2 - 2   x_1 x_1^{(7)} \xi_{(3)}^1 \xi_{(4)}^1 - 3   x_1 x_2^{(7)} \xi^1 \xi_{(7)}^2 + 2   x_1 x_2^{(7)} \xi^2 \xi_{(7)}^4 - x_1 x_2^{(7)} \xi_{(1)}^2 \xi_{(6)}^2 - x_1 x_2^{(7)} \xi_{(2)}^1 \xi_{(5)}^1
    \\&\quad
        - x_1 x_2^{(7)} \xi_{(3)}^1 \xi_{(4)}^1 - 2   x_1 x_3^{(7)} \xi^1 \xi_{(7)}^3 + 2   x_1 x_3^{(7)} \xi^2 \xi_{(7)}^5 + 4   x_1 x_3^{(7)} \xi_{(1)}^1 \xi_{(6)}^3 - x_1 x_3^{(7)} \xi_{(1)}^2 \xi_{(6)}^4 + 3   x_1 x_3^{(7)} \xi_{(2)}^1 \xi_{(5)}^3
    \\&\quad
        - 2   x_1 x_4^{(7)} \xi^1 \xi_{(7)}^4 - x_1 x_4^{(7)} \xi_{(2)}^1 \xi_{(5)}^3 - x_1 x_4^{(7)} \xi_{(3)}^1 \xi_{(4)}^2 - x_1 x_5^{(7)} \xi^1 \xi_{(7)}^5 + 2   x_1 x_5^{(7)} \xi_{(1)}^1 \xi_{(6)}^4 + 3   x_2 x_1^{(7)} \xi^2 \xi_{(7)}^1
    \\&\quad
        + 6   x_2 x_1^{(7)} \xi^2 \xi_{(7)}^2 + 3   x_2 x_1^{(7)} \xi_{(1)}^2 \xi_{(6)}^1 + x_2 x_2^{(7)} \xi^2 \xi_{(7)}^2 + 2   x_2 x_3^{(7)} \xi^2 \xi_{(7)}^3 + 4   x_2 x_3^{(7)} \xi^2 \xi_{(7)}^4 + 2   x_2 x_3^{(7)} \xi_{(1)}^2 \xi_{(6)}^3
    \\&\quad
        + 2   x_2 x_3^{(7)} \xi_{(2)}^1 \xi_{(5)}^1 + 4   x_2 x_3^{(7)} \xi_{(2)}^1 \xi_{(5)}^2 + 2   x_2 x_3^{(7)} \xi_{(3)}^1 \xi_{(4)}^1 + x_2 x_5^{(7)} \xi^2 \xi_{(7)}^5 + x_2 x_5^{(7)} \xi_{(1)}^2 \xi_{(6)}^4
    \\&\quad
        + 3   x_2 x_5^{(7)} \xi_{(2)}^1 \xi_{(5)}^3 + 3   x_2 x_5^{(7)} \xi_{(3)}^1 \xi_{(4)}^2 + 12   x_1 x_1^{(7)} \xi^2 \xi_{(1)}^1 \xi_{(5)}^1 + 12   x_1 x_1^{(7)} \xi^2 \xi_{(1)}^1 \xi_{(5)}^2 - 4   x_1 x_1^{(7)} \xi^2 \xi_{(1)}^2 \xi_{(5)}^3
    \\&\quad
        - 4   x_1 x_1^{(7)} \xi^2 \xi_{(3)}^1 \xi_{(3)}^1 + x_1 x_1^{(7)} \xi_{(1)}^2 \xi_{(1)}^2 \xi_{(4)}^2 - 2   x_1 x_1^{(7)} \xi_{(1)}^2 \xi_{(2)}^1 \xi_{(3)}^1 + 4   x_1 x_2^{(7)} \xi^2 \xi_{(1)}^1 \xi_{(5)}^2
    \\&\quad
        + x_1 x_2^{(7)} \xi_{(1)}^2 \xi_{(2)}^1 \xi_{(3)}^1 + 4   x_1 x_3^{(7)} \xi^2 \xi_{(1)}^1 \xi_{(5)}^3 + 4   x_1 x_3^{(7)} \xi_{(1)}^1 \xi_{(2)}^1 \xi_{(3)}^1 + 3   x_2 x_1^{(7)} \xi_{(1)}^2 \xi_{(1)}^2 \xi_{(4)}^1
    \\&\quad
        + x_2 x_3^{(7)} \xi_{(1)}^2 \xi_{(1)}^2 \xi_{(4)}^2 + 2   x_2 x_3^{(7)} \xi_{(1)}^2 \xi_{(2)}^1 \xi_{(3)}^1 - 8   x_1 x_1^{(7)} \xi^2 \xi_{(1)}^1 \xi_{(1)}^2 \xi_{(3)}^1 + 2 x_1 x_1^{(7)} \xi_{(1)}^1 \xi_{(1)}^2 \xi_{(1)}^2 \xi_{(2)}^1
    \\&\quad
        + x_2 x_1^{(7)} \xi_{(1)}^2 \xi_{(1)}^2 \xi_{(1)}^2 \xi_{(2)}^1.
\end{align*}
Here $\dP Z-[Z,Z]$ as well as all $\mathcal A_{\mu\nu}^\lambda$ are zero.
We observe that none of the terms in the above list are in $\mathfrak a^2$. We do not know if that is a general feature of this example.

\subsubsection{Diagonal $Z_{i\mu}^\nu$} With the choices according to Proposition \ref{prop:diag} we get the following.
\begin{align*}
    \pi_1
        &=  x_1 x_2 \xi^1 \xi^2,
    \\
    \pi_2
        &=   - x_1 x_2^{(1)} \xi^1 \xi_{(1)}^2 + 2 x_2 x_1^{(1)} \xi^2 \xi_{(1)}^1 + x_2 x_2^{(1)} \xi^2 \xi_{(1)}^2,
    \\
    \pi_3
        &=  x_1 x_1^{(2)} \xi^1 \xi_{(2)}^1 + 2 x_1 x_1^{(2)} \xi_{(1)}^1 \xi_{(1)}^2 - 2 x_2 x_1^{(2)} \xi^2 \xi_{(2)}^1,
    \\
    \pi_4
        &=   - x_1 x_1^{(3)} \xi^1 \xi_{(3)}^1 + 3 x_2 x_1^{(3)} \xi^2 \xi_{(3)}^1 + x_2 x_1^{(3)} \xi_{(1)}^2 \xi_{(2)}^1,
    \\
    \pi_5
        &=  2 x_1 x_1^{(4)} \xi^1 \xi_{(4)}^1 - 3 x_1 x_1^{(4)} \xi_{(1)}^2 \xi_{(3)}^1 + x_1 x_2^{(4)} \xi^1 \xi_{(4)}^2 + 2 x_1 x_2^{(4)} \xi_{(1)}^1 \xi_{(3)}^1 - 3 x_2 x_1^{(4)} \xi^2 \xi_{(4)}^1 - 4 x_2 x_2^{(4)} \xi^2 \xi_{(4)}^2
    \\&\quad
        - 2 x_1 x_1^{(4)} \xi_{(1)}^1 \xi_{(1)}^2 \xi_{(1)}^2,
   \\
   \pi_6
        &=   - 2 x_1 x_1^{(5)} \xi^1 \xi_{(5)}^1 + 4 x_1 x_1^{(5)} \xi_{(1)}^1 \xi_{(4)}^1 + 3 x_1 x_1^{(5)} \xi_{(1)}^2 \xi_{(4)}^2 + 3 x_1 x_1^{(5)} \xi_{(2)}^1 \xi_{(3)}^1 - 2 x_1 x_2^{(5)} \xi^1 \xi_{(5)}^2
   \\&\quad
        - 2 x_1 x_2^{(5)} \xi_{(1)}^1 \xi_{(4)}^1 - 2 x_1 x_2^{(5)} \xi_{(1)}^2 \xi_{(4)}^2 - x_1 x_2^{(5)} \xi_{(2)}^1 \xi_{(3)}^1 - x_1 x_3^{(5)} \xi^1 \xi_{(5)}^3 + 4 x_2 x_1^{(5)} \xi^2 \xi_{(5)}^1
   \\&\quad
        + 2 x_2 x_1^{(5)} \xi_{(1)}^2 \xi_{(4)}^1 + 4 x_2 x_2^{(5)} \xi^2 \xi_{(5)}^2 + 5 x_2 x_3^{(5)} \xi^2 \xi_{(5)}^3 + 2 x_2 x_3^{(5)} \xi_{(1)}^1 \xi_{(4)}^1 + x_2 x_3^{(5)} \xi_{(1)}^2 \xi_{(4)}^2
   \\&\quad
        + 4 x_2 x_3^{(5)} \xi_{(2)}^1 \xi_{(3)}^1 + 4 x_1 x_1^{(5)} \xi_{(1)}^1 \xi_{(1)}^2 \xi_{(2)}^1 - 2 x_1 x_2^{(5)} \xi_{(1)}^1 \xi_{(1)}^2 \xi_{(2)}^1 + x_2 x_1^{(5)} \xi_{(1)}^2 \xi_{(1)}^2 \xi_{(2)}^1
   \\&\quad
        + 2 x_2 x_3^{(5)} \xi_{(1)}^1 \xi_{(1)}^2 \xi_{(2)}^1,
   \\
   \pi_7
        &=  3 x_1 x_1^{(6)} \xi^1 \xi_{(6)}^1 - 4 x_1 x_1^{(6)} \xi_{(1)}^2 \xi_{(5)}^1 - 6 x_1 x_1^{(6)} \xi_{(1)}^2 \xi_{(5)}^2 - 2 x_1 x_1^{(6)} \xi_{(2)}^1 \xi_{(4)}^1 + 2 x_1 x_2^{(6)} \xi^1 \xi_{(6)}^2
   \\&\quad
        + 2 x_1 x_2^{(6)} \xi_{(1)}^1 \xi_{(5)}^1 + 4 x_1 x_2^{(6)} \xi_{(1)}^1 \xi_{(5)}^2 + 2 x_1 x_2^{(6)} \xi_{(1)}^2 \xi_{(5)}^3 - x_1 x_2^{(6)} \xi_{(2)}^1 \xi_{(4)}^2 + x_1 x_2^{(6)} \xi_{(3)}^1 \xi_{(3)}^1
   \\&\quad
        + 2 x_1 x_3^{(6)} \xi^1 \xi_{(6)}^3 - 3 x_1 x_3^{(6)} \xi_{(1)}^2 \xi_{(5)}^3 - 3 x_1 x_3^{(6)} \xi_{(3)}^1 \xi_{(3)}^1 + x_1 x_4^{(6)} \xi^1 \xi_{(6)}^4 + 2 x_1 x_4^{(6)} \xi_{(1)}^1 \xi_{(5)}^3
   \\&\quad
        - 4 x_2 x_1^{(6)} \xi^2 \xi_{(6)}^1 - 5 x_2 x_2^{(6)} \xi^2 \xi_{(6)}^2 + 2  x_2 x_2^{(6)} \xi_{(1)}^2 \xi_{(5)}^2 + 3 x_2 x_2^{(6)} \xi_{(2)}^1 \xi_{(4)}^1 - 5 x_2 x_3^{(6)} \xi^2 \xi_{(6)}^3
   \\&\quad
        - 3  x_2 x_3^{(6)} \xi_{(2)}^1 \xi_{(4)}^1 - 6 x_2 x_4^{(6)} \xi^2 \xi_{(6)}^4 + 4 x_2 x_4^{(6)} \xi_{(1)}^1 \xi_{(5)}^2 - 4  x_2 x_4^{(6)} \xi_{(2)}^1 \xi_{(4)}^2 + 5 x_1 x_1^{(6)} \xi_{(1)}^2 \xi_{(1)}^2 \xi_{(3)}^1
   \\&\quad
        + 2 x_1 x_2^{(6)} \xi_{(1)}^1 \xi_{(1)}^2 \xi_{(3)}^1 - 4 x_1 x_3^{(6)} \xi_{(1)}^1 \xi_{(1)}^2 \xi_{(3)}^1 + 2 x_1 x_1^{(6)} \xi_{(1)}^1 \xi_{(1)}^2 \xi_{(1)}^2 \xi_{(1)}^2,
\end{align*}
\begin{align*}
   \pi_8
        &=   - 3 x_1 x_1^{(7)} \xi^1 \xi_{(7)}^1 + 6 x_1 x_1^{(7)} \xi_{(1)}^1 \xi_{(6)}^1 + 6 x_1 x_1^{(7)} \xi_{(1)}^2 \xi_{(6)}^2 + 4 x_1 x_1^{(7)} \xi_{(1)}^2 \xi_{(6)}^3 + 6 x_1 x_1^{(7)} \xi_{(2)}^1 \xi_{(5)}^1
   \\&\quad
        + 6 x_1 x_1^{(7)} \xi_{(2)}^1 \xi_{(5)}^2 - 2 x_1 x_1^{(7)} \xi_{(3)}^1 \xi_{(4)}^1 - 3 x_1 x_2^{(7)} \xi^1 \xi_{(7)}^2 - 2 x_1 x_2^{(7)} \xi_{(1)}^1 \xi_{(6)}^1 - 3 x_1 x_2^{(7)} \xi_{(1)}^2 \xi_{(6)}^2
   \\&\quad
        - 2 x_1 x_2^{(7)} \xi_{(1)}^2 \xi_{(6)}^3 - x_1 x_2^{(7)} \xi_{(2)}^1 \xi_{(5)}^1 + x_1 x_2^{(7)} \xi_{(3)}^1 \xi_{(4)}^1 - 2 x_1 x_3^{(7)} \xi^1 \xi_{(7)}^3 + 4 x_1 x_3^{(7)} \xi_{(1)}^1 \xi_{(6)}^3
   \\&\quad
        + 3 x_1 x_3^{(7)} \xi_{(1)}^2 \xi_{(6)}^4 + 3 x_1 x_3^{(7)} \xi_{(2)}^1 \xi_{(5)}^3 + 6 x_1 x_3^{(7)} \xi_{(3)}^1 \xi_{(4)}^2 - 2 x_1 x_4^{(7)} \xi^1 \xi_{(7)}^4 - 2 x_1 x_4^{(7)} \xi_{(1)}^1 \xi_{(6)}^3
   \\&\quad
        - 2 x_1 x_4^{(7)} \xi_{(1)}^2 \xi_{(6)}^4 - x_1 x_4^{(7)} \xi_{(2)}^1 \xi_{(5)}^3 - 3 x_1 x_4^{(7)} \xi_{(3)}^1 \xi_{(4)}^2 - x_1 x_5^{(7)} \xi^1 \xi_{(7)}^5 + 5 x_2 x_1^{(7)} \xi^2 \xi_{(7)}^1
   \\&\quad
        + 3 x_2 x_1^{(7)} \xi_{(1)}^2 \xi_{(6)}^1 + 5 x_2 x_2^{(7)} \xi^2 \xi_{(7)}^2 + 6 x_2 x_3^{(7)} \xi^2 \xi_{(7)}^3 + 2 x_2 x_3^{(7)} \xi_{(1)}^1 \xi_{(6)}^1 + 2 x_2 x_3^{(7)} \xi_{(1)}^2 \xi_{(6)}^3
   \\&\quad
        + 4 x_2 x_3^{(7)} \xi_{(2)}^1 \xi_{(5)}^1 + 2 x_2 x_3^{(7)} \xi_{(3)}^1 \xi_{(4)}^1 + 6 x_2 x_4^{(7)} \xi^2 \xi_{(7)}^4 + 4 x_2 x_4^{(7)} \xi_{(2)}^1 \xi_{(5)}^2 + 7 x_2 x_5^{(7)} \xi^2 \xi_{(7)}^5
   \\&\quad
        - 2 x_2 x_5^{(7)} \xi_{(1)}^1 \xi_{(6)}^2 + 4 x_2 x_5^{(7)} \xi_{(1)}^1 \xi_{(6)}^3 + x_2 x_5^{(7)} \xi_{(1)}^2 \xi_{(6)}^4 + 9 x_2 x_5^{(7)} \xi_{(2)}^1 \xi_{(5)}^3 + 5 x_2 x_5^{(7)} \xi_{(3)}^1 \xi_{(4)}^2
   \\&\quad
        - 5 x_1 x_1^{(7)} \xi_{(1)}^2 \xi_{(1)}^2 \xi_{(4)}^2 - 10 x_1 x_1^{(7)} \xi_{(1)}^2 \xi_{(2)}^1 \xi_{(3)}^1 + 2 x_1 x_2^{(7)} \xi_{(1)}^2 \xi_{(1)}^2 \xi_{(4)}^2 + 3 x_1 x_2^{(7)} \xi_{(1)}^2 \xi_{(2)}^1 \xi_{(3)}^1
   \\&\quad
        + 4 x_1 x_3^{(7)} \xi_{(1)}^1 \xi_{(1)}^2 \xi_{(4)}^2 + 4 x_1 x_3^{(7)} \xi_{(1)}^1 \xi_{(2)}^1 \xi_{(3)}^1 - 2 x_1 x_4^{(7)} \xi_{(1)}^1 \xi_{(1)}^2 \xi_{(4)}^2 - 2 x_1 x_4^{(7)} \xi_{(1)}^1 \xi_{(2)}^1 \xi_{(3)}^1
   \\&\quad
        + 3 x_2 x_1^{(7)} \xi_{(1)}^2 \xi_{(1)}^2 \xi_{(4)}^1 + 4 x_2 x_3^{(7)} \xi_{(1)}^1 \xi_{(1)}^2 \xi_{(4)}^1 + x_2 x_3^{(7)} \xi_{(1)}^2 \xi_{(1)}^2 \xi_{(4)}^2 + 2 x_2 x_3^{(7)} \xi_{(1)}^2 \xi_{(2)}^1 \xi_{(3)}^1
   \\&\quad
        + 4 x_2 x_5^{(7)} \xi_{(1)}^1 \xi_{(1)}^1 \xi_{(4)}^1 + 4 x_2 x_5^{(7)} \xi_{(1)}^1 \xi_{(1)}^2 \xi_{(4)}^2 + 10 x_2 x_5^{(7)} \xi_{(1)}^1 \xi_{(2)}^1 \xi_{(3)}^1 - 6 x_1 x_1^{(7)} \xi_{(1)}^1 \xi_{(1)}^2 \xi_{(1)}^2 \xi_{(2)}^1
   \\&\quad
        + 2 x_1 x_2^{(7)} \xi_{(1)}^1 \xi_{(1)}^2 \xi_{(1)}^2 \xi_{(2)}^1 + x_2 x_1^{(7)} \xi_{(1)}^2 \xi_{(1)}^2 \xi_{(1)}^2 \xi_{(2)}^1 + 2 x_2 x_3^{(7)} \xi_{(1)}^1 \xi_{(1)}^2 \xi_{(1)}^2 \xi_{(2)}^1
   \\&\quad
        + 4 x_2 x_5^{(7)} \xi_{(1)}^1 \xi_{(1)}^1 \xi_{(1)}^2 \xi_{(2)}^1.
\end{align*}
None of the terms above is in $\mathfrak a^2$. For $m\ge 3$ the tensor appears to have nondiagonal terms.

\subsection{A complete intersection given by two polynomials} Let us consider the ideal in $\bs k[x_1,x_2,x_3,x_4]$ generated by the two quadratic  monomials $f_1 = x_1x_2$ and $f_2 = x_3 x_4$ with diagonal Poisson bracket $\{x_i,x_j\}=x_ix_j$ for $1\le i<j\le 4$. It turns out that
\begin{align*}
    \pi_0
        &=  x_1 x_2 \xi_{(1)}^1 + x_3 x_4 \xi_{(1)}^2,\\
    \pi_1
        &=  x_1 x_2 \xi^1 \xi^2 + x_2 x_3 \xi^2 \xi^3 + x_3 x_4 \xi^3 \xi^4,
    \\
    \pi_2
        &=  - x_1 x^{(1)}_1 \xi^1 \xi_{(1)}^1 + x_2 x^{(1)}_1 \xi^2 \xi_{(1)}^1 - x_2 x^{(1)}_2 \xi^2 \xi_{(1)}^2 + x_3 x^{(1)}_1 \xi^3 \xi_{(1)}^1
            - x_3 x^{(1)}_2 \xi^3 \xi_{(1)}^2 + x_4 x^{(1)}_2 \xi^4 \xi_{(1)}^2,
    \\
    \pi_3
        &=  x^{(1)}_1 x^{(1)}_2 \xi_{(1)}^1 \xi_{(1)}^2,
    \\
    \pi_m
        &=  0\quad \mbox{for }m\ge 4.
\end{align*}
In other words, the Koszul complex here is a dg Poisson algebra. As there are only two indices $i=1,2$, any cubic expression in the $x^{(1)}_i$ has to vanish. This explains the vanishing of $\pi_m$ for $m\ge 4$. More generally, we observe that the Koszul complex of a regular sequence of length $2$ with vanishing $\dP Z-[Z,Z]$ has the structure of a dg Poisson algebra.  We note that the $\pi_m$ have a diagonal shape for $m\ge 1$. Up to now all examples of Poisson complete intersections that we are aware of are monomial, Casimir, or a hypersurface.

\subsection{An ideal generated two cubic Casimirs}
We consider the ideal of $\bs k[x_1, x_2, x_3, x_4]$ generated by the two monomials $f_1 = x_1 x_2 x_3$ and $f_2 = x_2 x_3 x_4$. As the Poisson bracket we take that of Theorem \ref{thm:detbracket} with derivations $\partial/\partial x_1$, $\partial/\partial x_2$, $\partial/\partial x_3$, $\partial/\partial x_4$ and $g=(x_2x_3)^{-1}$  (it can also be seen as a diagonal bracket). To calculate the first terms of $\pi_0$ we used \emph{Macaulay2}:
\begin{align*}
    \pi_0^{\le 11}
        &=  x_1 x_2 x_3 \xi_{(1)}^1 + x_2 x_3 x_4 \xi_{(1)}^2 + x_1 x^{(1)}_2 \xi_{(2)}^1 - x_4 x^{(1)}_1 \xi_{(2)}^1+  x_2 x_3 x^{(2)}_1 \xi_{(3)}^1 - x^{(1)}_1 x^{(1)}_2 \xi_{(3)}^1\\
        &\quad+  x_1 x^{(3)}_1 \xi_{(4)}^2 + x_4 x^{(3)}_1 \xi_{(4)}^1 - x^{(1)}_1 x^{(2)}_1 \xi_{(4)}^2 - x^{(1)}_2 x^{(2)}_1 \xi_{(4)}^1
    \\
        &\quad+  2 x_1 x^{(4)}_1 \xi_{(5)}^1 - 2 x_4 x^{(4)}_2 \xi_{(5)}^1 + x_2 x_3 x^{(4)}_1 \xi_{(5)}^2 + x_2 x_3 x^{(4)}_2 \xi_{(5)}^3 - x^{(1)}_1 x^{(3)}_1 \xi_{(5)}^3 - x^{(1)}_2 x^{(3)}_1 \xi_{(5)}^2 + x^{(2)}_1 x^{(2)}_1 \xi_{(5)}^1
    \\
        &\quad+ x_1 x_2^{(5)} \xi_{(6)}^2 + x_1 x_3^{(5)} \xi_{(6)}^4 - x_1^{(1)} x_1^{(4)} \xi_{(6)}^2
            - 2 x_1^{(1)} x_1^{(4)} \xi_{(6)}^3 - x_1^{(1)} x_2^{(4)} \xi_{(6)}^4 - x_1^{(2)} x_1^{(3)} \xi_{(6)}^3
            - x_2^{(1)} x_1^{(4)} \xi_{(6)}^1
        \\&\quad
            - x_2^{(1)} x_2^{(4)} \xi_{(6)}^2 - x_2^{(1)} x_2^{(4)} \xi_{(6)}^3 + x_4 x_2^{(5)} \xi_{(6)}^1
            + x_4 x_3^{(5)} \xi_{(6)}^2 + 3 x_4 x_3^{(5)} \xi_{(6)}^3 + x_2 x_3 x_1^{(5)} \xi_{(6)}^3
       \\
        &\quad + 3 x_1 x_1^{(6)} \xi_{(7)}^1 + x_1 x_3^{(6)} \xi_{(7)}^2 - x_1^{(1)} x_1^{(5)} \xi_{(7)}^2
            - x_1^{(1)} x_2^{(5)} \xi_{(7)}^4 - x_1^{(1)} x_3^{(5)} \xi_{(7)}^5 + x_1^{(2)} x_1^{(4)} \xi_{(7)}^1
            + x_1^{(2)} x_2^{(4)} \xi_{(7)}^2
        \\&\quad
            - x_2^{(1)} x_1^{(5)} \xi_{(7)}^1 - x_2^{(1)} x_2^{(5)} \xi_{(7)}^3 - x_2^{(1)} x_3^{(5)} \xi_{(7)}^4
            - 3 x_4 x_2^{(6)} \xi_{(7)}^1 + x_4 x_3^{(6)} \xi_{(7)}^1 - 3   x_4 x_4^{(6)} \xi_{(7)}^2
        \\&\quad
            + x_2 x_3 x_1^{(6)} \xi_{(7)}^3 + x_2 x_3 x_2^{(6)} \xi_{(7)}^4 + x_2 x_3 x_4^{(6)} \xi_{(7)}^5
        \\
        &\quad
        +x_1 x_3^{(7)} \xi_{(8)}^4 + x_1 x_4^{(7)} \xi_{(8)}^6 + x_1 x_5^{(7)} \xi_{(8)}^8
        - 3 x_1^{(1)} x_1^{(6)} \xi_{(8)}^1 - x_1^{(1)} x_1^{(6)} \xi_{(8)}^4 - x_1^{(1)} x_2^{(6)} \xi_{(8)}^6
        - x_1^{(1)} x_3^{(6)} \xi_{(8)}^2
    \\&\quad
        - x_1^{(1)} x_4^{(6)} \xi_{(8)}^8 + 3 x_1^{(2)} x_2^{(5)} \xi_{(8)}^1 + x_1^{(2)} x_2^{(5)} \xi_{(8)}^4
        - x_1^{(2)} x_2^{(5)} \xi_{(8)}^5 + 3 x_1^{(2)} x_3^{(5)} \xi_{(8)}^2 + x_1^{(2)} x_3^{(5)} \xi_{(8)}^6
        - x_1^{(2)} x_3^{(5)} \xi_{(8)}^7
    \\&\quad
        - 4 x_1^{(3)} x_1^{(4)} \xi_{(8)}^1 - x_1^{(3)} x_1^{(4)} \xi_{(8)}^4 + x_1^{(3)} x_1^{(4)} \xi_{(8)}^5
        - 4 x_1^{(3)} x_2^{(4)} \xi_{(8)}^2 - x_1^{(3)} x_2^{(4)} \xi_{(8)}^6 + x_1^{(3)} x_2^{(4)} \xi_{(8)}^7
        - x_2^{(1)} x_1^{(6)} \xi_{(8)}^3
    \\&\quad
        + 3 x_2^{(1)} x_2^{(6)} \xi_{(8)}^1 - x_2^{(1)} x_2^{(6)} \xi_{(8)}^5 - x_2^{(1)} x_3^{(6)} \xi_{(8)}^1
        + 3 x_2^{(1)} x_4^{(6)} \xi_{(8)}^2 - x_2^{(1)} x_4^{(6)} \xi_{(8)}^7 + x_4 x_3^{(7)} \xi_{(8)}^3
        + x_4 x_4^{(7)} \xi_{(8)}^5
    \\&\quad
        + x_4 x_5^{(7)} \xi_{(8)}^7 + x_2 x_3 x_1^{(7)} \xi_{(8)}^1 + x_2 x_3 x_2^{(7)} \xi_{(8)}^2\\
        &\quad
        +x_1 x_1^{(8)} \xi_{(9)}^2 + x_1 x_2^{(8)} \xi_{(9)}^4 + 2 x_1 x_3^{(8)} \xi_{(9)}^5 + x_1 x_5^{(8)} \xi_{(9)}^6
        + 2 x_1 x_7^{(8)} \xi_{(9)}^7 - x_1^{(1)} x_1^{(7)} \xi_{(9)}^2 - x_1^{(1)} x_2^{(7)} \xi_{(9)}^4
    \\&\quad
        - x_1^{(1)} x_3^{(7)} \xi_{(9)}^9 - x_1^{(1)} x_4^{(7)} \xi_{(9)}^{11} - x_1^{(1)} x_5^{(7)} \xi_{(9)}^{13}
        - 3 x_1^{(2)} x_1^{(6)} \xi_{(9)}^1 + 2 x_1^{(2)} x_1^{(6)} \xi_{(9)}^5 - 3 x_1^{(2)} x_2^{(6)} \xi_{(9)}^2
        + x_1^{(2)} x_2^{(6)} \xi_{(9)}^6
    \\&\quad
        + x_1^{(2)} x_3^{(6)} \xi_{(9)}^2 - x_1^{(2)} x_3^{(6)} \xi_{(9)}^3 - 3 x_1^{(2)} x_4^{(6)} \xi_{(9)}^4
        + 2 x_1^{(2)} x_4^{(6)} \xi_{(9)}^7 - x_1^{(3)} x_1^{(5)} \xi_{(9)}^2 + x_1^{(3)} x_1^{(5)} \xi_{(9)}^3
        - x_1^{(3)} x_2^{(5)} \xi_{(9)}^9
    \\&\quad
        + x_1^{(3)} x_2^{(5)} \xi_{(9)}^{10} - x_1^{(3)} x_3^{(5)} \xi_{(9)}^{11} + x_1^{(3)} x_3^{(5)} \xi_{(9)}^{12}
        + 2 x_1^{(4)} x_1^{(4)} \xi_{(9)}^1 - x_1^{(4)} x_1^{(4)} \xi_{(9)}^5 + 2 x_1^{(4)} x_2^{(4)} \xi_{(9)}^2
        + 2 x_1^{(4)} x_2^{(4)} \xi_{(9)}^3
    \\&\quad
        - x_1^{(4)} x_2^{(4)} \xi_{(9)}^6 - x_2^{(1)} x_1^{(7)} \xi_{(9)}^1 - x_2^{(1)} x_2^{(7)} \xi_{(9)}^3
        - x_2^{(1)} x_3^{(7)} \xi_{(9)}^8 - x_2^{(1)} x_4^{(7)} \xi_{(9)}^{10} - x_2^{(1)} x_5^{(7)} \xi_{(9)}^{12}
        + 2 x_2^{(4)} x_2^{(4)} \xi_{(9)}^4
    \\&\quad
        - x_2^{(4)} x_2^{(4)} \xi_{(9)}^7 + x_4 x_1^{(8)} \xi_{(9)}^1 + x_4 x_2^{(8)} \xi_{(9)}^3
        - 2 x_4 x_4^{(8)} \xi_{(9)}^5 - x_4 x_6^{(8)} \xi_{(9)}^6 - 2 x_4 x_8^{(8)} \xi_{(9)}^7
        + x_2 x_3 x_3^{(8)} \xi_{(9)}^8
    \\&\quad
        + x_2 x_3 x_4^{(8)} \xi_{(9)}^9 + x_2 x_3 x_5^{(8)} \xi_{(9)}^{10} + x_2 x_3 x_6^{(8)} \xi_{(9)}^{11}
        + x_2 x_3 x_7^{(8)} \xi_{(9)}^{12} + x_2 x_3 x_8^{(8)} \xi_{(9)}^{13}\\
    &\quad+
    x_1 x_1^{(9)} \xi_{(10)}^1 + x_1 x_3^{(9)} \xi_{(10)}^2 + 2   x_1 x_8^{(9)} \xi_{(10)}^5 + 3 x_1 x_9^{(9)} \xi_{(10)}^{10} + x_1 x_{10}^{(9)} \xi_{(10)}^{11} + x_1 x_{11}^{(9)} \xi_{(10)}^{15}
    \\&\quad
    + x_1 x_{12}^{(9)} \xi_{(10)}^{16} + x_1 x_{13}^{(9)} \xi_{(10)}^{18} - 3 x_1^{(1)} x_1^{(8)} \xi_{(10)}^8 - x_1^{(1)} x_2^{(8)} \xi_{(10)}^{14} - 2   x_1^{(1)} x_3^{(8)} \xi_{(10)}^5 - 4 x_1^{(1)} x_3^{(8)} \xi_{(10)}^6
    \\&\quad
    - 3   x_1^{(1)} x_4^{(8)} \xi_{(10)}^{10} - x_1^{(1)} x_5^{(8)} \xi_{(10)}^{11} - 3   x_1^{(1)} x_5^{(8)} \xi_{(10)}^{12} - x_1^{(1)} x_6^{(8)} \xi_{(10)}^{15} - x_1^{(1)} x_7^{(8)} \xi_{(10)}^{16}
    \\&\quad
    - 2 x_1^{(1)} x_7^{(8)} \xi_{(10)}^{17} - x_1^{(1)} x_8^{(8)} \xi_{(10)}^{18} + x_1^{(2)} x_1^{(7)} \xi_{(10)}^1 + x_1^{(2)} x_2^{(7)} \xi_{(10)}^2 + 5 x_1^{(2)} x_3^{(7)} \xi_{(10)}^4 + x_1^{(2)} x_3^{(7)} \xi_{(10)}^5
    \\&\quad
    - x_1^{(2)} x_3^{(7)} \xi_{(10)}^6 - x_1^{(2)} x_3^{(7)} \xi_{(10)}^7 + 5 x_1^{(2)} x_4^{(7)} \xi_{(10)}^8 + 5   x_1^{(2)} x_4^{(7)} \xi_{(10)}^9 + x_1^{(2)} x_4^{(7)} \xi_{(10)}^{10} - 2   x_1^{(2)} x_4^{(7)} \xi_{(10)}^{12}
    \\&\quad
    - x_1^{(2)} x_4^{(7)} \xi_{(10)}^{13} + 5   x_1^{(2)} x_5^{(7)} \xi_{(10)}^{14} + x_1^{(2)} x_5^{(7)} \xi_{(10)}^{15} - x_1^{(2)} x_5^{(7)} \xi_{(10)}^{16} - 3 x_1^{(2)} x_5^{(7)} \xi_{(10)}^{17} + x_1^{(3)} x_1^{(6)} \xi_{(10)}^4
    \\&\quad
    - x_1^{(3)} x_1^{(6)} \xi_{(10)}^5 - 3   x_1^{(3)} x_1^{(6)} \xi_{(10)}^6 + x_1^{(3)} x_1^{(6)} \xi_{(10)}^7 + 4   x_1^{(3)} x_2^{(6)} \xi_{(10)}^8 - 5 x_1^{(3)} x_2^{(6)} \xi_{(10)}^9 - x_1^{(3)} x_2^{(6)} \xi_{(10)}^{10}
    \\&\quad
    - x_1^{(3)} x_2^{(6)} \xi_{(10)}^{12} + x_1^{(3)} x_2^{(6)} \xi_{(10)}^{13} - 3 x_1^{(3)} x_3^{(6)} \xi_{(10)}^8 + 3   x_1^{(3)} x_3^{(6)} \xi_{(10)}^9 - 2 x_1^{(3)} x_4^{(6)} \xi_{(10)}^{14} - x_1^{(3)} x_4^{(6)} \xi_{(10)}^{15}
    \\&\quad
    + x_1^{(3)} x_4^{(6)} \xi_{(10)}^{16} + x_1^{(3)} x_4^{(6)} \xi_{(10)}^{17} - x_1^{(4)} x_1^{(5)} \xi_{(10)}^1 - 4   x_1^{(4)} x_2^{(5)} \xi_{(10)}^4 + 2 x_1^{(4)} x_2^{(5)} \xi_{(10)}^6 - 10   x_1^{(4)} x_3^{(5)} \xi_{(10)}^8
    \\&\quad
    - x_1^{(4)} x_3^{(5)} \xi_{(10)}^9 - 2   x_1^{(4)} x_3^{(5)} \xi_{(10)}^{10} + x_1^{(4)} x_3^{(5)} \xi_{(10)}^{11} + 4   x_1^{(4)} x_3^{(5)} \xi_{(10)}^{12} - x_1^{(4)} x_3^{(5)} \xi_{(10)}^{13} - 2   x_2^{(1)} x_1^{(8)} \xi_{(10)}^4
    \\&\quad
    - 3 x_2^{(1)} x_2^{(8)} \xi_{(10)}^9 - x_2^{(1)} x_3^{(8)} \xi_{(10)}^3 + 5 x_2^{(1)} x_4^{(8)} \xi_{(10)}^4 - x_2^{(1)} x_4^{(8)} \xi_{(10)}^5 - x_2^{(1)} x_4^{(8)} \xi_{(10)}^6 - x_2^{(1)} x_4^{(8)} \xi_{(10)}^7
    \\&\quad
    - 2 x_2^{(1)} x_5^{(8)} \xi_{(10)}^7 + 5   x_2^{(1)} x_6^{(8)} \xi_{(10)}^8 + 5 x_2^{(1)} x_6^{(8)} \xi_{(10)}^9 + x_2^{(1)} x_6^{(8)} \xi_{(10)}^{10} - x_2^{(1)} x_6^{(8)} \xi_{(10)}^{11} - 2   x_2^{(1)} x_6^{(8)} \xi_{(10)}^{12}
    \\&\quad
    - x_2^{(1)} x_6^{(8)} \xi_{(10)}^{13} - 3   x_2^{(1)} x_7^{(8)} \xi_{(10)}^{13} + 5 x_2^{(1)} x_8^{(8)} \xi_{(10)}^{14} + x_2^{(1)} x_8^{(8)} \xi_{(10)}^{15} - 2 x_2^{(1)} x_8^{(8)} \xi_{(10)}^{16} - 3   x_2^{(1)} x_8^{(8)} \xi_{(10)}^{17}
    \\&\quad
    - x_2^{(4)} x_1^{(5)} \xi_{(10)}^2 + 4   x_2^{(4)} x_2^{(5)} \xi_{(10)}^8 - 5 x_2^{(4)} x_2^{(5)} \xi_{(10)}^9 + 2   x_2^{(4)} x_2^{(5)} \xi_{(10)}^{10} - x_2^{(4)} x_2^{(5)} \xi_{(10)}^{11} - x_2^{(4)} x_2^{(5)} \xi_{(10)}^{12}
    \\&\quad
    + x_2^{(4)} x_2^{(5)} \xi_{(10)}^{13} - 2   x_2^{(4)} x_3^{(5)} \xi_{(10)}^{14} + x_2^{(4)} x_3^{(5)} \xi_{(10)}^{17} - x_4 x_2^{(9)} \xi_{(10)}^1 - x_4 x_4^{(9)} \xi_{(10)}^2 + x_4 x_8^{(9)} \xi_{(10)}^3 - 5 x_4 x_9^{(9)} \xi_{(10)}^4
    \\&\quad
    + x_4 x_9^{(9)} \xi_{(10)}^5 + 5 x_4 x_9^{(9)} \xi_{(10)}^6 + x_4 x_9^{(9)} \xi_{(10)}^7 + 2 x_4 x_{10}^{(9)} \xi_{(10)}^7 - 5   x_4 x_{11}^{(9)} \xi_{(10)}^8 - 5 x_4 x_{11}^{(9)} \xi_{(10)}^9 - x_4 x_{11}^{(9)} \xi_{(10)}^{10}
    \\&\quad
    + x_4 x_{11}^{(9)} \xi_{(10)}^{11} + 5   x_4 x_{11}^{(9)} \xi_{(10)}^{12} + x_4 x_{11}^{(9)} \xi_{(10)}^{13} + 3   x_4 x_{12}^{(9)} \xi_{(10)}^{13} - 5 x_4 x_{13}^{(9)} \xi_{(10)}^{14} - x_4 x_{13}^{(9)} \xi_{(10)}^{15}
    \\&\quad
    + 2 x_4 x_{13}^{(9)} \xi_{(10)}^{16} + 5 x_4 x_{13}^{(9)} \xi_{(10)}^{17} + 2   x_2 x_3 x_1^{(9)} \xi_{(10)}^4 + 3   x_2 x_3 x_2^{(9)} \xi_{(10)}^8 + 3   x_2 x_3 x_3^{(9)} \xi_{(10)}^9 + x_2 x_3 x_4^{(9)} \xi_{(10)}^{14}
    \\&\quad
    + 2 x_2 x_3 x_5^{(9)} \xi_{(10)}^6 + 3   x_2 x_3 x_6^{(9)} \xi_{(10)}^{12} + x_2 x_3 x_7^{(9)} \xi_{(10)}^{17}\\
    &\quad
    +10 x_1 x_3^{(10)} \xi_{(11)}^3 + 3 x_1 x_4^{(10)} \xi_{(11)}^7 + 3   x_1 x_6^{(10)} \xi_{(11)}^6 + 3 x_1 x_7^{(10)} \xi_{(11)}^8 + x_1 x_8^{(10)} \xi_{(11)}^{11} + x_1 x_9^{(10)} \xi_{(11)}^{12} + x_1 x_{12}^{(10)} \xi_{(11)}^{10}
    \\&\quad
    + x_1 x_{13}^{(10)} \xi_{(11)}^{13} + x_1 x_{14}^{(10)} \xi_{(11)}^{16} + x_1 x_{17}^{(10)} \xi_{(11)}^{15} - x_1^{(1)} x_1^{(9)} \xi_{(11)}^1 - 6 x_1^{(1)} x_1^{(9)} \xi_{(11)}^7 - 3   x_1^{(1)} x_2^{(9)} \xi_{(11)}^{11}
    \\&\quad
    - x_1^{(1)} x_3^{(9)} \xi_{(11)}^2 - 3   x_1^{(1)} x_3^{(9)} \xi_{(11)}^{12} - x_1^{(1)} x_4^{(9)} \xi_{(11)}^{16} - 6   x_1^{(1)} x_5^{(9)} \xi_{(11)}^6 - 3 x_1^{(1)} x_6^{(9)} \xi_{(11)}^{10} - x_1^{(1)} x_7^{(9)} \xi_{(11)}^{15}
    \\&\quad
    - 2 x_1^{(1)} x_8^{(9)} \xi_{(11)}^{18} - 3   x_1^{(1)} x_9^{(9)} \xi_{(11)}^{20} - x_1^{(1)} x_{10}^{(9)} \xi_{(11)}^{21} - x_1^{(1)} x_{11}^{(9)} \xi_{(11)}^{23} - x_1^{(1)} x_{12}^{(9)} \xi_{(11)}^{24}
    \\&\quad
    - x_1^{(1)} x_{13}^{(9)} \xi_{(11)}^{25} + x_1^{(2)} x_1^{(8)} \xi_{(11)}^1 + 3   x_1^{(2)} x_1^{(8)} \xi_{(11)}^5 - 3 x_1^{(2)} x_1^{(8)} \xi_{(11)}^6 + 9   x_1^{(2)} x_1^{(8)} \xi_{(11)}^7 + 3 x_1^{(2)} x_1^{(8)} \xi_{(11)}^8
    \\&\quad
    - 3   x_1^{(2)} x_1^{(8)} \xi_{(11)}^9 + x_1^{(2)} x_2^{(8)} \xi_{(11)}^2 - x_1^{(2)} x_2^{(8)} \xi_{(11)}^{10} + x_1^{(2)} x_2^{(8)} \xi_{(11)}^{11} + 4   x_1^{(2)} x_2^{(8)} \xi_{(11)}^{12} + x_1^{(2)} x_2^{(8)} \xi_{(11)}^{13}
    \\&\quad
    - x_1^{(2)} x_2^{(8)} \xi_{(11)}^{14} + 10 x_1^{(2)} x_3^{(8)} \xi_{(11)}^3 - 4   x_1^{(2)} x_3^{(8)} \xi_{(11)}^4 + 3 x_1^{(2)} x_4^{(8)} \xi_{(11)}^6 - 15   x_1^{(2)} x_4^{(8)} \xi_{(11)}^7 + 3 x_1^{(2)} x_4^{(8)} \xi_{(11)}^8
    \\&\quad
    + 6   x_1^{(2)} x_5^{(8)} \xi_{(11)}^8 - 3 x_1^{(2)} x_5^{(8)} \xi_{(11)}^9 + 2   x_1^{(2)} x_6^{(8)} \xi_{(11)}^{10} - 5 x_1^{(2)} x_6^{(8)} \xi_{(11)}^{11} - 5   x_1^{(2)} x_6^{(8)} \xi_{(11)}^{12} + x_1^{(2)} x_6^{(8)} \xi_{(11)}^{13}
    \\&\quad
    + 3   x_1^{(2)} x_7^{(8)} \xi_{(11)}^{13} - 2 x_1^{(2)} x_7^{(8)} \xi_{(11)}^{14} + 3   x_1^{(2)} x_8^{(8)} \xi_{(11)}^{15} - 5 x_1^{(2)} x_8^{(8)} \xi_{(11)}^{16} - 2   x_1^{(3)} x_1^{(7)} \xi_{(11)}^1
    \\&\quad
    - 3 x_1^{(3)} x_1^{(7)} \xi_{(11)}^5 + 3   x_1^{(3)} x_1^{(7)} \xi_{(11)}^6 - 9 x_1^{(3)} x_1^{(7)} \xi_{(11)}^7 - 3   x_1^{(3)} x_1^{(7)} \xi_{(11)}^8 + 3 x_1^{(3)} x_1^{(7)} \xi_{(11)}^9 - 2   x_1^{(3)} x_2^{(7)} \xi_{(11)}^2
    \\&\quad
    + x_1^{(3)} x_2^{(7)} \xi_{(11)}^{10} - x_1^{(3)} x_2^{(7)} \xi_{(11)}^{11} - 4 x_1^{(3)} x_2^{(7)} \xi_{(11)}^{12} - x_1^{(3)} x_2^{(7)} \xi_{(11)}^{13} + x_1^{(3)} x_2^{(7)} \xi_{(11)}^{14} - x_1^{(3)} x_3^{(7)} \xi_{(11)}^{18}
    \\&\quad
    + x_1^{(3)} x_3^{(7)} \xi_{(11)}^{19} - x_1^{(3)} x_4^{(7)} \xi_{(11)}^{20} + x_1^{(3)} x_4^{(7)} \xi_{(11)}^{22} - x_1^{(3)} x_5^{(7)} \xi_{(11)}^{23} + x_1^{(3)} x_5^{(7)} \xi_{(11)}^{24} - 4   x_1^{(4)} x_1^{(6)} \xi_{(11)}^3
    \\&\quad
    + 2 x_1^{(4)} x_1^{(6)} \xi_{(11)}^4 + 6   x_1^{(4)} x_2^{(6)} \xi_{(11)}^1 + 18 x_1^{(4)} x_2^{(6)} \xi_{(11)}^5 - 18   x_1^{(4)} x_2^{(6)} \xi_{(11)}^6 + 36 x_1^{(4)} x_2^{(6)} \xi_{(11)}^7 + 12   x_1^{(4)} x_2^{(6)} \xi_{(11)}^8
    \\&\quad
    - 12 x_1^{(4)} x_2^{(6)} \xi_{(11)}^9 - 2   x_1^{(4)} x_3^{(6)} \xi_{(11)}^1 - 9 x_1^{(4)} x_3^{(6)} \xi_{(11)}^5 + 6   x_1^{(4)} x_3^{(6)} \xi_{(11)}^6 - 12 x_1^{(4)} x_3^{(6)} \xi_{(11)}^7 - 6   x_1^{(4)} x_3^{(6)} \xi_{(11)}^8
    \\&\quad
    + 6 x_1^{(4)} x_3^{(6)} \xi_{(11)}^9 + 6   x_1^{(4)} x_4^{(6)} \xi_{(11)}^2 - 6 x_1^{(4)} x_4^{(6)} \xi_{(11)}^{10} + 6   x_1^{(4)} x_4^{(6)} \xi_{(11)}^{11} + 15 x_1^{(4)} x_4^{(6)} \xi_{(11)}^{12} + 3   x_1^{(4)} x_4^{(6)} \xi_{(11)}^{13}
    \\&\quad
    - 2 x_1^{(4)} x_4^{(6)} \xi_{(11)}^{14} - 3   x_1^{(5)} x_2^{(5)} \xi_{(11)}^1 - 9 x_1^{(5)} x_2^{(5)} \xi_{(11)}^5 + 6   x_1^{(5)} x_2^{(5)} \xi_{(11)}^6 - 12 x_1^{(5)} x_2^{(5)} \xi_{(11)}^7 - 6   x_1^{(5)} x_2^{(5)} \xi_{(11)}^8
    \\&\quad
    + 6 x_1^{(5)} x_2^{(5)} \xi_{(11)}^9 - 3   x_1^{(5)} x_3^{(5)} \xi_{(11)}^2 + x_1^{(5)} x_3^{(5)} \xi_{(11)}^{10} + 2   x_1^{(5)} x_3^{(5)} \xi_{(11)}^{11} - 7 x_1^{(5)} x_3^{(5)} \xi_{(11)}^{12} - x_1^{(5)} x_3^{(5)} \xi_{(11)}^{13}
    \\&\quad
    + x_1^{(5)} x_3^{(5)} \xi_{(11)}^{14} + 2   x_2^{(1)} x_1^{(9)} \xi_{(11)}^3 - 2 x_2^{(1)} x_1^{(9)} \xi_{(11)}^4 + x_2^{(1)} x_2^{(9)} \xi_{(11)}^1 + 3 x_2^{(1)} x_2^{(9)} \xi_{(11)}^5 - 3   x_2^{(1)} x_2^{(9)} \xi_{(11)}^6
    \\&\quad
    + 3 x_2^{(1)} x_2^{(9)} \xi_{(11)}^7 + 3   x_2^{(1)} x_2^{(9)} \xi_{(11)}^8 - 3 x_2^{(1)} x_2^{(9)} \xi_{(11)}^9 - 3   x_2^{(1)} x_3^{(9)} \xi_{(11)}^5 + x_2^{(1)} x_4^{(9)} \xi_{(11)}^2 - x_2^{(1)} x_4^{(9)} \xi_{(11)}^{10}
    \\&\quad
    + x_2^{(1)} x_4^{(9)} \xi_{(11)}^{11} + x_2^{(1)} x_4^{(9)} \xi_{(11)}^{12} + x_2^{(1)} x_4^{(9)} \xi_{(11)}^{13} - x_2^{(1)} x_4^{(9)} \xi_{(11)}^{14} - 2 x_2^{(1)} x_5^{(9)} \xi_{(11)}^4 - 3   x_2^{(1)} x_6^{(9)} \xi_{(11)}^9
    \\&\quad
    - x_2^{(1)} x_7^{(9)} \xi_{(11)}^{14} - x_2^{(1)} x_8^{(9)} \xi_{(11)}^{17} - x_2^{(1)} x_9^{(9)} \xi_{(11)}^{18} - x_2^{(1)} x_9^{(9)} \xi_{(11)}^{19} - 2 x_2^{(1)} x_{10}^{(9)} \xi_{(11)}^{19} + x_2^{(1)} x_{11}^{(9)} \xi_{(11)}^{20}
    \\&\quad
    - x_2^{(1)} x_{11}^{(9)} \xi_{(11)}^{21} - x_2^{(1)} x_{11}^{(9)} \xi_{(11)}^{22} - 3 x_2^{(1)} x_{12}^{(9)} \xi_{(11)}^{22} + x_2^{(1)} x_{13}^{(9)} \xi_{(11)}^{23} - 2 x_2^{(1)} x_{13}^{(9)} \xi_{(11)}^{24}
    \\&\quad
    - 6 x_2^{(4)} x_1^{(6)} \xi_{(11)}^1 - 9 x_2^{(4)} x_1^{(6)} \xi_{(11)}^5 + 18   x_2^{(4)} x_1^{(6)} \xi_{(11)}^6 - 30 x_2^{(4)} x_1^{(6)} \xi_{(11)}^7 - 12   x_2^{(4)} x_1^{(6)} \xi_{(11)}^8 + 9 x_2^{(4)} x_1^{(6)} \xi_{(11)}^9\\
   &\quad
    + x_2^{(4)} x_2^{(6)} \xi_{(11)}^{10} - 4 x_2^{(4)} x_2^{(6)} \xi_{(11)}^{11} + 5   x_2^{(4)} x_2^{(6)} \xi_{(11)}^{12} - x_2^{(4)} x_2^{(6)} \xi_{(11)}^{13} - 2   x_2^{(4)} x_3^{(6)} \xi_{(11)}^2 + x_2^{(4)} x_3^{(6)} \xi_{(11)}^{10}
    \\&\quad
    + 2   x_2^{(4)} x_3^{(6)} \xi_{(11)}^{11} - 7 x_2^{(4)} x_3^{(6)} \xi_{(11)}^{12} - x_2^{(4)} x_3^{(6)} \xi_{(11)}^{13} + x_2^{(4)} x_3^{(6)} \xi_{(11)}^{14} - x_2^{(4)} x_4^{(6)} \xi_{(11)}^{15} + 2 x_2^{(4)} x_4^{(6)} \xi_{(11)}^{16}
    \\&\quad
    + 2   x_2^{(5)} x_3^{(5)} \xi_{(11)}^{20} - x_2^{(5)} x_3^{(5)} \xi_{(11)}^{21} + x_2^{(5)} x_3^{(5)} \xi_{(11)}^{22} - x_4 x_4^{(10)} \xi_{(11)}^3 + x_4 x_4^{(10)} \xi_{(11)}^4 - 5 x_4 x_5^{(10)} \xi_{(11)}^3 + x_4 x_6^{(10)} \xi_{(11)}^4
    \\&\quad
    - x_4 x_8^{(10)} \xi_{(11)}^5 + x_4 x_8^{(10)} \xi_{(11)}^6 - x_4 x_8^{(10)} \xi_{(11)}^7 - x_4 x_8^{(10)} \xi_{(11)}^8 + x_4 x_8^{(10)} \xi_{(11)}^9 + x_4 x_9^{(10)} \xi_{(11)}^5 - 5 x_4 x_{10}^{(10)} \xi_{(11)}^6
    \\&\quad
    + 5   x_4 x_{10}^{(10)} \xi_{(11)}^7 - x_4 x_{10}^{(10)} \xi_{(11)}^8 - 6   x_4 x_{11}^{(10)} \xi_{(11)}^8 + x_4 x_{12}^{(10)} \xi_{(11)}^9 + x_4 x_{14}^{(10)} \xi_{(11)}^{10} - x_4 x_{14}^{(10)} \xi_{(11)}^{11}
    \\&\quad
    - x_4 x_{14}^{(10)} \xi_{(11)}^{12} - x_4 x_{14}^{(10)} \xi_{(11)}^{13} + x_4 x_{14}^{(10)} \xi_{(11)}^{14} - 5 x_4 x_{15}^{(10)} \xi_{(11)}^{10} + 5 x_4 x_{15}^{(10)} \xi_{(11)}^{11} + 5 x_4 x_{15}^{(10)} \xi_{(11)}^{12}
    \\&\quad
    - x_4 x_{15}^{(10)} \xi_{(11)}^{13} - 3 x_4 x_{16}^{(10)} \xi_{(11)}^{13} + x_4 x_{17}^{(10)} \xi_{(11)}^{14} - 5 x_4 x_{18}^{(10)} \xi_{(11)}^{15} + 5 x_4 x_{18}^{(10)} \xi_{(11)}^{16} + x_2 x_3 x_1^{(10)} \xi_{(11)}^1
    \\&\quad
    + x_2 x_3 x_2^{(10)} \xi_{(11)}^2 + x_2 x_3 x_3^{(10)} \xi_{(11)}^{17} + x_2 x_3 x_5^{(10)} \xi_{(11)}^{18} + x_2 x_3 x_7^{(10)} \xi_{(11)}^{19} + x_2 x_3 x_{10}^{(10)} \xi_{(11)}^{20} + x_2 x_3 x_{11}^{(10)} \xi_{(11)}^{21}
    \\&\quad
    + x_2 x_3 x_{13}^{(10)} \xi_{(11)}^{22} + x_2 x_3 x_{15}^{(10)} \xi_{(11)}^{23} + x_2 x_3 x_{16}^{(10)} \xi_{(11)}^{24} + x_2 x_3 x_{18}^{(10)} \xi_{(11)}^{25}.
\end{align*}
We were able to determine $\pi_m$ for $1\le m\le 12$:
\begin{align*}
    \pi_1        &=  x_1 x_2 \xi^1 \xi^2 - x_1 x_3 \xi^1 \xi^3 + x_2 x_3 \xi^2 \xi^3 - x_2 x_4 \xi^2 \xi^4 + x_3 x_4 \xi^3 \xi^4,
     \quad  \pi_2        =  0,
     \quad
    \pi_3        =  x_3 x_1^{(2)} \xi^3 \xi_{(2)}^1 - x_2 x_1^{(2)} \xi^2 \xi_{(2)}^1,
     \\
    \pi_4        &=  0,\quad
    \pi_5        =   - x_2 x_1^{(4)} \xi^2 \xi_{(4)}^1 - x_2 x_2^{(4)} \xi^2 \xi_{(4)}^2 + x_3 x_1^{(4)} \xi^3 \xi_{(4)}^1 + x_3 x_2^{(4)} \xi^3 \xi_{(4)}^2, \quad
    \pi_6        =  2 x_2 x_1^{(5)} \xi^2 \xi_{(5)}^1 - 2 x_3 x_1^{(5)} \xi^3 \xi_{(5)}^1,
     \\
    \pi_7        &=   - x_2 x_1^{(6)} \xi^2 \xi_{(6)}^1 - x_2 x_2^{(6)} \xi^2 \xi_{(6)}^2 - x_2 x_3^{(6)} \xi^2 \xi_{(6)}^3 - x_2 x_4^{(6)} \xi^2 \xi_{(6)}^4 + x_3 x_1^{(6)} \xi^3 \xi_{(6)}^1 + x_3 x_2^{(6)} \xi^3 \xi_{(6)}^2
    \\&\quad
    + x_3 x_3^{(6)} \xi^3 \xi_{(6)}^3 + x_3 x_4^{(6)} \xi^3 \xi_{(6)}^4,
     \\
    \pi_8        &=  2 x_2 x_1^{(7)} \xi^2 \xi_{(7)}^1 + 2 x_2 x_2^{(7)} \xi^2 \xi_{(7)}^2 - 2 x_3 x_1^{(7)} \xi^3 \xi_{(7)}^1 - 2   x_3 x_2^{(7)} \xi^3 \xi_{(7)}^2,
     \\
    \pi_9        &=   - x_2 x_1^{(8)} \xi^2 \xi_{(8)}^1 - x_2 x_2^{(8)} \xi^2 \xi_{(8)}^2 - x_2 x_3^{(8)} \xi^2 \xi_{(8)}^3 - x_2 x_4^{(8)} \xi^2 \xi_{(8)}^4 - x_2 x_5^{(8)} \xi^2 \xi_{(8)}^5 - x_2 x_6^{(8)} \xi^2 \xi_{(8)}^6
    \\&\quad
    - x_2 x_7^{(8)} \xi^2 \xi_{(8)}^7 - x_2 x_8^{(8)} \xi^2 \xi_{(8)}^8 + x_3 x_1^{(8)} \xi^3 \xi_{(8)}^1 + x_3 x_2^{(8)} \xi^3 \xi_{(8)}^2 + x_3 x_3^{(8)} \xi^3 \xi_{(8)}^3 + x_3 x_4^{(8)} \xi^3 \xi_{(8)}^4
    \\&\quad
    + x_3 x_5^{(8)} \xi^3 \xi_{(8)}^5 + x_3 x_6^{(8)} \xi^3 \xi_{(8)}^6 + x_3 x_7^{(8)} \xi^3 \xi_{(8)}^7 + x_3 x_8^{(8)} \xi^3 \xi_{(8)}^8,
     \\
    \pi_{10}        &=  2 x_2 x_1^{(9)} \xi^2 \xi_{(9)}^1 + 2 x_2 x_2^{(9)} \xi^2 \xi_{(9)}^2 + 2 x_2 x_3^{(9)} \xi^2 \xi_{(9)}^3 + 2   x_2 x_4^{(9)} \xi^2 \xi_{(9)}^4 + 2 x_2 x_5^{(9)} \xi^2 \xi_{(9)}^5
    \\&\quad
    + 2 x_2 x_6^{(9)} \xi^2 \xi_{(9)}^6 + 2   x_2 x_7^{(9)} \xi^2 \xi_{(9)}^7 - 2 x_3 x_1^{(9)} \xi^3 \xi_{(9)}^1 - 2 x_3 x_2^{(9)} \xi^3 \xi_{(9)}^2 - 2   x_3 x_3^{(9)} \xi^3 \xi_{(9)}^3 - 2 x_3 x_4^{(9)} \xi^3 \xi_{(9)}^4
    \\&\quad
    - 2 x_3 x_5^{(9)} \xi^3 \xi_{(9)}^5 - 2   x_3 x_6^{(9)} \xi^3 \xi_{(9)}^6 - 2 x_3 x_7^{(9)} \xi^3 \xi_{(9)}^7,
     \\
    \pi_{11}        &=   - 3 x_2 x_1^{(10)} \xi^2 \xi_{(10)}^1 - 3 x_2 x_2^{(10)} \xi^2 \xi_{(10)}^2 - x_2 x_3^{(10)} \xi^2 \xi_{(10)}^3 - x_2 x_4^{(10)} \xi^2 \xi_{(10)}^4 - x_2 x_5^{(10)} \xi^2 \xi_{(10)}^5
    \\&\quad
    - x_2 x_6^{(10)} \xi^2 \xi_{(10)}^6 - x_2 x_7^{(10)} \xi^2 \xi_{(10)}^7 - x_2 x_8^{(10)} \xi^2 \xi_{(10)}^8 - x_2 x_9^{(10)} \xi^2 \xi_{(10)}^9 - x_2 x_{10}^{(10)} \xi^2 \xi_{(10)}^{10} - x_2 x_{11}^{(10)} \xi^2 \xi_{(10)}^{11}
    \\&\quad
    - x_2 x_{12}^{(10)} \xi^2 \xi_{(10)}^{12} - x_2 x_{13}^{(10)} \xi^2 \xi_{(10)}^{13} - x_2 x_{14}^{(10)} \xi^2 \xi_{(10)}^{14} - x_2 x_{15}^{(10)} \xi^2 \xi_{(10)}^{15} - x_2 x_{16}^{(10)} \xi^2 \xi_{(10)}^{16}
    \\&\quad
    - x_2 x_{17}^{(10)} \xi^2 \xi_{(10)}^{17} - x_2 x_{18}^{(10)} \xi^2 \xi_{(10)}^{18} + 3   x_3 x_1^{(10)} \xi^3 \xi_{(10)}^1 + 3 x_3 x_2^{(10)} \xi^3 \xi_{(10)}^2 + x_3 x_3^{(10)} \xi^3 \xi_{(10)}^3
    \\&\quad
    + x_3 x_4^{(10)} \xi^3 \xi_{(10)}^4 + x_3 x_5^{(10)} \xi^3 \xi_{(10)}^5 + x_3 x_6^{(10)} \xi^3 \xi_{(10)}^6 + x_3 x_7^{(10)} \xi^3 \xi_{(10)}^7 + x_3 x_8^{(10)} \xi^3 \xi_{(10)}^8 + x_3 x_9^{(10)} \xi^3 \xi_{(10)}^9
    \\&\quad
    + x_3 x_{10}^{(10)} \xi^3 \xi_{(10)}^{10} + x_3 x_{11}^{(10)} \xi^3 \xi_{(10)}^{11} + x_3 x_{12}^{(10)} \xi^3 \xi_{(10)}^{12} + x_3 x_{13}^{(10)} \xi^3 \xi_{(10)}^{13} + x_3 x_{14}^{(10)} \xi^3 \xi_{(10)}^{14}
    \\&\quad
    + x_3 x_{15}^{(10)} \xi^3 \xi_{(10)}^{15} + x_3 x_{16}^{(10)} \xi^3 \xi_{(10)}^{16} + x_3 x_{17}^{(10)} \xi^3 \xi_{(10)}^{17} + x_3 x_{18}^{(10)} \xi^3 \xi_{(10)}^{18},
     \\
    \pi_{12}        &=  2 x_2 x_1^{(11)} \xi^2 \xi_{(11)}^1 + 2 x_2 x_2^{(11)} \xi^2 \xi_{(11)}^2 + 2 x_2 x_3^{(11)} \xi^2 \xi_{(11)}^3 + 2   x_2 x_4^{(11)} \xi^2 \xi_{(11)}^4 + 2 x_2 x_5^{(11)} \xi^2 \xi_{(11)}^5
    \\&\quad
    + 2 x_2 x_6^{(11)} \xi^2 \xi_{(11)}^6 + 2   x_2 x_7^{(11)} \xi^2 \xi_{(11)}^7 + 2 x_2 x_8^{(11)} \xi^2 \xi_{(11)}^8 + 2 x_2 x_9^{(11)} \xi^2 \xi_{(11)}^9 + 2   x_2 x_{10}^{(11)} \xi^2 \xi_{(11)}^{10}
    \\&\quad
    + 2 x_2 x_{11}^{(11)} \xi^2 \xi_{(11)}^{11} + 2 x_2 x_{12}^{(11)} \xi^2 \xi_{(11)}^{12} + 2   x_2 x_{13}^{(11)} \xi^2 \xi_{(11)}^{13} + 2 x_2 x_{14}^{(11)} \xi^2 \xi_{(11)}^{14} + 2 x_2 x_{15}^{(11)} \xi^2 \xi_{(11)}^{15}
    \\&\quad
    + 2   x_2 x_{16}^{(11)} \xi^2 \xi_{(11)}^{16} - 2 x_3 x_1^{(11)} \xi^3 \xi_{(11)}^1 - 2 x_3 x_2^{(11)} \xi^3 \xi_{(11)}^2 - 2   x_3 x_3^{(11)} \xi^3 \xi_{(11)}^3 - 2 x_3 x_4^{(11)} \xi^3 \xi_{(11)}^4
    \\&\quad
    - 2 x_3 x_5^{(11)} \xi^3 \xi_{(11)}^5 - 2   x_3 x_6^{(11)} \xi^3 \xi_{(11)}^6 - 2 x_3 x_7^{(11)} \xi^3 \xi_{(11)}^7 - 2 x_3 x_8^{(11)} \xi^3 \xi_{(11)}^8 - 2   x_3 x_9^{(11)} \xi^3 \xi_{(11)}^9 - 2 x_3 x_{10}^{(11)} \xi^3 \xi_{(11)}^{10}
    \\&\quad
    - 2 x_3 x_{11}^{(11)} \xi^3 \xi_{(11)}^{11} - 2   x_3 x_{12}^{(11)} \xi^3 \xi_{(11)}^{12} - 2 x_3 x_{13}^{(11)} \xi^3 \xi_{(11)}^{13} - 2 x_3 x_{14}^{(11)} \xi^3 \xi_{(11)}^{14} - 2   x_3 x_{15}^{(11)} \xi^3 \xi_{(11)}^{15}
    \\&\quad
    - 2 x_3 x_{16}^{(11)} \xi^3 \xi_{(11)}^{16}.
\end{align*}
\endgroup
In this example the $\pi_m$'s appear to have a diagonal shape and are remarkably simple in comparison to the coefficients of $\pi_0$. We observe that $\llbracket \pi_i,\pi_j\rrbracket=0$ for $1\le i,j \le 12$. Even though $S/I$ is not Koszul the $\pi_m$ are homogeneous of $x$-degree $2$ for $1\le m\le 12$.  Note the $\pi_0^{\le 11}$ is not of homogeneous $x$-degree.

\subsection{A summary of the data for more contrived examples}\label{subsec:contrived}

For more interesting examples the calculations quickly become overwhelming, and for lack of space we cannot present the details of our findings here; they are available by request from the authors. Let us just record how far could we get with the examples of Section \ref{sec:affine}.

\begin{align*}
\begin{tabular}{c||c|c|c}
 Example   & $\dim S$ & $|\mathcal I_m|$, $m=1,2,\dots$ & higher brack. computed\\\hline\hline
 $(-1,1,1)$-circle quotient (see Subsec. \ref{subsubsec:-1,1,1}) & $8$& $9,16,45,\dots$ & $\pi_1$, $\pi_2$, $\pi_3$, $\pi_4$\\\hline
  $2$ particles with zero ang. momentum (see Subsec. \ref{subsubsec:angmom}) & $10$ & $11,10,10,\dots$ &  $\pi_1$, $\pi_2$, $\pi_3^{02}$, $\pi_3^{011}$\\\hline
   deg. $2$ harm. polynomials of $3$ var. (see Subsec. \ref{subsubsec:harmonic}) & $3$ & $5,5,10,24,55,\dots$ &  $\pi_1$, $\pi_2$, $\pi_3$, $\pi_4$, $\pi_5$\\\hline
 $2\times 2$-minors of a $3\times 3$-matrix (see Subsec. \ref{subsubsec:detideal})   & $9$& $9,16,45,\dots$   & $\pi_1$, $\pi_2$, $\pi_3$, $\pi_4$
\end{tabular}
\end{align*}
The example of two particles in dimension three (see Subsec. \ref{subsubsec:angmom}) is not Koszul and $\pi_2$ and $\pi_3$ are not of homogeneous $x$-degree.

\appendix
\section{Poisson cohomology}\label{ap:Poissoncohomology}
Let $S=\bs k[x_1,x_2,\dots, x_n]$ be a polynomial algebra and $\{\:,\:\}$ be a Poisson bracket on $S$. The $S$-module of K\"ahler differentials $\Omega_{S|\bs k}$ is a free module dual to the $S$-module of derivations $D_S:=\{X:S\to S\mid X(fg)=f\;X(g)+g\; X(f)\}$. This extends to an isomorphism $\wedge^m D_S\cong\operatorname{Alt}^m_S(\Omega_{S|\bs k},S)=:\mathrm C^m_{\operatorname{Poiss}}(S)$ between the $m$th exterior power $D_S$ (understood to be $S$-linear) and the alternating $m$-multilinear maps from $\Omega_{S|\bs k}$ to $S$. The anchor map $\rho:\Omega_{S|\bs k}\to D_S$ sends $\mathrm df$ to the derivation $g\mapsto \{f,g\}$. The codifferential of Poisson cohomology $\dP$ is defined by the following formula for $X\in\mathrm C^m_{\operatorname{Poiss}}(S)$:
\begin{align*}
&(\dP X)(\alpha_0,\alpha_1,\dots,\alpha_m)=\sum_{j=0}^m (-1)^j \rho(\alpha_j)\left(X(\alpha_0,\dots,\widehat{\alpha_j},\dots,\alpha_m)\right)\\
&\qquad\qquad\qquad\qquad\qquad\qquad +\sum_{0\le k<\ell\le m}(-1)^{k+\ell} X([\alpha_k,\alpha_\ell],\alpha_0,\dots,\widehat{\alpha_k},\dots,\widehat{\alpha_\ell},\dots,\alpha_m),
\end{align*}
where $\alpha_0,\alpha_1,\dots,\alpha_m\in \Omega_{S|\bs k}$. In the above the hatted terms are understood to be omitted and the $[\:,\:]$ is the Koszul bracket, cf. \eqref{eq:Koszulbr}.
In fact $(\mathrm C^\bullet_{\operatorname{Poiss}}(S),\dP)$ forms a supercommutative dg algebra. Note that $\dP$ is not $S$-linear. If $S=\bs k[x_1,x_2,\dots, x_n]$ carries a (possibly nonstandard) $\Z_{\ge 0}$-grading $\deg$ preserved by the bracket it follows that $\deg(\dP)=\deg(\{\:,\:\})$.

\bibliographystyle{amsplain}
\bibliography{higherKoszul}

\providecommand{\bysame}{\leavevmode\hbox to3em{\hrulefill}\thinspace}
\providecommand{\MR}{\relax\ifhmode\unskip\space\fi MR }
% \MRhref is called by the amsart/book/proc definition of \MR.
\providecommand{\MRhref}[2]{%
  \href{http://www.ams.org/mathscinet-getitem?mr=#1}{#2}
}
\providecommand{\href}[2]{#2}
\begin{thebibliography}{10}

\bibitem{Alev}
J.~Alev and T.~Lambre, \emph{Comparaison de l'homologie de {H}ochschild et de
  l'homologie de {P}oisson pour une d\'{e}formation des surfaces de {K}lein},
  Algebra and operator theory ({T}ashkent, 1997), Kluwer Acad. Publ.,
  Dordrecht, 1998, pp.~25--38.

\bibitem{AvramovInfFree}
Luchezar~L. Avramov, \emph{Infinite free resolutions}, Six lectures on
  commutative algebra ({B}ellaterra, 1996), Progr. Math., vol. 166,
  Birkh\"{a}user, Basel, 1998, pp.~1--118.

\bibitem{ACI}
Luchezar~L. Avramov, Aldo Conca, and Srikanth~B. Iyengar, \emph{Subadditivity
  of syzygies of {K}oszul algebras}, Math. Ann. \textbf{361} (2015), no.~1-2,
  511--534.

\bibitem{AvramovHerzog}
Luchezar~L. Avramov and J\"{u}rgen Herzog, \emph{Jacobian criteria for complete
  intersections. {T}he graded case}, Invent. Math. \textbf{117} (1994), no.~1,
  75--88.

\bibitem{Bursztyn}
Henrique Bursztyn, \emph{Poisson vector bundles, contravariant connections and
  deformations}, no. 144, 2001, Noncommutative geometry and string theory
  (Yokohama, 2001), pp.~26--37.

\bibitem{CHS}
Joshua Cape, Hans-Christian Herbig, and Christopher Seaton, \emph{Symplectic
  reduction at zero angular momentum}, J. Geom. Mech. \textbf{8} (2016), no.~1,
  13--34.

\bibitem{CF}
Alberto Cattaneo and Giovanni Felder, \emph{Relative formality theorem and
  quantisation of coisotropic submanifolds}, Advances in Mathematics
  \textbf{208} (2007), 521--548.

\bibitem{Collingwood}
D.H. Collingwood and W.M. McGovern, \emph{Nilpotent orbits in semisimple {L}ie
  algebra: An introduction}, Mathematics series, Taylor \& Francis, 1993.

\bibitem{Damianou}
Pantelis~A. Damianou, \emph{The {V}olterra model and its relation to the {T}oda
  lattice}, Physics Letters A \textbf{155}, 126--132.

\bibitem{Dolgachev}
Igor Dolgachev, \emph{Mc{K}ay correspondence}, Lecture notes, 2009.

\bibitem{faddeev2007hamiltonian}
L.~Faddeev, A.G. Reyman, and L.~Takhtajan, \emph{{H}amiltonian {M}ethods in the
  {T}heory of {S}olitons}, Classics in Mathematics, Springer Berlin Heidelberg,
  2007.

\bibitem{FHSSigma}
Carla Farsi, Hans-Christian Herbig, and Christopher Seaton, \emph{On orbifold
  criteria for symplectic toric quotients}, SIGMA Symmetry Integrability Geom.
  Methods Appl. \textbf{9} (2013), Paper 032, 33.

\bibitem{Fedosov}
Boris Fedosov, \emph{Deformation quantization and index theory}, Mathematical
  Topics, vol.~9, Akademie Verlag, Berlin, 1996.

\bibitem{Filippov}
V.~T. Filippov, \emph{{$n$}-{L}ie algebras}, Sibirsk. Mat. Zh. \textbf{26}
  (1985), no.~6, 126--140, 191.

\bibitem{FresseCI}
Benoit Fresse, \emph{Structures de {P}oisson sur une intersection compl\`ete
  \`a singularit\'{e}s isol\'{e}es}, C. R. Math. Acad. Sci. Paris \textbf{335}
  (2002), no.~1, 5--10.

\bibitem{Froeberg}
Ralf Fr\"{o}berg, \emph{A study of graded extremal rings and of monomial
  rings}, Math. Scand. \textbf{51} (1982), no.~1, 22--34.

\bibitem{M2}
Daniel~R. Grayson and Michael~E. Stillman, \emph{Macaulay2, a software system
  for research in algebraic geometry}, Available at
  \texttt{http://www.math.uiuc.edu/Macaulay2/}, 2012.

\bibitem{HSSadv}
Hans-Christian Herbig, Gerald~W. Schwarz, and Christopher Seaton, \emph{When is
  a symplectic quotient an orbifold?}, Adv. Math. \textbf{280} (2015),
  208--224.

\bibitem{HSSCompositio}
\bysame, \emph{Symplectic quotients have symplectic singularities}, Compos.
  Math. \textbf{156} (2020), no.~3, 613--646.

\bibitem{Huebschmann}
Johannes Huebschmann, \emph{Poisson cohomology and quantization}, J. Reine
  Angew. Math. \textbf{408} (1990), 57--113.

\bibitem{Mathematica}
Wolfram~Research{,} Inc., \emph{Mathematica, {V}ersion 12.3}, Champaign, IL,
  2021.

\bibitem{Kjeseth}
Lars Kjeseth, \emph{A homotopy {L}ie-{R}inehart resolution and classical {BRST}
  cohomology}, Homology Homotopy Appl. \textbf{3} (2001), no.~1, 165--192.

\bibitem{klein1993felix}
Felix Klein, \emph{Vorlesungen \"{u}ber das {I}kosaeder und die {A}ufl\"{o}sung
  der {G}leichungen vom f\"{u}nften {G}rade}, Birkh\"{a}user Verlag, Basel; B.
  G. Teubner, Stuttgart, 1993, Reprint of the 1884 original, Edited, with an
  introduction and commentary by Peter Slodowy.

\bibitem{Kunz}
E.~Kunz and R.~Waldi, \emph{On {K}\"{a}hler's integral differential forms of
  arithmetic function fields}, Abh. Math. Sem. Univ. Hamburg \textbf{73}
  (2003), 297--310.

\bibitem{Lada}
Tom Lada, \emph{{$L_\infty$} algebra representations}, vol.~12, 2004, Homotopy
  theory, pp.~29--34.

\bibitem{LadaMarkl}
Tom Lada and Martin Markl, \emph{Strongly homotopy {L}ie algebras}, Comm.
  Algebra \textbf{23} (1995), no.~6, 2147--2161.

\bibitem{LGPV}
C.~Laurent-Gengoux, A.~Pichereau, and P.~Vanhaecke, \emph{Poisson structures},
  Grundlehren der mathematischen Wissenschaften, Springer Berlin Heidelberg,
  2012.

\bibitem{Strobl}
Camille Laurent-Gengoux, Sylvain Lavau, and Thomas Strobl, \emph{The universal
  {L}ie {$\infty$}-algebroid of a singular foliation}, Doc. Math. \textbf{25}
  (2020), 1571--1652.

\bibitem{Manetti}
Marco Manetti, \emph{The cotangent complex in characteristic 0}, Lecture notes,
  2012.

\bibitem{Matsumura}
H.~Matsumura, M.~Reid, and B.~Bollobas, \emph{Commutative ring theory},
  Cambridge Studies in Advanced Mathematics, Cambridge University Press, 1989.

\bibitem{dgAlgsM2}
Frank Moore, \emph{Dgalgebras: A package for macaulay2 to perform computations
  using dg algebras}, Available at
  \texttt{http://www.math.uiuc.edu/Macaulay2/}, 2011.

\bibitem{ONAsh}
Oliver Nash, \emph{On {K}lein's icosahedral solution of the quintic}, Expo.
  Math. \textbf{32} (2014), no.~2, 99--120.

\bibitem{PalamodovInfDef}
Victor~P. Palamodov, \emph{Infinitesimal deformation quantization of complex
  analytic spaces}, Lett. Math. Phys. \textbf{79} (2007), no.~2, 131--142.

\bibitem{polishchuk2005quadratic}
A.~Polishchuk, L.~Positselski, and American~Mathematical Society,
  \emph{Quadratic algebras}, University lecture series, American Mathematical
  Society, 2005.

\bibitem{Reinhold}
Ben Reinhold, \emph{{$L_\infty$}-algebras and their cohomology}, Emergent
  Scientist \textbf{3} (2019), no.~4, 17 pages.

\bibitem{Rinehart}
George~S. Rinehart, \emph{Differential forms on general commutative algebras},
  Trans. Amer. Math. Soc. \textbf{108} (1963), 195--222.

\bibitem{Stasheff}
Jim Stasheff, \emph{Constrained {P}oisson algebras and strong homotopy
  representations}, Bull. Amer. Math. Soc. (N.S.) \textbf{19} (1988), no.~1,
  287--290.

\bibitem{Vitagliano}
Luca Vitagliano, \emph{Representations of homotopy {L}ie-{R}inehart algebras},
  Math. Proc. Cambridge Philos. Soc. \textbf{158} (2015), no.~1, 155--191.

\bibitem{VoronovHigherDerived}
Theodore Voronov, \emph{Higher derived brackets and homotopy algebras}, J. Pure
  Appl. Algebra \textbf{202} (2005), no.~1-3, 133--153.

\bibitem{PingXu}
Ping Xu, \emph{Gerstenhaber algebras and {BV}-algebras in {P}oisson geometry},
  Comm. Math. Phys. \textbf{200} (1999), no.~3, 545--560.

\end{thebibliography}

\end{document}